\newtheorem{thm}{Theorem}
\newtheorem{conj}{Conjecture}
\newtheorem{lemma}{Lemma}[section]
\newtheorem{claim}{Claim}[section]
\newenvironment{subproof}{\noindent \emph{Subproof.}}{\hfill $\Box $}
\newenvironment{kst}
{\setlength{\leftmargini}{2\parindent}
 \begin{itemize}
 \setlength{\itemsep}{-1.1mm}}
{\end{itemize}}
\begin{document}

     %\pagecolor{cyan}

\pagestyle{plain}
\setlength{\baselineskip}{15pt}
\title{Bipartite graphs with minimum degree at least 15 are antimagic}

\author{Kecai Deng}
\address{
School of Mathematical Sciences, Huaqiao University, \\
              Quanzhou 362000, Fujian, P.R. China
}

\email{\qquad kecaideng@126.com}
\thanks{Kecai Deng is the corresponding author. \\
\hspace*{4.3mm}Kecai Deng is partially supported by NSFC (No. 11701195),   Fundamental Research Funds for the Central
Universities (No. ZQN-904) and Fujian Province University Key Laboratory of Computational Science,
School of Mathematical Sciences, Huaqiao University, Quanzhou, China (No. 2019H0015).}

\makeatletter
\@namedef{subjclassname@2020}{\textup{2020} Mathematics Subject Classification}
\makeatother
\subjclass[2020]{05C15}
\keywords{antimagic labeling;   biregular; bipartite; matching}

\begin{abstract}  An antimagic     {labeling}  of a graph $G=(V,E)$   is a one-to-one mapping  $f: E\rightarrow\{1,2,\ldots,|E|\}$, ensuring that the vertex sums in $V$ are pairwise distinct, where a vertex sum of a vertex $v$  is   defined as the sum of the labels of the edges  incident to $v$.  A graph    is called antimagic if it admits an antimagic labeling.
The Antimagic Labeling  Conjecture, proposed by Hartsfield and Ringel in 1990, posits   that  every connected graph other than $K_2$  is antimagic.    The   conjecture was confirmed for graphs of average degree at least 4,182 in 2016 by Eccles, where it was stated that  a similar approach could not reduce the bound below 1,000 from   4,182.

  This paper  shows  that every bipartite graph with minimum degree at least 15 is antimagic. Our approach relies on three   tools:     a consequence of K\"{o}nig's Theorem, the existence of a subgraph of a specific size that avoids Eulerian components, and   a labeling lemma that ensures some vertex sums are divisible by three while others are not.
\end{abstract}

\maketitle

\section{Introduction}
All graphs considered in this paper are simple, undirected, and finite. We follow \cite{Diestel} for the notations and terminology not
defined here.  Let $G=(V,E)$   be a   graph  of size $m$, and let $L$ be an integer set of the same size.   A one-to-one mapping  $f: E\rightarrow L$  is called  a \emph{labeling} on  $G$ with $L$. The \emph{vertex sum} $\sigma_f(v)$ (or $\sigma(v)$ if $f$ is clear)  of a vertex  $v$  is then defined as the   sum of the labels of      the edges incident to $v$.  If $L=\{1,2,\ldots,m\}$ and all vertex sums in $V$ under $f$  are pairwise distinct, then $f$ is called an \emph{antimagic labeling} of $G$. We call $G$ \emph{antimagic} if it permits an antimagic labeling. This concept was introduced by Hartsfield and Ringel in \cite{ref1}, 1990. Their conjecture, stated as follows, has attracted considerable interest in the field:

\begin{conj}{\rm(Antimagic Labeling Conjecture \cite{ref1}\rm)}
	Every connected graph other than $K_{2}$ is antimagic.	
\end{conj}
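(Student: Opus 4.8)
The plan is to attack the conjecture by splitting connected graphs according to their average degree and handling the dense and sparse regimes by different means, since no single technique is currently known to cover all connected graphs simultaneously. I would first dispose of the \emph{dense} case following Eccles: a connected graph whose average degree exceeds an absolute constant is antimagic, via an edge decomposition together with a probabilistic absorption argument that places the large labels so as to smooth the vertex sums into distinct values. The aim of the sparse analysis is then to lower the density threshold reachable by structural methods until it meets Eccles's threshold from below, so that the two regimes jointly exhaust every connected graph other than $K_2$.

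For the \emph{sparse} regime I would build on the three tools announced in the abstract, which already settle bipartite graphs of minimum degree at least $15$. First, the consequence of K\"onig's Theorem lets one extract a spanning subgraph with prescribed degree constraints on one side of a bipartition, creating enough freedom to distribute labels. Second, one selects a subgraph of prescribed size containing no Eulerian component; this matters because in an Eulerian component every degree is even, which kills the modular freedom, whereas a vertex of odd degree allows a single large label to be toggled so as to shift that vertex sum between residue classes. Third, the labeling lemma forces a controlled set of vertex sums to be divisible by three and the rest not, so that global distinctness reduces to distinctness \emph{within} each residue class modulo $3$, where the leftover labels can then be assigned greedily.

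To remove the bipartiteness and the minimum-degree hypothesis I would proceed in two stages. For non-bipartite graphs I would replace K\"onig's Theorem by a Gallai--Edmonds decomposition, locating a large matching and a controlled factor-critical part, and then run the parity and divisibility argument on the near-bipartite skeleton that survives after deleting a bounded number of odd components. For small minimum degree I would strip the vertices of degree $1$ and $2$ and assign them the most extreme labels, so their vertex sums are automatically separated from those of the bulk, reducing to a graph of larger minimum degree; the residual genuinely sparse or tree-like pieces would be covered by the known antimagicness of trees with at most one vertex of degree $2$.

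The hard part, and the reason the conjecture is still open, is the \emph{intermediate} regime of connected graphs whose average degree is a large constant but below Eccles's threshold: these are at once too sparse for probabilistic smoothing to govern every vertex sum and, once bipartiteness and regularity are dropped, too unstructured for a clean matching skeleton to be extracted. The central obstacle is making the three tools cooperate there --- in particular, choosing the Eulerian-free subgraph and the divisibility-by-three labeling simultaneously on a non-bipartite graph without letting the two residue classes collide --- and closing the gap between the threshold attainable this way and Eccles's bound is exactly what a full proof would demand.
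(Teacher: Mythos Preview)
The statement you are attempting to prove is the Antimagic Labeling \emph{Conjecture}; the paper does not prove it and explicitly says ``The conjecture remains open.'' There is therefore no paper proof to compare against --- the paper's contribution is Theorem~\ref{thm1}, the special case of bipartite graphs with $\delta\ge 15$.

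What you have written is not a proof but a research programme, and several of its steps would fail as stated. First, your sparse-regime reduction ``strip the vertices of degree $1$ and $2$ \ldots\ the residual \ldots\ tree-like pieces would be covered by the known antimagicness of trees with at most one vertex of degree $2$'' does not close: the conjecture is open already for general trees (and forests), so you cannot appeal to a known result there; moreover, deleting low-degree vertices from a connected graph can disconnect it, and antimagicness is not known to behave well under such decompositions. Second, the Gallai--Edmonds step is purely heuristic: factor-critical parts carry no analogue of the K\"onig matching saturating one side, and the paper's Lemma~\ref{lem1} is used in an essential way that has no obvious non-bipartite substitute. Third, you yourself identify the intermediate-density regime as ``the reason the conjecture is still open'' and offer no mechanism for it; acknowledging a gap is not the same as bridging it.

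In short, the proposal correctly summarises the paper's three tools for the bipartite $\delta\ge 15$ case, but the extensions to non-bipartite graphs, to small minimum degree, and to the range between $15$ and Eccles's constant are all genuine open problems, not steps one can ``proceed'' through.
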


	 The conjecture remains open, though many families of graphs were shown antimagic. In 2004, Alon    et al. \cite{ref2} showed that    there exists a constant $c$ such that each graph $G$ of order $n$ with   minimum degree at least $c \log n$ is antimagic; they also showed that     each graph $G$ of order $n$  with   maximum degree   at least $n-2$ is antimagic. The first result of Alon    et al.   was improved by  Eccles \cite{Eccles2016} in 2016, where    graphs with average degree
at least  4,182  were shown  antimagic. As noted in \cite{Eccles2016}, it is unlikely that a similar approach could reduce the bound below 1,000 from    4,182. Yilma \cite{ref3} further refined the second result of Alon et al.,  by showing that each graph $G$ of order $n$ with   maximum degree   at least $n-3$ is  antimagic. Other  known families of antimagic graphs   include    regular graphs   \cite{ref17,ref16,ref15}, some biregular bipartite graphs \cite{Deng2022,Yu2023},    each graph    of order $p^k$ ($p$ is odd prime and $k$ is a positive integer) that includes  a $C_p$-factor \cite{Hefetz2005,Hefetz2010}, each forest that includes at most one vertex of degree two \cite{ref6,ref7,Sierra2023}, unions  of graphs with many three-paths \cite{Chavez2023},   	caterpillars \cite{ref4,ref5}, subdivided caterpillars \cite{WDZ},  and so on. For further information on the topic of antimagic labeling of graphs, one is referred to the comprehensive dynamic survey provided in  \cite{ref18}.

This paper demonstrates that every bipartite graph with   minimum degree of at least 15 is antimagic.

  \begin{thm}  \label{thm1} Every bipartite graph with minimum degree at least 15 is antimagic.\end{thm}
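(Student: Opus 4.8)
The plan is to exploit the bipartition of $G=(A\cup B,E)$ through congruences modulo $3$. Suppose we can produce a labeling $f\colon E\to\{1,\dots,m\}$ (with $m=|E|$) for which $\sigma_f(a)\equiv 0\pmod 3$ for every $a\in A$ while $\sigma_f(b)\not\equiv 0\pmod 3$ for every $b\in B$; then no vertex sum on the $A$-side can coincide with one on the $B$-side, and it suffices to separate the sums \emph{within} $A$ and \emph{within} $B$. This is precisely the kind of conclusion the labeling lemma of the abstract is built to deliver, so the task splits into: (i) using a bulk of the labels to push the vertices of $G$ into coarse ``buckets'' of vertex sums that are pairwise far apart; and (ii) using the remaining labels, together with the modulo-$3$ lemma, both to break ties inside each bucket and to install the residues $\sigma(a)\equiv 0$, $\sigma(b)\not\equiv 0$. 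Before any of this I would reduce to $G$ connected: the components are themselves bipartite of minimum degree $\ge 15$, and assigning them consecutive blocks of $\{1,\dots,m\}$ in decreasing order of maximum degree keeps the sums of different components in disjoint ranges (a short estimate using $\delta\ge 15$).

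For (i) I would call on the consequence of K\"{o}nig's theorem. Since $G$ is bipartite it has a proper edge colouring with $\Delta(G)$ colours; grouping the colour classes suitably yields a spanning subgraph $H$ and its complement $\overline H=G-E(H)$ with every vertex carrying roughly half of its degree in each, so $\delta(H),\delta(\overline H)\ge 7$ (as $15=7+8$). Place the largest $|E(H)|$ labels on $H$ and the smallest $|E(\overline H)|$ labels on $\overline H$. The heavy labels on $H$ do the coarse sorting: each vertex collects at least $7$ large distinct labels there, and by deciding vertex by vertex which heavy labels it gets one can spread the resulting $H$-sums into buckets whose separation exceeds the total weight of all the light labels, so that the light labels cannot move a vertex out of its bucket. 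Distinctness of sums lying in different buckets is then automatic.

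For (ii), inside a single bucket, I would use the remaining two tools. First extract a subgraph $H'$ whose number of edges matches a designated sub-block of the light labels and which has \emph{no Eulerian component}; the minimum-degree hypothesis leaves ample room, and since ``Eulerian'' is destroyed by adding or deleting a single edge in a component, a greedy/exchange argument should produce such an $H'$ of exactly the prescribed size. The labeling lemma then applies to $H'$ with its label block: having no Eulerian component gives it enough slack (shifting labels around cycles changes vertex-sum residues in a controllable way) to prescribe, vertex by vertex, whether $\sigma(v)$ is divisible by $3$, while the small numerical span of this block keeps every vertex in its bucket. Taking the ``divisible by $3$'' set to be $A$ separates the two sides; the buckets from step (i) separate within each side; together these give pairwise distinct vertex sums, i.e. an antimagic labeling of $G$.

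The main obstacle is the quantitative bookkeeping that makes steps (i) and (ii) coexist: one must keep the bucket widths, the size of $H$ versus $\overline H$, the number of edges of $H'$, and the span of its label block in the right relations so that the heavy-label sort, the light-label tie-breaking, and the modulo-$3$ correction never interfere — and it is this balancing that fixes the constant $15$. A delicate point inside it is that high-degree vertices collect many light labels, so the buckets must be wide enough to absorb that perturbation while the heavy labels still resolve the vertices finely enough; getting both at once is the crux. Subsidiary difficulties are proving the ``no Eulerian component, prescribed size'' subgraph always exists in an otherwise uncontrolled host, and establishing the modulo-$3$ labeling lemma itself (an argument about parities/residues of vertex sums along cycles), which is presumably the technical heart of the paper. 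By contrast the reduction to connected graphs and the handling of degenerate small cases should be routine.
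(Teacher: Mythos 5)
Your opening idea --- force $\sigma(v)\equiv 0\pmod 3$ on one class and $\not\equiv 0$ on the other, then separate within each class --- is indeed the paper's strategy, but you attach it to the wrong partition and the wrong separating mechanism. The paper does not use the original bipartition $(A,B)$: it invokes the K\"{o}nig-type lemma to re-partition $A\cup B$ into an independent set $Y$ and a set $X$ that is saturated by a matching $M$ into $Y$. That matching is essential, not decorative: after all other edges are labeled, the $n_X$ remaining $0$-labels are placed on $M$ in the order of the current partial sums of $X$, which is what finally makes the $\{1,2\}$-sums on $X$ pairwise distinct; and the $n_Y$ consecutive $0$-labels on $M\cup E_1$ (one edge per vertex of $Y$, since $Y$ is independent) are what separate the $0$-sums on $Y$. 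Your proposal has no analogous device --- you never say how the sums inside $A$ or inside a residue class actually become distinct beyond ``break ties with light labels,'' and your appeal to K\"{o}nig via a proper $\Delta$-edge-colouring to split every degree in half is unjustified (a vertex of degree $15$ may see all its colours inside one of the two groups of colour classes).

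The more serious gap is quantitative: the heavy/light bucket scheme in your step (i) cannot work at this degree threshold. A vertex's ``heavy'' sum from $H$ is at most about $\Delta\cdot m$, while the perturbation a single vertex can receive from the light labels is of order $\delta\cdot m$ at least; to keep every vertex in its own bucket you would need on the order of $|V|$ buckets, each wider than that perturbation, inside a range of width at most $\Delta m$ --- forcing $|V|\lesssim \Delta/\delta$, which fails for essentially every graph. This is precisely the style of argument (Alon et al., Eccles) that the paper's introduction says cannot be pushed below average degree about $1{,}000$. The paper avoids it entirely: labels $\equiv 1,2 \pmod 3$ are paired into constant-sum pairs placed on ``$X$-links'' (two-edge paths with both ends in $X$ and centre in $Y$), so that vertices of $Y$ with the same degree receive \emph{equal} partial sums, and an explicit estimate shows distinct degree classes differ by more than $3n_Y$, the span of the final block of $0$-labels. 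Your step (ii) --- an exchange argument producing a prescribed-size subgraph with no Eulerian component, feeding the mod-$3$ labeling lemma --- does correspond to Claim 3.1 and Lemma 2.5 of the paper, but without the K\"{o}nig partition, the matching, and the constant-sum pairing, the two separation steps you need cannot both be carried out.
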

Our approach  relies on three tools:     a consequence (Lemma \ref{lem1} in Section 2) of K\"{o}nig's Theorem, the existence of a subgraph of a specific size that avoids Eulerian components (Claim \ref{claim2} in Section 3), and   a labeling lemma (Lemma \ref{lem6}  in Section 2) that ensures some vertex sums are divisible by three while others are not.

The remainder of this paper is organized as follows. Section 2 presents all necessary lemmas for proving Theorem \ref{thm1}, among which Lemma \ref{lem6} is particularly noteworthy for its potential applications beyond the scope of this work.  In Section 3, we develop a constructive framework for a given bipartite graph $G$ with minimum degree $\delta(G)\geq15$, establishing (1) a vertex partition, (2) an edge decomposition, and (3) an integer partition of $\{1,2,\ldots,|E|\}$ that collectively enable the construction of an antimagic labeling for $G$.

\section{Terminology and  lemmas}
  Let   $G=(V,G)$ be a graph, $v\in V$, $V_1,V_2\subseteq V$ and $E_1\subseteq E$.   If each component of $G$ is Eulerian, then $G$ is called an \emph{even graph}. Denoted by $\delta_G(V_1)$ and  $\Delta_G(V_1)$   the minimum and maximum degree in $V_1$, respectively.  Denote the set of edges   incident to $v$ in $E_1$ by $E_1(v)$ and   let  $E_1(V_1)\triangleq\cup_{v\in V_1}E_1(v)$. We say
  $E_1$ \emph{covers} $v$ if $E_1(v)\neq \emptyset$,  and $E_1$ \emph{covers} $V_1$ if  $E_1$ covers each vertex in $V_1$.   If $E_1$ is a matching that  covers $v$ (resp. $V_1$), then we also say $E_1$ \emph{saturates} $v$ (resp. $V_1$). If $V_1\cap V_2=\emptyset$, then denote by $G[V_1,V_2]$ the bipartite graph with vertex set $V_1\cup V_2$  and edge set consisting of    all the edges between $V_1$ and $V_2$. A   \emph{$(v_1,v_{t+1})$-trail} $v_1v_2\ldots v_{t}v_{t+1}$ of $G$ is an ordered   sequence of edges  $e_1,e_2,\ldots, e_{t}$ in $G$, where $e_i=v_iv_{i+1}$ for $i\in[1,t]$  and $e_i\neq e_j$ for distinct $i,j\in [1,t]$.  A $(v_1,v_{t+1})$-trail is called   \emph{open}    if
  $v_1\neq v_{t+1}$.
  In a bipartite graph $G=(X\cup Y, E)$, an open  trail is called a \emph{$U$-trail} if both ends are in $U$ for $U=X,Y$, and  is called an \emph{$XY$-trail} if the two ends are in $X$ and $Y$, respectively. An $X$-trail with two edges is also called an     \emph{$X$-link}. See Figure \ref{fig-5} for example.

 \begin{figure}[htbp]
 \centering
 %\psfrag{a}{(a)}
 %\psfrag{b}{(b)}
 \includegraphics[height=4.6cm]{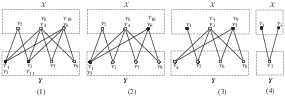}
\caption{\label{fig-5} (1) A  $Y$-trail $v_1v_2\ldots v_{10}v_{11}$;  (2) An $XY$-trail $v_1v_2\ldots v_{9}v_{10}$; (3) An  $X$-trail $v_1v_2\ldots v_{8}v_{9}$; (4) An $X$-link. Filled vertices are the ends of the trails.}
\end{figure}

  For integers $a, b$ with $a\leq b$, denote by
 $[a,b]=\{a,\ldots,b\}$ and by $[a]=[1,a]$ when $a\geq1$. We set $[a,b]=\emptyset$ if $a>b$. For a labeling $f$ on $G$, the \emph{partial vertex sum} $\sigma_f^{E_1}(v)$ denotes the sum of labels of all edges in $E_1$ that are incident to $v$. The notation $\sigma_f^{E_1}(V_1) = \{\sigma_f^{E_1}(v) \mid v \in V_1\}$ represents the multiset of partial vertex sums over $V_1$, while $\sigma_f(V_1) = \{\sigma_f(v) \mid v \in V_1\}$ gives the multiset of complete vertex sums. An integer $j$ qualifies as a \emph{$\mu$-label} precisely when it satisfies the congruence $j \equiv \mu \pmod{3}$ for $\mu \in \{0,1,2\}$. Let  $K\subseteq \{0,1,2\}$. Then each      $\mu$-label is called     a  \emph{$K$-label} for each $\mu\in K$.       A    \emph{$\mu$-sum, $\mu$-vertex sum,  $\mu$-partial vertex sum, $K$-sum, $K$-vertex sum} and  \emph{$K$-partial vertex sum}  are defined analogously.

The following two lemmas are known results: the first follows directly from K\"{o}nig's Theorem \cite{Konig} as presented in \cite{shan}, while the second appears in \cite{ref15}.

\begin{lemma} \label{lem1} \cite{shan} For any bipartite graph $G = (A \cup B, E)$, the vertex set $A \cup B$ admits a partition $X \cup Y$ where:
\begin{enumerate}
    \item $Y$ is an independent set in $G$, and
    \item the bipartite subgraph $G[X,Y]$ contains a matching $M$ saturating $X$.
\end{enumerate}\end{lemma}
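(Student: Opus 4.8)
The plan is to read the required partition straight off the min--max equality of K\"{o}nig's Theorem. First I would invoke K\"{o}nig's Theorem on the bipartite graph $G=(A\cup B,E)$: the maximum size of a matching equals the minimum size of a vertex cover. Fix a minimum vertex cover $C$ and a maximum matching $M$, so that $|M|=|C|$, and take the obvious partition $X=C$, $Y=(A\cup B)\setminus C$. It then only remains to check the two asserted properties.

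Property (1) is immediate: since $C$ is a vertex cover, every edge of $G$ meets $C=X$, so no edge lies inside $Y$, i.e. $Y$ is independent. For property (2) I would run a one-line double count on $M$. Every edge of $M$ has at least one endpoint in the cover, so $\sum_{e\in M}|e\cap C|\ge|M|$; since $M$ is a matching, each vertex of $C$ lies on at most one edge of $M$, so $\sum_{e\in M}|e\cap C|\le|C|$. As $|M|=|C|$, both inequalities are equalities: each edge of $M$ has exactly one endpoint in $C$, and each vertex of $C$ lies on exactly one edge of $M$. Consequently every edge of $M$ runs between $X=C$ and $Y=(A\cup B)\setminus C$, so $M\subseteq E(G[X,Y])$, and $M$ saturates $X$; this is exactly the matching claimed in the statement.

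The only delicate point --- the ``obstacle'', such as there is one --- is the double-counting step: the conclusion that no edge of $M$ has both endpoints in $C$, and hence that $M$ survives intact as a matching of the subgraph $G[X,Y]$, rests entirely on the equality $|M|=|C|$ supplied by K\"{o}nig's Theorem; without that equality the argument collapses. Once it is in hand, everything else is routine bookkeeping.
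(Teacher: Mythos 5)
Your proof is correct and is exactly the standard derivation the paper has in mind: the paper does not prove Lemma \ref{lem1} itself but cites it as following directly from K\"{o}nig's Theorem via \cite{shan}, and your choice $X=C$ (a minimum vertex cover), $Y=(A\cup B)\setminus C$, together with the double count forcing each edge of a maximum matching to meet $C$ exactly once and each vertex of $C$ to be saturated, is precisely that argument. Nothing is missing.
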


 \begin{lemma} \label{lem2} \cite{ref15}   Let $G=(V,E)$ be a   connected graph and   $V_{odd}$ be the set of odd-degree  vertices  in $G$. If $V_{odd}\neq \emptyset$, then $E$   decomposes into exactly
$\frac{|V_{odd}|}{2}$ open trails. \end{lemma}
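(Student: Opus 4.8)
The plan is to augment $G$ with a single extra vertex so that every degree becomes even, invoke the Eulerian-circuit characterization, and then read the required trails off the circuit. First I would record the elementary parity fact that the number of odd-degree vertices of any graph is even, so I may write $|V_{odd}|=2k$ with $k\ge 1$; then the asserted count $|V_{odd}|/2=k$ is a positive integer. Next, form the auxiliary graph $G^{+}$ by adding one new vertex $w$ and joining it by a single edge to each vertex of $V_{odd}$. In $G^{+}$ every former odd-degree vertex gains exactly one unit of degree and so becomes even, the originally even vertices are untouched, and $w$ itself has degree $|V_{odd}|=2k$, which is even; hence all degrees of $G^{+}$ are even. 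Moreover $G^{+}$ is connected, since $G$ is connected and $w$ is adjacent to at least one (in fact to every) vertex of $V_{odd}$. Therefore $G^{+}$ admits an Eulerian circuit $C$.

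Next I would cut $C$ at $w$. Since $\deg_{G^{+}}(w)=2k$ and each passage of the circuit through a vertex consumes two of its incident edges, $C$ passes through $w$ exactly $k$ times; deleting these $k$ occurrences of $w$ from the cyclic edge-sequence of $C$ splits it into $k$ arcs $T_1,\dots,T_k$. Each $T_i$ uses no edge incident to $w$, so it lies entirely in $G$, and it is a trail because $C$ repeats no edge. As $C$ traverses every edge of $G^{+}$ exactly once, the deleted edges are precisely those incident to $w$, and the arcs $T_1,\dots,T_k$ partition the remaining edge set, which is exactly $E(G)$. Thus $\{T_1,\dots,T_k\}$ decomposes $E$ into $k$ trails; it remains to verify that each one is open.

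The two endpoints of $T_i$ are the vertex entered immediately after $w$ and the vertex left immediately before the next visit to $w$. Each endpoint is joined to $w$ by one of the two edges consumed at the flanking visits, and these are two \emph{distinct} edges of $G^{+}$. Because $w$ is joined to each vertex of $V_{odd}$ by exactly one edge, two distinct $w$-edges reach two distinct vertices, so the endpoints of $T_i$ differ and $T_i$ is open. This yields a decomposition into exactly $k=|V_{odd}|/2$ open trails. To justify the word \emph{exactly} (that $k$ trails are also necessary), I would use a parity count: in any open trail the two distinct endpoints have odd degree within the trail while every other vertex has even degree within it, so for each $v$ the parity of $\deg_G(v)$ equals the parity of the number of trails having $v$ as an endpoint; hence every vertex of $V_{odd}$ is an endpoint of at least one trail, and summing endpoint-incidences gives $2t\ge|V_{odd}|$ for any decomposition into $t$ open trails, i.e.\ $t\ge k$.

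The single point needing genuine care — and the main obstacle — is the openness check of the third step: one must exclude an arc whose two flanking $w$-edges return to the same vertex. This is exactly where the construction exploits that $w$ is joined to each odd vertex by a \emph{single} edge, since a vertex incident to only one $w$-edge cannot be both endpoints of the same arc; everything else is routine once the Eulerian circuit is in hand.
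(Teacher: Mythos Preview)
The paper does not give its own proof of this lemma; it simply cites the result as appearing in \cite{ref15} (B\'erezi, Bern\'ath, and Vizer). Your argument is the standard and correct one: adjoin a new vertex $w$ to all odd-degree vertices, take an Eulerian circuit of the resulting even connected graph, and cut it at the $k$ visits to $w$. Your openness check is the only genuinely delicate point and you handle it correctly: the outgoing $w$-edge at one visit and the incoming $w$-edge at the next visit are distinct edges of the circuit, and since $G^{+}$ is simple they land at distinct neighbours of $w$. The parity lower bound you append is a pleasant bonus, though the paper only needs the existence statement (it uses such a decomposition, calling it ``good'', without ever invoking minimality of the number of trails).
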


The following extension of Lemma 2.2 to general graphs  is direct.

 \begin{lemma} \label{lem22}    Let $G=(V,E)$ be a graph without isolated vertices or Eulerian components, and $V_{odd}$ be the set of odd-degree vertices    in $G$. Then $E$   decomposes into exactly
$\frac{|V_{odd}|}{2}$ open trails. \end{lemma}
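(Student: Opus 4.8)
The plan is to reduce the claim to Lemma \ref{lem2} by handling each connected component separately. Let $C_1,\ldots,C_k$ denote the connected components of $G$. Since $G$ has no isolated vertices, every $C_i$ contains at least one edge; since $G$ has no Eulerian components, none of the $C_i$ is Eulerian.

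The key step is to invoke the classical characterization that a connected graph with at least one edge is Eulerian if and only if all of its vertices have even degree. Hence each $C_i$, being connected, having an edge, and not being Eulerian, must contain a vertex of odd degree. Writing $O_i$ for the set of odd-degree vertices of $C_i$, this says $O_i\neq\emptyset$, so Lemma \ref{lem2} applies to $C_i$ and produces a decomposition of $E(C_i)$ into exactly $\frac{1}{2}|O_i|$ open trails.

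To finish, I would observe that every trail of $G$ lies entirely within a single component, so the trails produced for different $C_i$ are automatically pairwise edge-disjoint; their union is therefore a decomposition of $E=\bigcup_{i=1}^{k}E(C_i)$ into $\sum_{i=1}^{k}\frac{1}{2}|O_i|$ open trails. Since $V(C_1),\ldots,V(C_k)$ partition $V$, the sets $O_1,\ldots,O_k$ partition $V_{odd}$, so $\sum_{i=1}^{k}|O_i|=|V_{odd}|$, and the number of trails is exactly $\frac{|V_{odd}|}{2}$.

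I do not expect any genuine obstacle here; the statement is essentially a bookkeeping extension of Lemma \ref{lem2}. The only point needing a moment's care is verifying that each component really satisfies the hypotheses of Lemma \ref{lem2} (connectedness together with a nonempty odd-degree set), which is precisely what excluding isolated vertices and Eulerian components guarantees, via Euler's theorem.
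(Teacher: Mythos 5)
Your proposal is correct and is precisely the componentwise reduction to Lemma \ref{lem2} that the paper has in mind when it calls the extension ``direct'' (the paper gives no written proof). The one point you flag --- that a non-Eulerian connected component with at least one edge must contain an odd-degree vertex --- is exactly the observation needed, so there is nothing to add.
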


 The  open   trail decomposition  of $E_G$   in Lemma \ref{lem22}  is called   \emph{good}.

\begin{lemma} \label{lem5} Let $G=(X\cup Y, E)$ be a connected bipartite graph with  at least one odd-degree vertex in each of $X$ and $Y$. Then $E$ admits a good open trail decomposition containing at least one  {$XY$}-{trail}.
\end{lemma}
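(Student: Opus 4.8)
The plan is to start from an arbitrary good open trail decomposition of $E$ and, if it happens to contain no $XY$-trail, to splice two of its trails into two $XY$-trails while keeping the total number of trails unchanged, so that the result is still good.

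Since $G$ is connected and has an odd-degree vertex, Lemma~\ref{lem2} yields a good open trail decomposition $\mathcal{T}$ of $E$ into $|V_{odd}|/2$ open trails. If one of these is an $XY$-trail we are done, so suppose none is; then every trail in $\mathcal{T}$ is open with both ends in the same part, i.e.\ is an $X$-trail or a $Y$-trail. A parity count shows that every odd-degree vertex of $G$ is an end of at least one trail of $\mathcal{T}$: each trail contributes an even number to $d_G(u)$ plus one for each of its two (distinct, the trail being open) ends that equals $u$, so $d_G(u)$ and the number of trail-ends at $u$ have the same parity. Picking $x\in V_{odd}\cap X$ and $y\in V_{odd}\cap Y$, both nonempty by hypothesis, some trail ending at $x$ is then an $X$-trail and some trail ending at $y$ is a $Y$-trail. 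Hence $\mathcal{T}=\mathcal{X}\cup\mathcal{Y}$ where $\mathcal{X}$ is a nonempty family of $X$-trails and $\mathcal{Y}$ a nonempty family of $Y$-trails.

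Next I use connectivity to locate a vertex shared by an $X$-trail and a $Y$-trail. Let $E_X$ (resp.\ $E_Y$) be the union of the edge sets of the trails in $\mathcal{X}$ (resp.\ $\mathcal{Y}$); these partition $E(G)$, both are nonempty, and every edge of $E_X$ has both endpoints among the vertices met by $\mathcal{X}$ (similarly for $E_Y$). If no vertex were met by both $\mathcal{X}$ and $\mathcal{Y}$, then $G$ would be disconnected; so there is a vertex $v$ lying on some $T\in\mathcal{X}$ and some $T'\in\mathcal{Y}$. Write $T$ as the concatenation of a subtrail $A$ from one ($X$-)end of $T$ to $v$ and a subtrail $B$ from $v$ to the other ($X$-)end of $T$ (with $A$ or $B$ empty if $v$ is an end of $T$), and split $T'$ at $v$ into $C$ (from a $Y$-end to $v$) and $D$ (from $v$ to the other $Y$-end). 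Replacing $T,T'$ in $\mathcal{T}$ by $A\cdot D$ and $C\cdot B$ repartitions exactly the edges of $T\cup T'$, so we again have a decomposition of $E$ into $|V_{odd}|/2$ open trails; moreover $A\cdot D$ runs from a vertex of $X$ to a vertex of $Y$ and $C\cdot B$ from a vertex of $Y$ to a vertex of $X$, and each is nonempty with distinct ends because $G$ is bipartite, so both are open $XY$-trails. This modified decomposition is good and contains an $XY$-trail.

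The one point needing care is the last step when $v$ is an endpoint of $T$ or of $T'$, so that one of $A,B,C,D$ degenerates to the empty trail; the observation that $v$ cannot belong to both $X$ and $Y$ is precisely what keeps $A\cdot D$ and $C\cdot B$ nonempty open $XY$-trails in all cases. Everything else is routine verification.
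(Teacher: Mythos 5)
Your proof is correct and uses essentially the same idea as the paper: locate an $X$-trail and a $Y$-trail sharing a vertex (guaranteed by connectivity once parity forces both types to exist) and splice them at that vertex into two $XY$-trails without changing the number of trails. The only cosmetic difference is that the paper phrases this as a contradiction against a decomposition chosen to maximize the number of $XY$-trails, whereas you repair an arbitrary decomposition directly; your explicit justification of the shared vertex and of the degenerate end cases is a welcome bit of extra care.
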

\begin{proof} By Lemma \ref{lem22},   $E$ admits a good open trail decomposition $\mathcal T$. If there are exactly two
vertices of odd degree, the result follows directly. So assume there exist at least four odd-degree vertices.   We choose $\mathcal T$ such that it includes the most {$XY$}-{trails}.

Suppose to the contrary that $\mathcal T$ admits no  {$XY$}-{trail}. Since  $G$ is connected and $G$ admits  at least one odd-degree vertex in each of $X$ and $Y$,  $\mathcal T$ includes an $(x_{1},x_{2})$-trail $T_1$ and a $(y_{1},y_{2})$-trail $T_2$    that share a common vertex $u$, where $x_i\in X$ and $y_i\in Y$ for $i=1,2$, with $u$ possibly being  in $\{x_{1},x_{2},y_{1},y_{2}\}$.     Let  $T'_1$ be the $(x_1,y_2)$-trail consisting of  the edges    from $x_1$ to $u$ along $T_1$ and   from $u$ to $y_2$ along $T_2$; let $T'_2$ be the $(y_1,x_2)$-trail  consisting of  the edges    from $y_1$ to $u$ along $T_2$ and   from $u$ to $x_2$ along $T_1$.
Then $(\mathcal T\setminus\{T_1,T_2\})\cup \{T'_1,T'_2\}$ is also a good  open trail decomposition of $G$. However,    $T'_1, T'_2$ are both $XY$-trails when   $u\notin\{x_{1},x_{2},y_{1},y_{2}\}$, and one  of $T'_1,T'_2$ is an    $XY$-trail  when   $u\in\{x_{1},x_{2},y_{1},y_{2}\}$,   a contradiction.
Thus,  $E$ admits a good open trail decomposition containing at least one  {$XY$}-{trail}.

 This completes the proof of Lemma \ref{lem5}.
\end{proof}
The following lemma is a general tool that could be useful in other contexts.

\begin{lemma} \label{lem6}  Let $G=(X\cup Y, E)$ be a   bipartite   graph without isolated vertices  or Eulerian components,  where  either each vertex in $Y$ has an even degree or there   exists a component with at least one odd-degree vertex in each of  $X$ and $Y$.   Suppose $L$  consists of $l_1$   $1$-labels and $l_2$   $2$-labels,    with    $l_1+l_{2}=|E|$ and $|l_1-l_{2}|\leq 2$. Then there   is a  labeling  $f$ on $G$ with $L$     such that the   following conditions hold (where  $l_{\mu,v}$ is the number of edges incident to $v$ that are labeled with ${\mu}$-labels, for $\mu=1,2$).   \begin{kst}
   \item[(i)]   Each vertex in $X$ receives a   $\{1,2\}$-sum.
\item[(ii)]    \vspace{1mm} If each vertex in $Y$ has an even   degree, then $l_{1,y}=l_{{2},y}$ for each vertex $y$  but at most one (supposed to be $y'$ if exists)  in $Y$, and   $|l_{{1},y'}-l_{{2},y'}|=2$. If there exists at least one  odd-degree vertex  in $Y$, then $|l_{{1},y}-l_{{2},y}|\leq 1$ for each vertex $y$  in $Y$.
    \item[(iii)]    \vspace{1mm} Let $W$ and $Y_{2}$  be the set of pendant  vertices and even-degree vertices   in $Y$, respectively; let      $Y_3=Y\setminus(W\cup Y_2)$.  If  $|Y_{3}|\leq1$,    $|Y_{2}\cup Y_{3}|\geq \frac{|V_G|}{2}$  and each vertex in $Y_2\cup Y_3$ has degree at least four, then $X\cup W$ admits at least one $\mu$-vertex sum for each $\mu\in\{1,2\}$.
       \end{kst}   \end{lemma}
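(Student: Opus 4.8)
The plan is to build the labeling $f$ edge-set by edge-set, handling the three conditions in a nested fashion. The backbone of the construction will be a good open trail decomposition, supplied by Lemma \ref{lem22} (and, when we need an $XY$-trail, by Lemma \ref{lem5} applied to the relevant component). Along any trail $e_1,e_2,\ldots,e_t$ we will label the edges by alternating $1$-labels and $2$-labels, say $e_1$ a $1$-label, $e_2$ a $2$-label, and so on; then every internal vertex of the trail, which meets exactly two consecutive edges $e_i,e_{i+1}$, sees one $1$-label and one $2$-label, so its partial sum along that trail is a $\{1,2\}$-sum and the counts $l_1,l_2$ it accumulates are equal. The only places where the alternation can fail to balance are the two ends of each trail. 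So the heart of the argument is: \emph{route the trails so that the unbalanced ends land where we want them}, namely in $X$ (for (i)) or — for a single chosen vertex $y'$ — in $Y$ (for (ii)), and keep global parity so that the multiset $L$ is exactly exhausted.

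First I would reduce to the connected case: it suffices to prove the statement componentwise, distributing $L$ among components so that within each component the local surplus $|l_1-l_2|$ is at most $2$ (and exactly the right parity), which is possible because $|l_1-l_2|\le 2$ globally and each component has an even number of odd-degree vertices. So fix a component $C$. If every vertex of $Y\cap C$ has even degree, then every odd-degree vertex of $C$ lies in $X$; take a good open trail decomposition of $C$, and observe that each trail has both ends in $X$ (an odd-degree vertex, where the alternation leaves a surplus of one label of some type) or is a trail whose ends we may also take in $X$ after possibly splicing as in the proof of Lemma \ref{lem5}. Alternating labels on each trail then gives every $x\in X$ a $\{1,2\}$-sum — establishing (i) — while each $y\in Y\cap C$ is internal to every trail through it and hence gets $l_{1,y}=l_{2,y}$; the lone exception $y'$, if the parity of $|E(C)|$ forces one, is produced by deliberately starting and ending one trail so as to dump a surplus of $\pm 2$ at a single even-degree vertex of $Y$ (concretely, by choosing the two labels at some fixed vertex $y'$ both of the same type and propagating). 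If instead $C$ contains an odd-degree vertex in each of $X$ and $Y$, invoke Lemma \ref{lem5} to get a good decomposition with at least one $XY$-trail $T^\ast$; on $T^\ast$ the alternation leaves a surplus of one label at the $Y$-end, giving $|l_{1,y}-l_{2,y}|\le1$ there, and every other $y\in Y\cap C$ is still internal to all its trails, so $|l_{1,y}-l_{2,y}|\le 1$ throughout $Y$ — this is (ii) in the odd case — while the $X$-ends again give (i). Managing the global count $L$ amounts to choosing, for each trail, which type gets the extra label at the odd-degree $X$-ends, and this is a straightforward balancing of $l_1$ against $l_2$ using the slack $|l_1-l_2|\le2$.

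For (iii), the extra hypotheses ($|Y_3|\le1$, $|Y_2\cup Y_3|\ge |V_G|/2$, and every vertex of $Y_2\cup Y_3$ has degree $\ge 4$) are there to give room to \emph{perturb} the labeling built above so that $X\cup W$ realizes both a $1$-vertex sum and a $2$-vertex sum. The idea is: after the alternating construction, a pendant vertex $w\in W$ has a vertex sum equal to its unique edge label, which is either a $1$-label or a $2$-label depending on its position in its trail; and an $x\in X$ has a vertex sum that is a $\{1,2\}$-sum. Since $|l_1-l_2|\le 2$ and $l_1+l_2=|E|$, we have plenty of labels of each residue class available; by locally swapping which residue class a particular edge at $x$ (or at a pendant edge) carries — compensating at a nearby high-degree vertex $y\in Y_2\cup Y_3$, whose degree $\ge 4$ lets us absorb the compensating swap while keeping $|l_{1,y}-l_{2,y}|$ within the bound of (ii), and whose abundance (the $|V_G|/2$ condition) guarantees such a $y$ is reachable along a trail from any given $x$ or $w$ — we can adjust the residue of one chosen vertex in $X\cup W$ up or down mod $3$ without spoiling (i) or (ii). Doing this at two different vertices (or the same vertex twice) produces the desired $1$-vertex sum and $2$-vertex sum in $X\cup W$.

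The main obstacle I anticipate is (iii), specifically the bookkeeping of the perturbation: one must verify that the local swaps used to shift a vertex's residue mod $3$ can always be routed through a degree-$\ge 4$ vertex of $Y_2\cup Y_3$ without violating the tight balance constraints of (ii) (the "$l_{1,y}=l_{2,y}$ for all but one $y$" condition is very rigid), and that the two perturbations needed for the two residues $\mu=1,2$ can be done simultaneously or sequentially without interfering. The conditions $|Y_3|\le 1$ and $|Y_2\cup Y_3|\ge |V_G|/2$ are presumably exactly what is needed to keep enough "absorbing capacity" on the $Y$-side; making that precise — i.e., showing the perturbation graph has the needed connectivity/degree to carry out both adjustments — is where the real work lies. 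The parts (i) and (ii), by contrast, I expect to follow cleanly from the alternating-labels-on-a-good-trail-decomposition scheme once the componentwise distribution of $L$ is set up.
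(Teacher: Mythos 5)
There is a genuine gap at the very foundation of your construction, and it is an arithmetic one: if an internal vertex of a trail ``sees one $1$-label and one $2$-label,'' its partial sum from those two edges is congruent to $1+2=3\equiv 0\pmod 3$, i.e.\ a $0$-sum, not a $\{1,2\}$-sum, and indeed whenever $l_{1,v}=l_{2,v}$ one has $\sigma(v)\equiv l_{1,v}-l_{2,v}\equiv 0\pmod 3$. This sinks your treatment of condition (i). In a good open trail decomposition every trail end is an odd-degree vertex and every even-degree vertex is passed through internally each time it is visited (since the number of trails is exactly $|V_{odd}|/2$, each odd-degree vertex is an end exactly once and each even-degree vertex never is). So your plan of ``routing the unbalanced ends into $X$'' only helps the odd-degree vertices of $X$; an even-degree $x\in X$ accumulates $l_{1,x}=l_{2,x}$ under pure alternation and receives a $0$-sum, violating (i). The lemma makes no assumption that vertices of $X$ have odd degree, and in the paper's application they generally do not. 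The paper's proof handles exactly this point with the second clause of its rule (R3): on the \emph{last} visit of a trail to an even-degree vertex $x\in X$, the outgoing edge deliberately repeats the residue of the incoming edge instead of alternating, forcing $|l_{1,x}-l_{2,x}|=2$ and hence a $\{1,2\}$-sum, while every visit to a vertex of $Y$ still alternates so that (ii) survives. Your proposal contains no such mechanism, and the subsequent global bookkeeping (showing the prescribed numbers of $1$- and $2$-labels are exhausted exactly, which in the paper requires a careful case analysis over the $Y$-, $XY$- and $X$-trails plus a patch in the corner case $r_2=0$, $l_1=l_2+2$) would have to be redone around it.

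Your sketch of (iii) is in the right spirit --- perturb by swapping the residues of two edges at a high-degree vertex of $X$, compensating at vertices of $Y_2\cup Y_3$ whose degree-$\ge 4$ and abundance ($|Y_2\cup Y_3|\ge |V_G|/2$) guarantee the swap partners exist --- and this matches the paper's switching argument. But as written it rests on the labeling produced for (i) and (ii), so it cannot be assessed until the gap above is repaired.
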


\begin{proof}  It is sufficient to assume $l_1\geq l_{2}$.

  We show (i) and (ii) first. Let $2r$ be the   number of odd-degree vertices in $G$. By Lemma \ref{lem22},  $E$ decomposes into $r$ open trails $T_1,T_2,\ldots,T_{r}$,  where we suppose     $T_i$ is a $Y$-trail for $i\in[1,r_1]$,   an $XY$-trail for $i\in[r_1+1,r_1+r_2]$, and an $X$-trail for $i\in[r_1+r_2+1,r]$. Let
\begin{flalign*}  \ \ \ \ \ \ \ \
    T_i=\left\{
   \begin{array}{ll}
y_{i,1}x_{i,2}y_{i,2}\ldots x_{i,t_i}y_{i,t_i},& {\rm for}\ i\in[1,r_1],\vspace{1mm}  \\
 y_{i,1}x_{i,2}y_{i,2}\ldots x_{i,t_i}y_{i,t_i}x_{i,t_i+1},&  {\rm for}\ i\in[r_1+1,r_1+r_2],\\
 x_{i,1}y_{i,1}x_{i,2}y_{i,2}\ldots x_{i,t_i}y_{i,t_i}x_{i,t_i+1},&  {\rm for}\ i\in[r_1+r_2+1,r].
   \end{array}
  \right.
  &&&\end{flalign*}
     See Figure \ref{fig-4} for example.

 \begin{figure}[htbp]
 \centering
 %\psfrag{a}{(a)}
 %\psfrag{b}{(b)}
 \includegraphics[height=3.6cm]{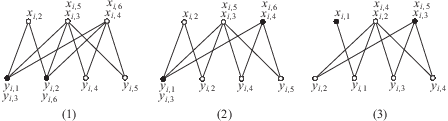}
\caption{\label{fig-4} (1) A $Y$-trail where $i\in[1,r_1]$;  (2) An $XY$-trail where $i\in[r_1+1,r_1+r_2]$; (3) An $X$-trail where $i\in[r_1+r_2+1,r]$.}
\end{figure}

     We choose the open trail decomposition of $E$  such that $r_2$ is maximum. If $r_1>0$, then there exists an odd-degree vertex in $Y$. According to our hypothesis,    there exists a component  $H$   of $G$ such that $H$  has    odd-degree  vertices    in both of   $X$ and $Y$. In this case,   Lemma \ref{lem5} implies  $r_2>0$.

        Label  $T_1,T_2,\ldots,T_{r}$ in the order one by one.    Label the edges of $T_i$ in the order along the trail starting from $y_{i,1}x_{i,2}$ (resp. $x_{i,1}y_{i,1}$) for   $i\in[1,r_1+r_2]$ (resp. for $i\in[r_1+r_2+1,r]$). We label  as many edges as possible    under the following rules.
                  \begin{kst}
        \item[(R1)]
        For $(i,j)\in ([1,r_1]\times [2,t_i-1])\cup ([r_1+1,r_1+r_2]\times [2,t_i])\cup([r_1+r_2+1,r]\times [1,t_i])$, label $y_{i,j}x_{i,j+1}$  with a $({3-\mu})$-label  whenever    $x_{i,j}y_{i,j}$ has been labeled with a ${\mu}$-label ($\mu=1,2$); see Figure \ref{fig-1}, or see Figures \ref{fig-6}, \ref{fig-7} and   \ref{fig-8}, for example.

 \begin{figure}[htbp]
 \centering
 %\psfrag{a}{(a)}
 %\psfrag{b}{(b)}
 \includegraphics[height=2.6cm]{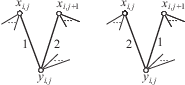}
\caption{\label{fig-1} The labeling rule of (R1).}
\end{figure}

      \item[(R2)] \vspace{1mm} Let $\tilde{l}_{\mu,i}$ be the number  of $\mu$-labels that have already been used right before labeling $y_{i,1}x_{i,2}$ for $\mu=1,2$ and $i\in[2,r_1+r_2]$.  \begin{kst}
\item  \vspace{1mm} Label $y_{1,1}x_{1,2}$ with a $1$-label.  For $i\in [2,r_1]$, label $y_{i,1}x_{i,2}$ with a $1$-label if $\tilde{l}_{1,i}\leq \tilde{l}_{2,i}$, and   with a $2$-label otherwise.
    \item  \vspace{1mm} Suppose   $l_1=l_2+\iota$ ($\iota=0,1,2$), then for $i\in [r_1+1,r_1+r_2]$, label $y_{i,1}x_{i,2}$ with a $1$-label if $\tilde{l}_{1,i}\leq \tilde{l}_{2,i}+\iota$, and   with a $2$-label otherwise.
        \item  \vspace{1mm}  For $i\in [r_1+r_2+1,r]$, label  $x_{i,1}y_{i,1}$    with a $1$-label.
    \end{kst}
 \item[(R3)]    \vspace{1mm}   Let $(i,j)\in [1,r]\times[2,t_i]$. Denote by $h(x_{i,j})$ the number of edges incident to $x_{i,j}$ that have been labeled  right before choosing a  label for $x_{i,j}y_{i,j}$. Then $h(x_{i,j})$ is odd  if  $d(x_{i,j})$ is even.  Suppose $y_{i,j-1}x_{i,j}$ has been labeled with a $\mu$-label ($\mu=1,2$).
 \begin{kst}
\item     \vspace{1mm} If  $x_{i,j}$ has an odd degree, or $h(x_{i,j})\leq d(x_{i,j})-3$,  then  label $x_{i,j}y_{i,j}$ with a $({3-\mu})$-label; see Figure \ref{fig-2}, or see $x_{i,3}$ in Figure \ref{fig-6}, or see $x_{i,4}$ in  Figure \ref{fig-7}, for example.

 \begin{figure}[htbp]
 \centering
 %\psfrag{a}{(a)}
 %\psfrag{b}{(b)}
 \includegraphics[height=2.6cm]{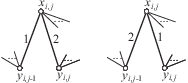}
\caption{\label{fig-2} The first labeling rule of (R3) when $x_{i,j}$ has an odd degree, or $h(x_{i,j})\leq d(x_{i,j})-3$.}
\end{figure}

    \item    \vspace{1mm} If  $x_{i,j}$ has  an even degree and $h(x_{i,j})= d(x_{i,j})-1$,   label $x_{i,j}y_{i,j}$ with a ${\mu}$-label; see Figure \ref{fig-3}, or   see $x_{i,5}$ in  Figure \ref{fig-6}, for example.
        \begin{figure}[htbp]
 \centering
 %\psfrag{a}{(a)}
 %\psfrag{b}{(b)}
 \includegraphics[height=2.6cm]{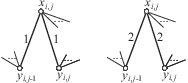}
\caption{\label{fig-3} The second labeling rule of (R3) when $x_{i,j}$ has an even degree  and  $h(x_{i,j})= d(x_{i,j})-1$.}
\end{figure}
\end{kst}
         \end{kst}

  Clearly, if $f$  completes without being suspended, then by (R3), one has that $|l_{1,x}-l_{{2},x}|=1$ or $2$ for each $x\in X$,  and then (i) holds; by   (R1), one   has that $|l_{1,y}-l_{{2},y}|\leq1$ for each $y\in Y$, and then  (ii) holds.

  We first analyze under what conditions $f$ fails to complete.

     For $\mu=1,2$ and $i\in[1,r]$, denote by $l_{\mu,T_i}$ the number of $\mu$-labels we need to assign in $T_i$ under the above labeling rules.   Then \begin{flalign}\label{eqmit} \ \ \ \ \ \ \ \
    l_{1,T_i}-l_{2,T_i}=\left\{
   \begin{array}{ll}
-2,0\ {\rm or}\ 2,&{\rm for}\  i\in[1,r_1],\vspace{1mm}  \\
  -1\ {\rm or}\ 1,&{\rm for}\  i\in[r_1+1,r_1+r_2],\vspace{1mm}\\
 0,& {\rm for}\ i\in[r_1+r_2+1,r].
   \end{array}
  \right.
  &&&\end{flalign}
 See Figures \ref{fig-6}, \ref{fig-7} and \ref{fig-8}, for example.
  \begin{figure}[htbp]
 \centering
 %\psfrag{a}{(a)}
 %\psfrag{b}{(b)}
 \includegraphics[height=3.6cm]{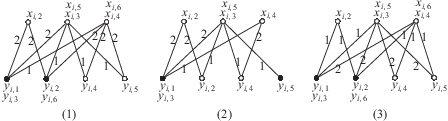}
\caption{\label{fig-6} (1) A labeled $Y$-trail where $l_{1,T_i}-l_{2,T_i}=-2$; (2) A  labeled $Y$-trail where $l_{1,T_i}-l_{2,T_i}=0$; (3) A labeled $Y$-trail where $l_{1,T_i}-l_{2,T_i}=2$.}
\end{figure}
\begin{figure}[htbp]
 \centering
 %\psfrag{a}{(a)}
 %\psfrag{b}{(b)}
 \includegraphics[height=3.6cm]{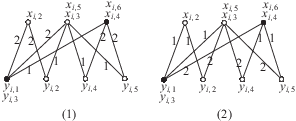}
\caption{\label{fig-7} (1) A labeled $XY$-trail where $l_{1,T_i}-l_{2,T_i}=-1$; (2) A  labeled $XY$-trail where $l_{1,T_i}-l_{2,T_i}=1$.}
\end{figure}
   \begin{figure}[htbp]
 \centering
 %\psfrag{a}{(a)}
 %\psfrag{b}{(b)}
 \includegraphics[height=3.2cm]{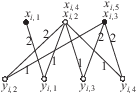}
\caption{\label{fig-8}   A labeled $X$-trail  where $l_{1,T_i}-l_{2,T_i}=0$.}
\end{figure}

   Denote by $l_\mu'$ (resp. $l_\mu''$  or $l_\mu'''$)   the number of   $\mu$-labels  that we need to
     finish  labeling $\{T_{i}|i\in[1,r_1]\}$ (resp. $\{T_{i}|i\in[r_1+1,r_1+r_2]\}$ or $\{T_{i}|i\in[r_1+r_2+1,r]\}$) under the labeling rules, for $\mu=1,2$. One has that $l_1'''=l_2'''$ by the third item of (\ref{eqmit}), and so the labeling process completes without being suspended if and only if $l_1-(l_1'+l_1'')=l_2-(l_2'+l_2'')$.

     Let
      \begin{flalign*}  \ \ \ \ \ \ \ \ \mathcal T'=\{T_i|i\in[1,r_1],l_{1,T_i}-l_{2,T_i}\neq0\}=\{T_{i_1},T_{i_2},\ldots,T_{i_{r'_1}}\},
    &&&\end{flalign*}  where $i_1< i_2< \ldots< i_{r'_1}$.  By the first item of (\ref{eqmit}) and the first item of (R2), one has that $l_{1,T_{i_j}}-l_{2,T_{i_j}}=2$
  for  each odd integer $j$ in $[1,r_1']$ and $l_{1,T_{i_j}}-l_{2,T_{i_j}}=-2$
  for each even integer $j$ in $[1,r_1']$. This  implies
   \begin{flalign}\label{leq1} \ \ \ \ \ \ \ \
    \left\{
   \begin{array}{ll}
l'_1=l'_2,&{\rm when}\  r_1'\ {\rm is\ even},\vspace{1mm}  \\
 l'_1=l'_2+2,&{\rm when}\  r_1'\ {\rm is\ odd}\vspace{1mm}.
   \end{array}
  \right.
  &&&\end{flalign}
   Note that $r_2$ is odd if and only if $l_1=l_2+1.$ By (\ref{eqmit}), (\ref{leq1}) and the second item of  (R2),  the following  hold.
   \begin{kst}
   \item \vspace{1mm} If $l_1=l_2$ (then $r_2$ is even) and $l'_1=l'_2$, then $l_{1,T_{r_1+j}}-l_{2,T_{r_1+j}}=1$
  for  each odd integer $j$ in $[1,r_2]$ and $l_{1,T_{r_1+j}}-l_{2,T_{r_1+j}}=-1$
  for each even integer $j$ in $[1,r_2]$. This implies $l''_1=l''_2$ and so  $l_1-(l_1'+l_1'')=l_2-(l_2'+l_2'')$.

   \item \vspace{1mm}
   If $l_1=l_2$   and $l'_1=l'_2+2$ (then $r_1>0$ and so $r_2>0$), then   $l_{1,T_{r_1+j}}-l_{2,T_{r_1+j}}=-1$
  for $j=1,2$,  $l_{1,T_{r_1+j}}-l_{2,T_{r_1+j}}=1$
  for each odd integer $j$ in $[3,r_2]$, and $l_{1,T_{r_1+j}}-l_{2,T_{r_1+j}}=-1$
  for each even integer $j$ in $[3,r_2]$. This implies $l''_1=l''_2-2$  and so $l_1-(l_1'+l_1'')=l_2-(l_2'+l_2'')$.

  \item \vspace{1mm} If $l_1=l_2+1$ (then $r_2$ is odd) and $l'_1=l'_2$, then
     $l_{1,T_{r_1+j}}-l_{2,T_{r_1+j}}=1$
  for $j=1$ and each even integer $j$ in $[2,r_2]$ and $l_{1,T_{i_j}}-l_{2,T_{i_j}}=-1$
  for each odd integer $j$ in $[3,r_2]$. This implies $l''_1=l''_2+1$   and so $l_1-(l_1'+l_1'')=l_2-(l_2'+l_2'')$.

   \item \vspace{1mm}
   If $l_1=l_2+1$ and $l'_1=l'_2+2$, then   $l_{1,T_{r_1+j}}-l_{2,T_{r_1+j}}=-1$
  for each odd integer $j$ in $[1,r_2]$, and   $l_{1,T_{r_1+j}}-l_{2,T_{r_1+j}}=1$
  for each even integer $j$ in $[1,r_2]$. This implies $l''_1=l''_2-1$ and so $l_1-(l_1'+l_1'')=l_2-(l_2'+l_2'')$.

  \item \vspace{1mm}
  If  $l_1=l_2+2$ (then $r_2$ is even) and  $l'_1=l'_2$, then $l_{1,T_{r_1+j}}-l_{2,T_{r_1+j}}=1$
  for $j=1,2$ and   each odd integer $j$ in $[3,r_2]$, and $l_{1,T_{r_1+j}}-l_{2,T_{r_1+j}}=-1$
  for each even integer $j$ in $[3,r_2]$. This implies $l''_1=l''_2+2$ when $r_2>0$. That is, $l_1-(l_1'+l_1'')=l_2-(l_2'+l_2'')$ when $r_2>0$.

  \item \vspace{1mm} If  $l_1=l_2+2$ and  $l'_1=l'_2+2$ (then $r_1>0$ and so $r_2>0$), then $l_{1,T_{r_1+j}}-l_{2,T_{r_1+j}}=1$
  for each odd integer $j$ in $[1,r_2]$, and $l_{1,T_{r_1+j}}-l_{2,T_{r_1+j}}=-1$
  for each even integer $j$ in $[1,r_2]$. This  implies $l''_1=l''_2$ and so $l_1-(l_1'+l_1'')=l_2-(l_2'+l_2'')$.
  \end{kst}

Therefore, $f$ fails to complete if and only  if   $r_2=0$  (then $r_1=0$  and so $l'_1=l'_2=0$ and $r_3>0$ since $r_1+r_2+r_3>0$) and $l_1=l_2+2$.
  In that case,  we   just  label each of $x_{r,1}y_{r,1}$ and  $y_{r,1}x_{r,2}$ with a    1-label, then   label $E_{T_r}\setminus \{x_{r,1}y_{r,1},y_{r,1}x_{r,2}\}$ under   (R1) and (R3). Then    the labeling process   completes  without being suspended anymore.  One has (i) holds since we still obey (R3). What is more, one has    $l_{1,y_{r,1}}-l_{{2},y_{r,1}}=2$, and    $l_{1,y}=l_{{2},y}$ for each     $y\in Y_2\setminus\{y'\}$, where we set $y'=y_{r,1}$. This implies  (ii) also  holds.
   Thus, (i) and (ii) hold.

 \vspace{2mm} For (iii), let $f$ be a  labeling of $G$ described above in the proofs of   (i) and (ii). Note that $|l_{1,x}-l_{2,x}|\in\{1,2\}$    for each $x\in X$ under $f$ by (R3).  Let $q_\mu$   be the number of  $\mu$-vertex sums in $X\cup W$ under $f$ for $\mu=1,2$. Denote by $q=|X\cup W|$. Clearly, $q_1+q_2=q$ and $q\geq |X|\geq4$ since $\delta_G(Y_2\cup Y_3)\geq4$. If $\min\{q_1,q_2\}>0$, then $f$ satisfies  (i), (ii), (iii) and we are done.
   Suppose (without loss of generality)   that $q_2=0$.   That is, $q_1=q$ and  $|q_1-q_2|=q$.   We would do some switching on $f$ to satisfy (iii) as follows.

      Let $H=G[X\cup Y_2\cup Y_3]$.  Then
         \begin{flalign*}  \ \ \ \ \ \ \ \
         \sum_{x\in X}d_H(x)=\sum_{y\in Y_2\cup Y_3}d_H(y)\geq 4|Y_2\cup Y_3|\geq 4|X|,&&&\end{flalign*}
      since $\delta_H(Y_2\cup Y_3)=\delta_G(Y_2\cup Y_3)\geq4$  and $|Y_2\cup Y_3|\geq \frac{|V_G|}{2}$ (so $|Y_2\cup Y_3|\geq |X|$). It follows that   $\Delta_H(X)\geq 4$. Let $u$ be a vertex in $X$ with $d_{H}(u)\geq4$. Recall that $|l_{1,u}-l_{2,u}|\in\{1,2\}$ under $f$.    If  $d_{G}(u)\geq5$,  then $l_{2,u}\geq2$;  if $d_{G}(u)=d_{H}(u)=4$, then  $l_{1,u}=1$ and $l_{2,u}=3$ since $u$ receives  a 1-vertex sum by assumption.  So we always have  $l_{2,u}\geq2$. Let $uy_1,uy_2$ be two edges  in $H$, both labeled with a 2-label. Note that each edge in $E_G(u)\setminus E_H(u)$ is assigned with a 1-label since each vertex in $W$ receives a 1-vertex sum by our assumption. So   $y_1,y_2\in Y_2\cup Y_3$.  Recall that $|l_{1,y}-l_{2,y}|\leq2$ for each $y\in Y_2\cup Y_3$ by (ii), and     $\delta_H(Y_2\cup Y_3)=\delta_G(Y_2\cup Y_3)\geq4$.   So  there exists  at least one    edge  $u_{i}y_i$ in $H$  labeled with a $1$-label  for each $i\in\{1,2\}$. Let $f'$ be the labeling obtained from $f$ by exchanging the labels on $uy_i$ and   $u_{i}y_i$ for each $i\in\{1,2\}$, and let   $q'_\mu$   be the number of $\mu$-vertex sums in $X\cup W$ under $f'$ for $\mu=1,2$.   Then each of     $u,u_{1},u_{2}$ receives a  2-vertex sum under $f'$ and all other vertex sums remain unchanged after switching. So (i) and (ii) still hold under $f'$. On the other hand, one has $q'_2=3$ and $q'_1=q-3\geq1$ since   $q\geq4$. This implies
   $\min\{q'_1,q'_2\}>0$ and   $f'$ satisfies  (i), (ii) and (iii).

  This completes the proof of Lemma \ref{lem6}.
 \end{proof}

\begin{lemma} \label{lem4}  Let $G$ be   a  connected  graph that admits a  vertex $v$ incident to at least    three non-cut  edges.  Then   there exist  two edges $e_1,e_2$  incident to $v$   such that $G-\{e_1,e_2\}$ is connected.
\end{lemma}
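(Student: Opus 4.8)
The plan is to exploit the hypothesis that $v$ is incident to at least three non-cut edges, and remove two of them in a way that keeps $G$ connected. The first edge is free: since at least one edge $e_1$ at $v$ is a non-cut edge, $G - e_1$ is connected. The real work is choosing the second edge $e_2$ among the remaining non-cut edges at $v$ so that connectivity survives a second deletion. The natural obstacle is that $e_2$, while a non-cut edge of $G$, may have become a cut edge of $G - e_1$; I need to argue that at most one of the (at least two) remaining non-cut edges at $v$ can have this property.

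\medskip
\noindent
\emph{First} I would set up notation: let $e_1 = vx_1$, $e_2 = vx_2$, $e_3 = vx_3$ be three non-cut edges of $G$ incident to $v$ (the $x_i$ need not be distinct only if multi-edges were allowed; since $G$ is simple they are distinct). Delete $e_1$; $H \coloneqq G - e_1$ is connected. \emph{Next} I would ask which of $e_2, e_3$ are cut edges of $H$. Suppose, for contradiction, that both $e_2$ and $e_3$ are cut edges of $H$. A cut edge lies on no cycle, so in $H$ there is no cycle through $e_2$ and no cycle through $e_3$. But $e_2$ is a non-cut edge of $G$, so $G$ has a cycle $C_2$ through $e_2$; since $C_2$ is not a cycle of $H$, it must use $e_1$, i.e. $C_2$ passes through $v$ via the two edges $e_1$ and $e_2$. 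Likewise $G$ has a cycle $C_3$ through $e_3$ that must use $e_1$, so $C_3$ passes through $v$ via $e_1$ and $e_3$.

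\medskip
\noindent
\emph{Then} I would combine $C_2$ and $C_3$ to derive the contradiction. Both cycles contain the edge $e_1 = vx_1$. Walk along $C_2$ from $v$ starting with $e_2$ until it first meets a vertex of $C_3$ other than $v$ (such a meeting exists — at the latest at $x_1$, since $x_1 \in C_3$); call that first common vertex $w$, and let $P$ be the initial $v$--$w$ segment of $C_2$, which uses $e_2$ but avoids $e_1$ (because $e_1 = vx_1$ and $P$ reaches $x_1$, if at all, only as its endpoint $w$, at which point it stops — and in any case $P$'s first edge is $e_2 \neq e_1$, and $P$ is a path so it cannot reuse $v$). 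Now $C_3$ is a cycle through $v$ and through $w$, hence it is the union of two internally-disjoint $v$--$w$ paths $Q_1, Q_2$; the edge $e_3$ lies on exactly one of them, say $Q_1$, and $e_1$ lies on exactly one of them. If $e_1 \in Q_2$, then $P \cup Q_1$ is a closed walk through $e_2$ and $e_3$ avoiding $e_1$, yielding a cycle through $e_2$ in $H$ — contradiction. If $e_1 \in Q_1$, then $P \cup Q_2$ is a closed walk through $e_2$ avoiding $e_1$ (note $Q_2$ avoids $e_1$ and $P$ avoids $e_1$), again giving a cycle through $e_2$ in $H$ — contradiction. (A small case check is needed when $P$ is a single vertex, i.e. $C_2$ and $C_3$ share a neighbor of $v$ along $e_2$'s side, or when $w = x_1$; these are handled by the same walk-to-cycle extraction.) Hence at most one of $e_2, e_3$ is a cut edge of $H$, so one of them — call it $e_2$ after relabeling — is a non-cut edge of $H$, and $G - \{e_1, e_2\} = H - e_2$ is connected.

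\medskip
\noindent
\emph{The main obstacle} I anticipate is the bookkeeping in the last paragraph: making the "combine two cycles through $v$ and $e_1$ into a cycle avoiding $e_1$" argument fully rigorous, including the degenerate overlaps of $C_2$ and $C_3$. A cleaner alternative I would consider is a block-tree argument: non-cut edges of $G$ at $v$ lie in blocks of $G$ that are not single edges (2-connected blocks), and each such block contributes an ear-decomposition structure; deleting one edge inside a 2-connected block keeps that block connected and leaves every other block untouched, so connectivity of $G$ is preserved, and a second deletion from a 2-connected block with at least two non-cut edges at $v$ (or from a different block) works as long as we do not disconnect a single block — which requires the block to have at least two "independent" edges at $v$, guaranteed by 2-connectivity. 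I expect the block-cut-tree phrasing to make the degenerate cases vanish, so I would likely write the final proof that way while keeping the cycle argument as the intuition.
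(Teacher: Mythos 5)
Your main argument is correct and complete, but it takes a genuinely different route from the paper. You delete one non-cut edge $e_1=vx_1$ and show directly that at most one of the remaining non-cut edges $e_2,e_3$ at $v$ can become a bridge of $G-e_1$: if both did, every cycle of $G$ through $e_2$ (resp.\ $e_3$) would be forced to use $e_1$, and splicing the initial $v$--$w$ segment of $C_2$ (starting with $e_2$, internally disjoint from $C_3$) onto the arc of $C_3$ that avoids $e_1$ yields a cycle through $e_2$ avoiding $e_1$, a contradiction. This is an elementary, self-contained cycle-surgery proof; your case analysis is even slightly redundant, since $Q_1$ already uses $e_3$ at $v$ and so $e_1$ must lie on $Q_2$. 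The paper instead argues via blocks: every block containing a non-cut edge at $v$ is $2$-connected, hence contains at least two edges of $E'(v)$; if two distinct blocks each contain such an edge, delete one from each (each block is $2$-edge-connected); if all of $E'(v)$ lies in one block $B$, then $d_B(v)\geq 3$, so after deleting two of these edges $v$ still has a neighbour in $B$ while $B-v$ remains connected. The paper's proof is shorter given block machinery but leaves that last step implicit; yours avoids blocks entirely at the cost of the splicing bookkeeping, which you handle correctly.

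One caution about the ``cleaner alternative'' you say you would likely adopt for the final write-up: as sketched, it has a gap. A $2$-connected block containing exactly two non-cut edges at $v$ does \emph{not} in general survive the deletion of both of them: let $G$ be two triangles sharing only the vertex $v$; the block containing $va$ and $vb$ has exactly two non-cut edges at $v$, yet $G-\{va,vb\}$ is disconnected. So ``at least two independent edges at $v$ in the block, guaranteed by $2$-connectivity'' is not the right sufficient condition; the block route requires exactly the dichotomy the paper uses (either one block carries at least three of the edges at $v$, or two edges are taken from two distinct blocks). If you switch to the block phrasing you must add that case split; your cycle argument as written needs no such repair.
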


\begin{proof}  Let $E'(v)$  be the set of  non-cut edges incident to $v$. Let $e$ be an arbitrary edge in $E'(v)$, and suppose  $e$  is contained in some block.
 Clearly, $v$ is   contained in the block. If $e$ is the unique edge
 in $E'(v)$ that is contained in the block, then $e$ will be a cut edge, a contradiction. So  each block that contains some edge in $E'(v)$ contains   another  edge in $E'(v)$. If all edges in $E'(v)$ are included in some block, then arbitrary two  edges in $E'(v)$ are the desired   $e_1, e_2$.  If there are  two blocks including  edges in $E'(v)$, then      arbitrary two edges in $E'(v)$ that are in  distinct blocks are the desired   $e_1,e_2$.
\end{proof}

\begin{lemma} \label{lem3}    Let $J=\{p+3i-2,p+3i-1|i\in[1, k]\}$ and $J'=\{p+3i-1,p+3i+1|i\in[1, k]\}$ where $p$ is a 0-label.

\begin{kst} \item[(i)]   If $k$ is odd, then the labels in  $J$ can be paired such that the pair sums are exactly the $k$ continuous 0-sums in   $\{2p+\frac{3(k-1)}{2}+3i|i\in[1, k]\}$; the labels in  $J'$ can be paired such that the pair sums are exactly the $k$ continuous 0-sums in   $\{2p+\frac{3(k+1)}{2}+3i|i\in[1, k]\}$.

\item[(ii)] \vspace{2mm} If $k$ is positive even,  then the labels in  $J$ can be paired such that the pair sums are exactly the $k-1$ continuous 0-sums  in  $\{2p+3(\frac{k}{2}+1)+3i|i\in[1, k-1]\}$ and the other one 0-sum $2p+3$; the labels in  $J'$ can be paired such that the pair sums are exactly the $k-1$ continuous 0-sums  in  $\{2p+3(\frac{k}{2}+2)+3i|i\in[1, k-1]\}$ and the other one 0-sum $2p+6$.
    \end{kst}
 \end{lemma}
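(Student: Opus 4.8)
The plan is to turn the statement into a question about a single, explicitly chosen permutation. First I would note that, since $p$ is a $0$-label, $J$ splits into the $k$ many $1$-labels $a_i:=p+3i-2$ and the $k$ many $2$-labels $b_i:=p+3i-1$ ($i\in[1,k]$), while $J'$ splits into the $1$-labels $a'_i:=p+3i+1$ and the \emph{same} $2$-labels $b_i=p+3i-1$ ($i\in[1,k]$). Because a sum of two labels is a $0$-label exactly when one of them is a $1$-label and the other a $2$-label, any pairing of $J$ (resp.\ $J'$) into pairs of $0$-sum arises from a bijection $\pi\colon[1,k]\to[1,k]$ pairing $a_i$ (resp.\ $a'_i$) with $b_{\pi(i)}$, and the resulting pair sums are $a_i+b_{\pi(i)}=2p-3+3\bigl(i+\pi(i)\bigr)$ in the $J$ case and $a'_i+b_{\pi(i)}=2p+3\bigl(i+\pi(i)\bigr)$ in the $J'$ case. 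Hence in both cases it suffices to choose $\pi$ so that $\{\,i+\pi(i)\mid i\in[1,k]\,\}$ equals the prescribed set of integers (which then forces the pair sums to be the prescribed $0$-sums).

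For part (i) I would take $\pi$ to be the cyclic rotation by $\frac{k-1}{2}$: put $\pi(i)=i+\frac{k-1}{2}$ for $1\le i\le\frac{k+1}{2}$ and $\pi(i)=i-\frac{k+1}{2}$ for $\frac{k+1}{2}<i\le k$ (a bijection, since $k$ is odd). On the first range $i+\pi(i)=2i+\frac{k-1}{2}$ runs through the $\frac{k+1}{2}$ integers of one fixed parity from $\frac{k+3}{2}$ to $\frac{3k+1}{2}$ in steps of $2$; on the second range $i+\pi(i)=2i-\frac{k+1}{2}$ runs through the $\frac{k-1}{2}$ integers of the opposite parity from $\frac{k+5}{2}$ to $\frac{3k-1}{2}$ in steps of $2$. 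These two arithmetic progressions interleave and abut at both ends, so their union is exactly the block of $k$ consecutive integers from $\frac{k+3}{2}$ to $\frac{3k+1}{2}$. Substituting this into the pair-sum formulas of the first paragraph yields precisely $\{2p+\frac{3(k-1)}{2}+3i\mid i\in[1,k]\}$ for $J$ and $\{2p+\frac{3(k+1)}{2}+3i\mid i\in[1,k]\}$ for $J'$.

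For part (ii) I would first split off the unique pair realizing the exceptional sum: for $J$ one has $a_i+b_j=2p-3+3(i+j)$, so the only pair of sum $2p+3$ is $\{a_1,b_1\}=\{p+1,p+2\}$, and similarly (using $a'_i+b_j=2p+3(i+j)$) the only pair of $J'$ with sum $2p+6$ is $\{a'_1,b_1\}=\{p+4,p+2\}$. After deleting this pair and reindexing $i\mapsto i-1$, what remains is a copy of $J$ (resp.\ $J'$) with $p$ replaced by $p+3$ and $k$ replaced by the \emph{odd} number $k-1$, so part (i) applies and pairs the remaining $2(k-1)$ labels into $k-1$ consecutive $0$-sums. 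Evaluating the formula of part (i) at the parameters $(p+3,k-1)$ gives $\{2p+\frac{3k}{2}+3+3i\mid i\in[1,k-1]\}$ in the $J$ case and $\{2p+\frac{3k}{2}+6+3i\mid i\in[1,k-1]\}$ in the $J'$ case, i.e.\ exactly $\{2p+3(\frac{k}{2}+1)+3i\mid i\in[1,k-1]\}$ and $\{2p+3(\frac{k}{2}+2)+3i\mid i\in[1,k-1]\}$; together with the split-off pair (of sum $2p+3$, resp.\ $2p+6$) this is the desired collection.

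The main obstacle, such as it is, is the parity/interleaving bookkeeping in part (i) — verifying that the two arithmetic progressions produced by the two ranges of the rotation really do interleave into a gapless block of $k$ consecutive integers — together with the need to carry the additive constants carefully through the two reindexings ($J$ versus $J'$, and the $k\rightsquigarrow k-1$ reduction for even $k$) so that the endpoints of the resulting block line up with the constants written in the statement.
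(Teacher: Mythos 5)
Your proposal is correct and is essentially the paper's proof: the rotation $\pi(i)=i+\frac{k-1}{2}$ (wrapping around) is exactly the explicit pairing $\{p+3i-2,\,p+3(q+i)-1\}$, $\{p+3i-1,\,p+3(q+1+i)-2\}$ that the paper writes down for $k=2q+1$, and your reduction of the even case to the odd case via splitting off $\{p+1,p+2\}$ (resp.\ $\{p+2,p+4\}$) reproduces the paper's part (ii) pairing verbatim. The only difference is presentational: you phrase the construction as a permutation and verify the interleaving of the two arithmetic progressions explicitly, whereas the paper simply states the pairs and asserts they work.
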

\begin{proof} (i) Let $k=2q+1$. In $J$, pair $\{p+3i-2,p+3(q+i)-1\}$ for $i\in [1,q+1]$ and $\{p+3i-1,p+3(q+1+i)-2\}$ for $i\in [1,q]$.   In $J'$, pair $\{p+3i-1,p+3(q+i)+1\}$ for $i\in [1,q+1]$ and $\{p+3i+1,p+3(q+1+i)-1\}$ for $i\in [1,q]$. These pairings achieve the desired results; see Figure \ref{fig-9} (1) for example.

 (ii) Let $k=2q$. In $J$, pair $\{p+1,p+2\}$, pair  $\{p+3(i+1)-2,p+3(q+i)-1\}$ for $i\in [1,q]$ and pair $\{p+3(i+1)-1, p+3(q+1+i)-2\}$ for $i\in [1,q-1]$. In $J'$,  pair $\{p+2,p+4\}$, pair  $\{p+3(i+1)-1,p+3(q+i)+1\}$ for $i\in [1,q]$ and pair $\{p+3(i+1)+1,p+3(q+1+i)-1\}$ for $i\in [1,q-1]$. These pairings achieve the desired results; see Figure \ref{fig-9} (2) for example.
\end{proof}

\begin{figure}[htbp]
 \centering
 %\psfrag{a}{(a)}
 %\psfrag{b}{(b)}
 \includegraphics[height=3.8cm]{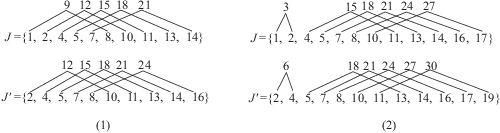}
\caption{\label{fig-9} (1) The  pairings of $J$ and $J'$ when $p=0$ and $k=5$, such that the pair sums are continuous 0-sums; (2) The  pairings of $J$ and $J'$ when $p=0$ and $k=6$, such that the pair sums are almost continuous 0-sums.}
\end{figure}

\section{Proof of the main result}
 Let $G=(V,E)$ be a  bipartite graph  with {minimum degree at least 15}. The proof of the antimagicness of $G$ divides  into two   parts. The first   part  is to construct  a partition of $V$,  and a decomposition of $E$,   where we       find  a special    subset of  $E$ of a certain size that   induces a subgraph without Eulerian    components. The second part     is  to carefully   define  an antimagic     labeling for   $G$, where some    vertices receive pairwise distinct 0-vertex sums, while others receive  pairwise distinct $\{1,2\}$-vertex sums, based on the partitions of $V$ and  $[|E|]$, and the decomposition of $E$.

\begin{proof}[\textbf{Proof of Theorem \ref{thm1}}]   Consider a bipartite graph $G=(A\cup B,E)$    of size $m$ with {minimum degree at least 15}. First, we give a vertex partition and edge decomposition of $G$ as follows.

\vspace{2mm} \noindent\textbf{1. The partition of $A\cup B$, and the decomposition of $E$.}

\vspace{2mm}\noindent \textbf{1.1. The definitions  of $X, Y, Z,  I_1$ in $G$, and that of  $Y_{{\rm odd}}, Y_{{\rm even}}, n_X, n_Y, n^{{\rm odd}}_{Y}$ and $n^{{\rm even}}_{Y}$ in $G[X,Y]$.}

\vspace{2mm}Suppose $X\cup Y$  is the partition of $A\cup B$ described in  Lemma \ref{lem1}, where $Y$ is an independent set and  there exists a matching   $M$  in $G[X,Y]$ that saturates $X$. Define $Z\subseteq Y$ as  the set of   vertices that are not saturated by $M$. Denote by $I_1$    the isolated vertex set   in $G[X]$. Let $Y_{{\rm odd}}$ (resp. $Y_{{\rm even}}$)   represent the set of vertices in $Y$ with  odd degree (resp. even degree). See Figure \ref{fignewnew1}.
\begin{figure}[htbp]
 \centering
 %\psfrag{a}{(a)}
 %\psfrag{b}{(b)}
 \includegraphics[height=3.7cm]{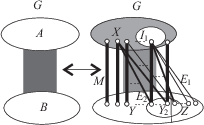}
\caption{\label{fignewnew1}    The vertex partitions of $A\cup B$ to $X\cup Y$,   where the matching $M$ (the thickest edges) saturates $X$ and $Y\setminus Z$, the edge set $E_1$ (the thinnest edges) exactly covers $Z$,   $E_2$ (the medium-thick edges) exactly  covers $Y_{{\rm even}}$, and $I_1$ is the isolated vertex set   in $G[X]$. White regions   represent independent sets.}
\end{figure}

 Denote by $n_X\triangleq|X|$, $n_Y\triangleq|Y|$,    $n^{{\rm odd}}_{Y}\triangleq|Y_{{\rm odd}}|$, and $n^{{\rm even}}_{Y}\triangleq|Y_{{\rm even}}|$. Clearly, $n_X\leq n_Y$.

 \vspace{2mm}\noindent \textbf{1.2. The definitions and findings of $E_1, E_2$ and $G_0$, in $G[X,Y]-M$.}

  \vspace{2mm} Let $E_1$ and $E_2$ be two   disjoint edge sets    in $G[X,Y]-M$, such that $E_1$  (resp. $E_2$) has a size $|Z|$ (resp. $|Y_{{\rm even}}|$) and covers each vertex in $Z$ (resp. $Y_{{\rm even}}$) exactly once; see Figure \ref{fignewnew1}.

Specially,  we choose $E_1$ and $E_2$ carefully, such that each vertex in $I_1$ has a positive degree in $G_0\triangleq G[X,Y]-(M\cup E_1\cup E_2)$, as follows.

 \begin{kst}
 \item[1)]    If  the set $I_0$ of  vertices in $I_1$ that are isolated in  $G_0$ is empty, then we are done.

 \item[2)]\vspace{2mm} If  $I_0\neq\emptyset$,  then we do some switchings  on $E_1\cup E_2$, such that $|I_0|$ decreases,   as follows.

  Let $x_1'$ be an arbitrary vertex in $I_0$.  By the definition of $I_0$, one has  $|(E_1\cup E_2)(x_1')|=d_G(x_1')-1\geq 14$. So there exists at least one edge $x_1'y'_1$ in  $E_1\cup E_2$.  Observe that   $d_{G_0}(y)\geq14$ which  is even for each $y\in Y$. So there exists  a   neighbor $x'_2$ of $y'_1$ in $G_0$ (then $x'_2\notin I_0$ by the definition of $I_0$).

  A  trail $T=x_1y_1x_2y_2\ldots x_ty_tx_{t+1}$ of   length $2t$   in $G[X,Y]-M$ is called an     even $(I_0,E_1\cup E_2, G_0)$-\emph{alternating} trail, if $x_1\in I_0$,  the edge set $\{x_iy_{i}|i\in[1,t]\}\subseteq E_1\cup E_2$, and  $\{y_ix_{i+1}|i\in[1,t]\}\subseteq E_{G_0}$; see Figure \ref{fignew1}, for example. By the discussions above,  $x_1'y'_1x_2'$ is an even $(I_0,E_1\cup E_2, G_0)$-{alternating} trail. So this definition is well-defined.

  \begin{figure}[htbp]
 \centering
 %\psfrag{a}{(a)}
 %\psfrag{b}{(b)}
 \includegraphics[height=3.5cm]{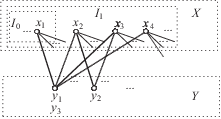}
\caption{\label{fignew1} The even $(I_0,E_1\cup E_2, G_0)$-{alternating} trail $T=x_{1}y_{1}x_{2}y_{2}x_{3}y_{3}x_{4}$, where $\{x_{1}y_{1},x_{2}y_{2},x_{3}y_{3}\}\subseteq E_1\cup E_2$,  and $\{y_{1}x_{2},y_{2}x_{3},y_{3}x_{4}\}\subseteq E_{G_0}$. Solid lines represent the edges of $E_1\cup E_2$, while dashed lines represent the edges of $E_{G_0}$.}
\end{figure}

  Let $\mathcal T=\{T_{1},T_{2},\cdots,T_{\iota}\}$ be the set of all even $(I_0,E_1\cup E_2, G_0)$-\emph{alternating} trails, where $T_{j}=x_{j,1}y_{j,1}x_{j,2}y_{j,2}\ldots x_{j,t_{j}}y_{j,t_{j}}x_{j,t_{j}+1}$ for $j\in[1,\iota]$. Let    $G_{\mathcal T}$ be the graph induced by $\{E_T|T\in \mathcal T\}$. Denote by $X_{\mathcal T}=V_{G_{\mathcal T}}\cap X$ and $Y_{\mathcal T}=V_{G_{\mathcal T}}\cap Y$.  By definition of $\mathcal T$,  $I_0=\{x_{j,1}|j\in [1,\iota]\}$, $X_{\mathcal T}\setminus I_0=\{x_{j,i}|j\in[1,\iota],i\in[2,t_{j}+1]\},$ and $Y_{\mathcal T}=\{y_{j,i}|j\in[1,\iota],i\in[1,t_{j}]\}.$

 Clearly, $(E_1\cup E_2)(I_0)\subseteq E_{G_{\mathcal T}}$ by definition. What is more, we show  that $(E_1\cup E_2)(X_{\mathcal T}\setminus I_0)\subseteq E_{G_{\mathcal T}}$ also holds, as follows. Suppose to the contrary that  there exist  $j\in[1,\iota]$,  $i\in[2,t_{j}+1]$, and an edge   $x_{j,i}y$ in  $(E_1\cup E_2)(x_{j,i})$, such that  $x_{j,i}y\notin E_{G_{\mathcal T}}$. Recall that $|(E_1\cup E_2)(y)|\leq2$ and    $d_{G_0}(y)\geq 14$. So the trail  $T_j$ goes through  $y$ at most once, and there exist  at least 13 edges incident to $y$ that are not in  $T_j$. Let $yx$ be arbitrary one of those 13 edges. Then by adding the two edges $x_{j,i}y, yx$ to  $T_j$, we obtain a new trail that is also an even $(I_0,E_1\cup E_2, G_0)$-{alternating} trail but is not contained by $\mathcal T$. This is a contradiction to the definition of $\mathcal T$. Therefore, for each $j\in[1,\iota]$ and   each $i\in[2,t_{j}+1]$,  one has that each   edge   in  $(E_1\cup E_2)(x_{j,i})$ is in   $E_{G_{\mathcal T}}$. That is, $(E_1\cup E_2)(X_{\mathcal T}\setminus I_0)\subseteq E_{G_{\mathcal T}}$.  Thus, \begin{flalign*}  \ \ \ \ \ \ \ \ \ \ \ \ \ \ \ \ (E_1\cup E_2)(X_{\mathcal T})\subseteq E_{G_{\mathcal T}}. &&&\end{flalign*}
  Then each edge in $(E_1\cup E_2)(X_{\mathcal T})$ is incident to some vertex in $Y_{\mathcal T}$. This   implies that $(E_1\cup E_2)(X_{\mathcal T})\subseteq (E_1\cup E_2)(Y_{\mathcal T})$, and so
  \begin{flalign} \label{xinxin1} \ \ \ \ \ \ \ \ \ \ \ \ \ \ \ \ |(E_1\cup E_2)(X_{\mathcal T})|\leq |(E_1\cup E_2)(T_{\mathcal T})|.&&&\end{flalign}

 If for each    $j\in[1,\iota]$ and each $i\in[2,t_{j}]$, one has that $x_{j,i}\in (I_1\setminus I_0)$ and $d_{G_0}(x_{j,i})=1$ (that is, $x_{j,i}$  has  a unique neighbor $y_{j,i-1}$ in $Y_{\mathcal T}$ in $G_0$, and so $|E_1\cup E_2(x_{j,i})|\geq 13$),  then \begin{flalign}\label{xinxin2} \ \ \ \ \ \ \ \ \ \ \ \ \ \ \ \ |Y_{\mathcal T}|\leq |X_{\mathcal T}\setminus I_0|.&&&\end{flalign}
Recall that  for each $y_{j,i}$ in $Y_{\mathcal T}$,  $|(E_1\cup E_2)(x_{j,i})|\leq 2$.  It follows from (\ref{xinxin2}) that \begin{flalign*}  \ \ \ \ \ \ \ \ \ \ \ \ \ \ \ \ |(E_1\cup E_2)(Y_{\mathcal T})|\leq 2|Y_{\mathcal T}|\leq 2|X_{\mathcal T}\setminus I_0|<13|X_{\mathcal T}\setminus I_0|<13|X_{\mathcal T}|\leq|(E_1\cup E_2)(X_{\mathcal P})|.&&&\end{flalign*}
This   contradicts to (\ref{xinxin1}). So there exists some    $j\in[1,\iota]$ and some $i\in[2,t_{j}+1]$, such that  $d_{G_0}(x_{j,i})\geq2$ or $x_{j,i}\notin (I_1\setminus I_0)$ (that is,  $x_{j,i}\notin I_1$ since $x_{j,i}\notin I_0$).

 Now, switch  $E_1\cup E_2$ as follows: delete $\{x_{j,i}y_{j,i}|i\in[1,i-1]\}$ from $E_1\cup E_2$ and add
  $\{y_{j,i}x_{j,i+1}|i\in[1,i-1]\}$ to $E_1\cup E_2$.
 See Figures \ref{fignew2} and \ref{fignew3}, for example. Now $x_{j,i}$ is not isolated in     $G[X,Y]-(M\cup E_1\cup E_2)$ anymore, and so the   number of isolated vertices in  $G[X,Y]-(M\cup E_1\cup E_2)$ decreases.  By a similar discussion, we can inductively  switch $E_1\cup E_2$ until $I_0$ becomes empty.   \end{kst}
 \begin{figure}[htbp]
 \centering
 %\psfrag{a}{(a)}
 %\psfrag{b}{(b)}
 \includegraphics[height=3.5cm]{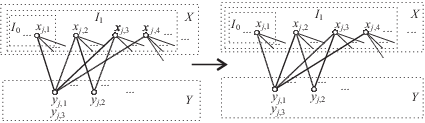}
\caption{\label{fignew2} An example for the switching of   $E_1\cup E_2$ when   $d_{G_0}(x_{j,4})\geq2$, in the even $(I_0,E_1\cup E_2, G_0)$-{alternating} trail $T_j=x_{j,1}y_{j,1}x_{j,2}y_{j,2}x_{j,3}y_{j,3}x_{j,4}$. Solid lines represent the edges of $E_1\cup E_2$.}
\end{figure}
\begin{figure}[htbp]
 \centering
 %\psfrag{a}{(a)}
 %\psfrag{b}{(b)}
 \includegraphics[height=3.5cm]{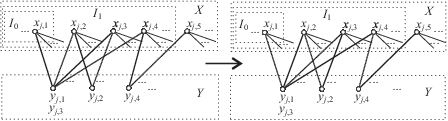}
\caption{\label{fignew3} The switching of  $E_1\cup E_2$  when $x_{j,5}\notin I_1$, in the even $(I_0,E_1\cup E_2, G_0)$-{alternating} trail $T_j=x_{j,1}y_{j,1}x_{j,2}y_{j,2}x_{j,3}y_{j,3}x_{j,4}y_{j,4}x_{j,5}$. Solid lines represent the edges of $E_1\cup E_2$.}
\end{figure}

 Thus, we have  successfully found the edge sets $E_1$ and $E_2$, ensuring that each vertex in $I_1$ has a positive degree in $G_0$.

\vspace{2mm} \noindent
\textbf{1.3. The definitions of $I_2, I_{2,1}, G_1, m_1$ in $G_0$,   that of $F_i,  k_i\ (i\in[1,3]), m_2$ in $G[X]$, and that of $l_\mu, l_{1,\mu}, l_{2,\mu}\ (\mu=0,1,2), O$ and $J$.}

\vspace{2mm}Let    $I_2$   be the isolated   vertex set  in $G_0$,   $I_{2,1}\subseteq I_2$ be the isolated vertex set  in $G[I_2]$, and
\begin{flalign*}  \ \ \ \ \ \ \ \ G_1\triangleq G_0[X\setminus I_2, Y];&&&\end{flalign*} see   Figure \ref{fignewnew2}.
 \begin{figure}[htbp]
 \centering
 %\psfrag{a}{(a)}
 %\psfrag{b}{(b)}
 \includegraphics[height=3.7cm]{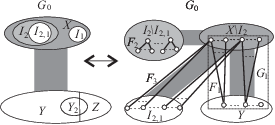}
\caption{\label{fignewnew2}     The vertex partitions  and edge decompositions of $G_0$, where each of  $F_1, F_2, F_3$ consists of star components. White regions   represent independent sets.}
\end{figure}

One has that $I_2\subseteq (X\setminus I_1)$ as $I_0=\emptyset$;   each vertex in $I_{2,1}$ is adjacent  to some vertex in  $X\setminus I_2$ by the definition of $I_{2,1}$. For each vertex $v$ in $X\setminus I_2$, $v$ has a neighbor $v'$ in $Y$ in $G_1$ by the definition of $I_2$. Let
 \begin{flalign*}  \ \ \ \ \ \ \ \ F_1=G[\{vv'|v\in X\setminus I_2\}].&&&\end{flalign*}
   Then $F_1$ is  a  forest of size $|X\setminus I_2|$  in $G_1$   that covers  each vertex in $X\setminus I_2$ exactly once, and  $F_1$ consists of  star components. See   Figure \ref{fignewnew2}.

    By definition of  $I_{2,1}$,  $G[I_2\setminus I_{2,1}]$ has no isolated vertex. Let
    $F'$ be  a forest in $G[I_2\setminus I_{2,1}]$ that covers each vertex in  $I_2\setminus I_{2,1}$. If  every edge in $F'$ is incident to some leaf, then let $F_2=F'$. If there exist edges that are not incident to any leaf, then step by step,
    we check and delete such edges, until we obtain a forest $F_2$  covering each vertex in  $I_2\setminus I_{2,1}$ and with  every edge in $F_2$ is incident to some leaf. Then      $F_2$ also  consists of     star components. See   Figure \ref{fignewnew2}.

Recall that each vertex in $I_{2,1}$ is adjacent  to some vertex in  $X\setminus I_2$ by the definition of $I_{2,1}$. For each vertex $v$ in  $I_{2,1}$, let $v'$ be a neighbor of $v$ in  $X\setminus I_2$. Let
 \begin{flalign*}  \ \ \ \ \ \ \ \ F_3=G[\{vv'|v\in I_{2,1}\}].&&&\end{flalign*} Then   $F_3$ is a   forest of size $|I_{2,1}|$ that covers each vertex in  $I_{2,1}$ exactly once and $F_3$ also  consists of star components.   See   Figure \ref{fignewnew2}.

  Let $k_i\triangleq|E_{F_i}|$ for $i=1,2,3$.
 Then one has \begin{flalign}\label{xeq4}  \ \ \ \ \ \ \ \
    \left\{
   \begin{array}{ll}k_1+k_2+k_3\leq n_X-1,\vspace{1mm}\\
   {k_2+k_3\leq |I_2|-1\leq n_X-15,}
   \end{array}
  \right.
 &&&\end{flalign}
   since
 $E_{F_1}\cup E_{F_2}\cup E_{F_3}$ induces a forest in $G[X]$, $E_{F_2}\cup E_{F_3}$ induces a forest in $G[I_2]$ and $|X\setminus I_2|\geq14$ {(note that $\delta_{G_1}(Y)=\delta_{G_0}(Y)\geq14$)}.
   For convenience, let $m_1\triangleq |E_{G_1}|$ and $m_2\triangleq|E_{G[X]}|$. Clearly, one has \begin{flalign*}  \ \ \ \ \ \ \ \ m=n_Y+n^{{\rm even}}_{Y}+m_1+m_2,&&&\end{flalign*}  since $E_G=(M \cup E_{1}) \cup E_{2}\cup E_{G_1}\cup E_{G[X]}$; and one has
   \begin{flalign}  \ \ \ \ \ \ \ \ \label{xeq2}m_1\geq14 n_Y,&&&\end{flalign}
   since   $\delta_{G_1}(Y)\geq14$.
  For $\mu=0,1,2$,  let $l_\mu$ be the number of $\mu$-labels in
 $[m]$,  $l_{1,\mu}$ be the number of $\mu$-labels in
 $[n_Y+n^{{\rm even}}_{Y}+m_1]$, and $l_{2,\mu}=l_\mu-l_{1,\mu}$. Let
$O$ be the set of 0-labels and $J$ be the set of $\{1,2\}$-labels in the set
$[m]$.  One has
  \begin{flalign}\label{xeq3}  \ \ \ \ \ \ \ \
    \left\{
   \begin{array}{ll}
|J|=\lceil\frac{2m}{3}\rceil =l_1+l_2=(l_{1,1}+l_{2,1})+(l_{1,2}+l_{2,2}),\vspace{1mm}  \\
  \lceil\frac{2(n_Y+n^{{\rm even}}_{Y}+m_1)}{3}\rceil =l_{1,1}+l_{1,2}. \end{array}
  \right.
  &&&\end{flalign}

\vspace{2mm}\noindent
\textbf{1.4.  The definition and finding of $G_3$ in $G[X]-(E_{F_2}\cup E_{F_3})$, and the definitions  of  $\varepsilon_1, E_3, m_{2,1}$ and $\gamma$ in $G[X]$.}

\vspace{2mm} We find an even subgraph $G_3$ in $G[X]-(E_{F_2}\cup E_{F_3})$ as follows.
\begin{kst}
 \item  If $k_2+k_3> l_{2,1}+l_{2,2}$, then let $G_3$ be an empty graph on  $X$.

   \item \vspace{2mm}   If $k_2+k_3\leq l_{2,1}+l_{2,2}$, then we construct   $G_3$ as follows.
     \begin{kst}
     \item[1)] If there is no cycle in  $G[X]-(E_{F_2}\cup E_{F_3})$, or each cycle in  $G[X]-(E_{F_2}\cup E_{F_3})$  has length greater than $(l_{2,1}+l_{2,2})-(k_2+k_3)$, then let $G_3$ be  an empty graph on $X$; otherwise, let $C_1$ be a cycle of length at most   $(l_{2,1}+l_{2,2})-(k_2+k_3)$.
        \item[2)]\vspace{2mm} Step by step, for $i\geq2$,  if there is no cycle in  $G[X]-[E_{F_2}\cup E_{F_3}\cup(\cup_{j\leq i-1}E_{C_j})]$, or each cycle in  $G[X]-[E_{F_2}\cup E_{F_3}\cup(\cup_{j\leq i-1}E_{C_j})]$  has length greater than $(l_{2,1}+l_{2,2})-(k_2+k_3+\sum_{j\leq i-1}|E_{C_{i}}|)$, then let $G_3$ be the graph induced by $\cup_{j\leq i-1}E_{C_j}$; otherwise, let $C_i$ be a cycle of length at most   $(l_{2,1}+l_{2,2})-(k_2+k_3+\sum_{j\leq i-1}|C_{i}|)$.
     \end{kst}That is, we find   an even graph $G_3$ with the maximum size  in $G[X]-(E_{F_2}\cup E_{F_3})$  such that   $k_2+k_3+|E_{G_3}|\leq l_{2,1}+l_{2,2}$.  \end{kst}
See Figure \ref{fignewnew3} (1).

Let   $E_3=E_{G_3}\cup E_{F_2}\cup E_{F_3}$. See Figure \ref{fignewnew3} (2).

\begin{figure}[htbp]
 \centering
 %\psfrag{a}{(a)}
 %\psfrag{b}{(b)}
 \includegraphics[height=4.1cm]{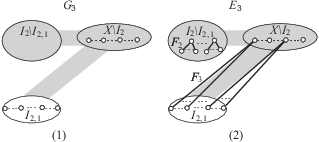}
\caption{\label{fignewnew3}  (1)  The even graph $G_3$ in  $G[X]-(E_{F_2}\cup E_{F_3})$ with a certain size   ($G_3$ may be empty). (2)  The edge set $E_3=E_{G_3}\cup E_{F_2}\cup E_{F_3}$ with certain size. }
\end{figure}

      Let $m_{2,1}\triangleq|E_3|$ and $\varepsilon_1\triangleq|E_{G_3}|$ (so $\varepsilon_1=0$ in the case when $k_2+k_3> l_{2,1}+l_{2,2}$).  Then $m_{2,1}=\varepsilon_1+k_2+k_3$. Denote by   \begin{flalign}  \ \ \ \ \ \ \ \ \label{xeq7} \gamma\triangleq (l_{2,1}
          +l_{2,2})-m_{2,1}. &&&\end{flalign}

      \begin{kst}

       \item  In the case when $k_2+k_3> l_{2,1}+l_{2,2}$, one has that  \begin{flalign} \ \ \ \ \ \ \ \ \ \ \ \ \ \ \ \ \label{new3} m_{2,1}=0+k_2+k_3\leq n_X-15.&&&\end{flalign} Furthermore, in this case,  \begin{flalign*} \ \ \ \ \ \ \ \ \ \ \ \ \ \ \ \ m_{2,1}\leq m_{2}=l_{2,0}+l_{2,1}+l_{2,2},&&&\end{flalign*}
   implying that
   \begin{flalign*} \ \ \ \ \ \ \ \ \ \ \ \ \ \ \ \ \gamma=(l_{2,1}
          +l_{2,2})-m_{2,1}\geq -l_{2,0}\geq -\frac{l_{2,1}+l_{2,2}}{2}> -\frac{k_2+k_3}{2}\geq-\frac{n_X-15}{2} >-\frac{n_X}{2}, &&&\end{flalign*}
   by  (\ref{new3}). So
    \begin{flalign} \ \ \ \ \ \ \ \ \ \ \ \ \ \ \ \ \label{new4} -\frac{n_X}{2}<\gamma<0,\ {\rm when}\ k_2+k_3> l_{2,1}+l_{2,2}.&&&\end{flalign}

     \item  \vspace{2mm} In the case when $k_2+k_3\leq l_{2,1}+l_{2,2}$, if $m_{2,1}\leq l_{2,1}+l_{2,2}-n_X$, then  the number of edges in $G[X]-E_3$ are more than $n_X$. This implies there  exists a cycle $C$ in $G[X]-E_3$ of length    at most $n_X$ since $|X|=n_X$. Then  $E_{G_{3}}\cup E_C$ induces an even graph and $m_{2,1}<m_{2,1}+|E_C|\leq l_{2,1}+l_{2,2}$, a contradiction to the choice  of $E_3$. Thus, \begin{flalign*} \ \ \ \ \ \ \ \ \ \ \ \ \ \ \ \  m_{2,1}> l_{2,1}+l_{2,2}-n_X,\ {\rm when}\ k_2+k_3\leq l_{2,1}+l_{2,2}.&&&\end{flalign*}
         So
         \begin{flalign} \ \ \ \ \ \ \ \ \ \ \ \ \ \ \ \ \label{new2} 0\leq\gamma=(l_{2,1}
          +l_{2,2})-m_{2,1}< n_X,\ {\rm when}\ k_2+k_3\leq l_{2,1}+l_{2,2}.&&&\end{flalign}
\end{kst}
From   (\ref{new4}) and (\ref{new2}), we conclude that
          \begin{flalign}  \ \ \ \ \ \ \ \ \label{xeq8}
             -\frac{n_X}{2}<\gamma< n_X.
 &&&\end{flalign}

\vspace{2mm}\noindent
\textbf{1.5.  The definition  and finding  of $E_4$ in $G_1$,    and the  definitions of $m_{1,1}$, $G_4$, and $Y^{odd}_{G_4}$ in $G_0[X\setminus I_2, Y\cup I_{2,1}]$.}

 \vspace{2mm}  Denote by
   \begin{flalign*}   \ \ \ \ \ \ \ \ m_{1,1}\triangleq \lceil\frac{2m}{3}\rceil-m_{2,1}.&&&\end{flalign*} By (\ref{xeq3}) and (\ref{xeq7}), one has that
          \begin{flalign}\label{xeq6}  \ \ \ \ \ \ \ \
        m_{1,1}=(l_{1,1}
          +l_{1,2})+(l_{2,1}
          +l_{2,2}-m_{2,1})
          =\lceil\frac{2(m_1+n_Y+n^{{\rm even}}_{Y})}{3}\rceil+\gamma. &&&\end{flalign}
           Then on one hand, by (\ref{xeq8}), (\ref{xeq6}), and the fact that $\max\{n_X,n^{{\rm even}}_{Y}\}\leq n_Y$,
  \begin{flalign*}  \ \ \ \ \ \ \ \ m_{1,1}
          \leq\frac{2(m_1+n_Y+n^{{\rm even}}_{Y})+2}{3}+n_X-1 <\frac{2m_1}{3}+\frac{7n_Y}{3}.&&&\end{flalign*}
  It follows from (\ref{xeq2}) that
           \begin{flalign} \ \ \ \ \ \ \ \ \label{xinnew1} {m_1-m_{1,1}> m_1-(\frac{2m_1}{3}+\frac{7n_Y}{3})=\frac{m_1}{3}-\frac{7n_Y}{3}\geq \frac{7n_Y}{3}>2n_Y.}&&&\end{flalign}
         So
           \begin{flalign} \label{xeq9} \ \ \ \ \ \ \ \ m_{1,1}<m_1=|E_{G_1}|.&&&\end{flalign}
                  On the other hand, by (\ref{xeq8}) and (\ref{xeq6}),
           \begin{flalign}  \ \ \ \ \ \ \ \ \label{xeq10} m_{1,1}
    >{\lceil\frac{2(m_1+n_Y+n^{\rm even}_Y)}{3}\rceil-\frac{n_X}{2}\geq 9n_Y+7}.&&&\end{flalign} Then by (\ref{xeq4}),
      \begin{flalign} \label{xihu}  \ \ \ \ \ \ \ \ m_{1,1}-|E_{F_1}|=m_{1,1}-k_1
    >{9n_Y-n_X\geq 8n_Y}.&&&\end{flalign}
Now, by (\ref{xeq9}),   (\ref{xihu}) and the fact that $d_{G_0}(y)$ is even for each $y$ in $Y$, we can construct  an edge set $E_4$ of size $m_{1,1}$ in the following steps.  \begin{kst}
     \item[1)]  Let $E_{F_1}$ be included in $E_4$.

     \item[2)] \vspace{2mm} For each vertex $y$ in $Y$, if $d_{F_1}(y)$ is odd, then choose one  edge  in $E_{G_1}\setminus E_{F_1}$ that is incident to $y$ to $E_4$.

         \item[3)] \vspace{2mm} For each vertex $y$ in $Y$, if $|E_4(y)|\leq6$, then choose  $8-|E_4(y)|$ edges in $E_{G_1}\setminus E_4$ that are incident to $y$, and add them to $E_4$.

              \item[4)] \vspace{2mm} If $m_{1,1}$ is even, then choose $\frac{m_{1,1}}{2}-4n_Y$ $X$-links in $E_{G_1}\setminus E_4$, and add them to $E_4$. If   $m_{1,1}$ is odd, then choose $\frac{m_{1,1}-1}{2}-4n_Y$ $X$-links and one edge in $E_{G_1}\setminus E_4$,  and add them to $E_4$.
          \end{kst}
           That is, we have found an edge set $E_4$   of size  $m_{1,1}$ in $G_1$ that   includes  $E_{F_1}$ and satisfies the following two constrains:
    \begin{kst}
     \item[(C1)]  for   each $y$  in $Y$,    $|E_4(y)|\geq8$;

     \item[(C2)] \vspace{1mm}   for each $y$ but at most one (supposed to be $y'$ when exists) in $Y$,   $|E_4(y)|$ is even.
          \end{kst}
 See Figure \ref{fignewnew4} (1).

  \begin{figure}[htbp]
 \centering
 %\psfrag{a}{(a)}
 %\psfrag{b}{(b)}
 \includegraphics[height=4.1cm]{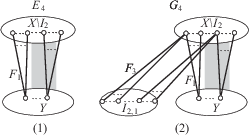}
\caption{\label{fignewnew4}   (1) The edge set $E_4\subseteq E_{G_1}$ of size $m_{1,1}= \lceil\frac{2m}{3}\rceil-m_{2,1}$ that contains $E_{F_1}$ and satisfies (C1) and (C2). (2)   The graph $G_4$ induced by $E_4\cup E_{F_2}$.}
\end{figure}

Let $G_4$ be the graph induced by  $E_4\cup E_{F_3}$;   see Figure \ref{fignewnew4} (2). Let $Y^{odd}_{G_4}$ be the odd-degree vertex set  within $Y$ in $G_4$. One has $Y^{odd}_{G_4}=\emptyset$ or $\{y'\}$ by our definition of $E_4$. The   claim below is to ensure that $G_4$ meets the conditions of    Lemma \ref{lem6}.

\begin{claim} \label{claim2}  There is a choice of $E_4$ such that the following  hold.  \begin{kst}\item[(i)] There is no Eulerian component in    $G_4$.
 \item[(ii)]\vspace{2mm}    If $I_{2,1}\cup Y^{odd}_{G_4}\neq\emptyset$, then there exists a component   in
$G_4$ that admits an odd-degree vertex in each of     $I_{2,1}\cup Y^{odd}_{G_4}$ and  $X\setminus I_2$.\end{kst}
\end{claim}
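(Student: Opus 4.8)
\medskip
\noindent\emph{Proof plan.} I will choose $E_4$ — among all edge sets of size $m_{1,1}$ that contain $E_{F_1}$ and satisfy (C1) and (C2) — so that $G_4$ has both properties, starting from an $E_4$ produced by steps 1)--4) and repairing it by moves that keep these constraints. First a structural translation. Since $F_3$ is a union of stars centred in $X\setminus I_2$ whose leaves are the vertices of $I_{2,1}$, deleting from $G_4$ the degree-one vertices of $I_{2,1}$ leaves exactly the graph $H$ spanned by $E_4$ (which has no isolated vertex, as $E_{F_1}$ covers $X\setminus I_2$), and every component of $G_4$ is a component of $H$ with its $I_{2,1}$-leaves reattached. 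The degrees of $G_4$ are: $d_{G_4}(y)$ even and $\ge 8$ for every $y\in Y$, except possibly one vertex $y'$ of odd degree, present exactly when $m_{1,1}$ is odd; $d_{G_4}(v)=1$ for $v\in I_{2,1}$; and $d_{G_4}(x)=|E_4(x)|+d_{F_3}(x)$ for $x\in X\setminus I_2$, where $|E_4(x)|\ge d_{F_1}(x)=1$, so any $x$ with $|E_4(x)|=1$ and $d_{F_3}(x)=0$ is a leaf of $G_4$. Hence a component $C$ of $G_4$ is Eulerian iff it meets neither $I_{2,1}$ nor $\{y'\}$ and $|E_4(x)|+d_{F_3}(x)$ is even for every $x\in C$; so (i) holds once every component of $H$ contains $y'$ or a vertex $x\in X\setminus I_2$ with $|E_4(x)|+d_{F_3}(x)$ odd, and (ii) asks in addition for one such component to meet $I_{2,1}\cup\{y'\}$.

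Property (ii) is partly automatic: if $m_{1,1}$ is odd, the $G_4$-component of $y'$ has an even number of odd-degree vertices, one of them $y'$, and as $y'$ is the unique odd vertex of $Y$ any further one lies in $I_{2,1}$ or in $X\setminus I_2$, so that component witnesses (ii) unless it meets $I_{2,1}$ but carries no odd vertex of $X\setminus I_2$ — which is the same difficulty as the case $I_{2,1}\ne\emptyset$. For the latter $F_3$ is nonempty, and since $I_2\subseteq X\setminus I_1$ every $v\in I_{2,1}$ has a neighbour in $X\setminus I_2$; using the freedom in choosing the forests $F_1,F_3$ and the extra edges of $E_4$ one arranges that some $F_3$-centre $x_0$ satisfies $|E_4(x_0)|+d_{F_3}(x_0)$ odd, and then the $G_4$-component of $x_0$ (which contains the $I_{2,1}$-leaves attached to $x_0$) witnesses (ii).

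It remains to secure (i) compatibly with this. I repair $G_4$ using two moves that preserve $|E_4|$, $E_{F_1}\subseteq E_4$, (C1) and (C2): a \emph{$2$-switch}, replacing $\{x_1y_1,x_2y_2\}\subseteq E_4\setminus E_{F_1}$ by $\{x_1y_2,x_2y_1\}\subseteq E_{G_1}\setminus E_4$, which keeps every vertex degree — hence every parity — fixed but may merge the components of $x_1y_1$ and $x_2y_2$; and a \emph{parity move} at a vertex $y$ with $d_{G_4}(y)\ge10$, deleting an $X$-link of $E_4\setminus E_{F_1}$ at $y$ and adding an $X$-link of $E_{G_1}\setminus E_4$ (at $y$ or elsewhere), or deleting a non-$F_1$ edge $x_1y\in E_4$ and adding an unused edge $x_0y\in E_{G_1}\setminus E_4$ at the same $y$, which flips the parity of two prescribed vertices of $X\setminus I_2$ while keeping every $d_{G_4}(y)$ even and $\ge8$. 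The slack needed is supplied by $|E_{G_1}\setminus E_4|=m_1-m_{1,1}>2n_Y$ (by (\ref{xinnew1})) and by $\delta_{G_1}(Y)\ge14$, which forces every component of $G_1$ to contain at least $14$ vertices of $X\setminus I_2$, forces any $y$ all of whose $G_1$-edges lie in $E_4$ to have $d_{G_4}(y)\ge14$, and (together with $m_{1,1}>9n_Y$) guarantees some $y$ with $d_{G_4}(y)\ge10$. Now take an admissible $E_4$ — with the forests fixed as above — minimising the number of Eulerian components of $G_4$; if an Eulerian component $C$ remains, then $C\subseteq(X\setminus I_2)\cup Y$ with all degrees even and with its $Y$-vertices of degree $\ge8$, and one of the following reduces the count, a contradiction: a $2$-switch merges $C$ with a component carrying an odd vertex (if one exists and some $G_1$-edge crosses out of $C$); a parity move inside $C$ creates two odd vertices of $X\setminus I_2$ there (if $C$ is a union of fully-used $G_1$-components, where every $y\in C$ has $d_{G_4}(y)\ge14$); or, if $G_4$ is an even graph, a parity move at a vertex of degree $\ge10$ turns one component non-Eulerian. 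Hence $G_4$ has no Eulerian component, so (i) and (ii) hold.

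\medskip
\noindent I expect the genuine obstacle to be the ``fully-used union of $G_1$-components'' case: there $2$-switches are useless because no edge of $G_1$ leaves $C$, so odd vertices of $X\setminus I_2$ have to be manufactured inside $C$ by local edge exchanges, and one must check that such exchanges always exist without violating (C1) or (C2), and — since both the (i)-repairs and the (ii)-construction flip $X$-parities — that they can be carried out simultaneously; that joint bookkeeping is where nearly all the work sits.
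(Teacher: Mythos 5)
Your plan shares the paper's extremal philosophy (pick $E_4$ optimizing a count of bad components, then derive a contradiction by a local repair), but as written it has genuine gaps precisely where you yourself flag "the joint bookkeeping." First, in the hard case where an Eulerian component $C$ is a union of fully-used $G_1$-components, your parity move must delete an $X$-link $\{yu_1,yu_2\}$ from $C$ and add two edges elsewhere. You do not address two dangers that the paper spends most of its proof on: (a) deleting two edges at $y$ may disconnect $C$, and one of the resulting pieces could itself be Eulerian, so the count need not drop — the paper invokes its Lemma \ref{lem4} (using $d_H(y)=d_{G_1}(y)\geq 14$, hence at least three non-cut edges at $y$) to choose $yu_1,yu_2$ keeping $H-\{yu_1,yu_2\}$ connected; and (b) the two edges added at some vertex $y^\ast$ with $|(E_{G_1}\setminus E_4)(y^\ast)|\geq 3$ could merge components into a \emph{new} Eulerian component — the paper rules this out by a case analysis on the parities of $x_1,x_2,x_3$ in their own components, showing at least one of $Q_{x_1,x_2},Q_{x_1,x_3},Q_{x_2,x_3}$ is non-Eulerian, which is why three candidate edges (not two) are needed. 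Your $2$-switch has an additional existence problem: the replacement edges $x_1y_2,x_2y_1$ need not lie in $E_{G_1}$ at all, so that move is not always available; the paper only ever adds edges at a single vertex $y^\ast$ guaranteed to have spare incident edges by $m_1-m_{1,1}>2n_Y$.

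Second, your treatment of (ii) is essentially an assertion: "one arranges that some $F_3$-centre $x_0$ satisfies $|E_4(x_0)|+d_{F_3}(x_0)$ odd." Flipping the parity of a chosen $x_0$ requires changing $|E_4(x_0)|$ by an odd amount, which disturbs parities at the $Y$-endpoints of the edges involved and must be reconciled with (C1), (C2) and $|E_4|=m_{1,1}$; moreover the component of $x_0$ need not contain a vertex of $I_{2,1}\cup Y^{odd}_{G_4}$ even if $x_0$ is an $F_3$-centre after your repairs for (i) have moved edges around. The paper instead makes (ii) part of the extremal choice (maximize $|\mathcal H_3|$ subject to $|\mathcal H_1|$ minimum) and runs a second surgery argument with its own case split (at least three cut edges at $y$ versus at most two), again leaning on Lemma \ref{lem4} and the $Q_{x_i,x_j}$ device, and additionally verifying after each swap that no new Eulerian component appears and that the witnessing component really meets both $I_{2,1}\cup Y^{odd}_{G_4}$ and $X\setminus I_2$. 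Until these existence and non-degradation checks are supplied, the proposal is a roadmap rather than a proof.
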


\begin{subproof}  Let $\mathcal H_1$ be the set of  Eulerian components of $G_4$. Let $\mathcal H_2$ be the set of    components of $G_4$   that     include  a vertex in $I_{2,1}\cup Y^{odd}_{G_4}$. Then $\mathcal H_1\cap \mathcal H_2=\emptyset$. Let $\mathcal H_3\subseteq\mathcal H_2$ be the set of components   that    admit   an odd-degree vertex in $X\setminus I_2$. We choose $E_4$ such that   $|\mathcal H_3|$ is maximum subject to that   $|\mathcal H_1|$ is minimum.

Recall that {$m_1-m_{1,1}>2n_Y$} from (\ref{xinnew1}). So
 there exists a
vertex $y^\ast$ in $Y$ with $|(E_{G_1}\setminus E_4)(y^\ast)|\geq3$.   Let $y^\ast x_1, y^\ast x_2, y^\ast x_3\in (E_{G_1}\setminus E_4)$. Suppose $H_0$, $H_1, H_2$ and   $H_3$ are the components  of $G_4$ that include $y^\ast$, $x_1,   x_2$ and  $x_3$, respectively.    For $\{i,j\}\subseteq\{1,2,3\}$, denote by $Q_{x_i,x_j}$ the connected graph induced by  $E_{H_0}\cup E_{H_i}\cup E_{H_j}\cup \{y^\ast x_i, y^\ast x_j\}$.
 \begin{kst}
 \item If      $x_i$ has  an even degree in $H_i$ for some   $i\in\{1,2,3\}$, then $x_i$ has an odd degree in $Q_{x_i,x_j}$ for each $j\in(\{1,2,3\}\setminus\{i\})$.

     \item \vspace{2mm} If $x_i$  has an odd degree in  $H_i$ (then there exists another odd-degree vertex $w_i$ in $H_i$) for each $i\in\{1,2,3\}$,   then

          \begin{kst}
          \item when   $H_j\neq H_k$   for some $j,k\in\{1,2,3\}$,      one has $w_j\neq x_k$, implying that $w_j$    has an   odd degree in $Q_{x_j,x_k}$;

               \item \vspace{2mm} when $H_1=H_2=H_3$,    $x_3$     has an   odd degree in $Q_{x_1,x_2}$.

                  \end{kst}
                  \end{kst}
                  So there is always a non-Eulerian   graph among  $Q_{x_1,x_2},Q_{x_1,x_3},Q_{x_2,x_3}$. Without loss of generality, suppose $Q_{x_1,x_2}$ is non-Eulerian.

\vspace{2mm}For (i), suppose to the contrary that $\mathcal H_1\neq \emptyset$ and let $H\in\mathcal H_1$. Denote by $X_H=V_H\cap I_1$ and  $Y_H=V_H\cap (Y\setminus Y^{odd}_{G_4})$ for short.  Let $yx\in E_H$ where $y\in Y_H$. Clearly, $yx$ is not a cut edge in $H$, since each Eulerian graph is 2-edge-connected.  If $d_{H}(y)< d_{G_1}(y)$,   let $yx'$ be an edge in
$G_1-E_4$ where  $x'$ is in some component $H'$  of $G_4$. Let ${\widehat{E}}_{4}=(E_4\setminus \{yx\})\cup \{yx'\}$  and     $\widehat{G}_{4}$ be the graph induced by $\widehat{E}_{4}\cup E_{F_3}$. Then $\widehat{E}_{4}$ satisfies (C1) and (C2); but $\widehat{G}_{4}$ has fewer Eulerian
components than $G_4$, since the graph induced by  $(E_{H}\setminus\{yx\})\cup E_{H'}\cup\{yx'\}$ is a non-Eulerian component in $\widehat{G}_{4}$,   a contradiction.
 So $d_{H}(y)=d_{G_1}(y)\geq14$. By Lemma \ref{lem4}, there exist  two edges   $yu_1,yu_2$ in $H$ such that $H-\{yu_1,yu_2\}$ is connected.
 Let $i_1\in\{1,2,3\}$ such that   $x_{i_1}\notin\{u_1,u_2\}$.
  \begin{kst}
  \item[1)]
  If none of $x_1,x_2,x_3$ is in $V_H$, let  $\widehat{E}_{4}=(E_4\setminus \{yu_1,yu_2\})\cup \{y^\ast x_1, y^\ast x_2\}$  and    $\widehat{G}_{4}$ be the graph induced by $\widehat{E}_{4}\cup E_{F_3}$. Then $\widehat{E}_{4}$ satisfies (C1) and (C2)  since $d_{\widehat{G}_{4}}(y)\geq14-2>8$; but  $H-\{yu_1,yu_2\}$ and $Q_{x_1,x_2}$ are two distinct non-Eulerian   components in  $\widehat{G}_{4}$, implying that $\widehat{G}_{4}$ has fewer Eulerian
components than $G_4$, a contradiction.
  \item[2)] \vspace{2mm} If one of  $x_1,x_2,x_3$ is in $V_H$, then let $i_2\in\{1,2,3\}\setminus\{i_1\}$ such that either $x_{i_1}$ or $x_{i_2}$ is in $V_H$. Let    $\widehat{E}_{4}=(E_4\setminus \{yu_1,yu_2\})\cup \{y^\ast x_{i_1}, y^\ast x_{i_2}\}$  and    $\widehat{G}_{4}$ be the graph induced by $\widehat{E}_{4}\cup E_{F_3}$. Then $\widehat{E}_{4}$ satisfies (C1) and (C2);  but   $Q_{x_{i_1},x_{i_2}}$  includes all edges in  $H$, and
  $Q_{x_{i_1},x_{i_2}}-\{yu_1,yu_2\}$   is  a non-Eulerian   component in $\widehat{G}_{4}$,   since at least one of $u_1, u_2$ (recall $x_{i_1}\notin\{u_1,u_2\}$ and so $\{x_{i_1},x_{i_2}\}\neq\{u_1,u_2\}$) has  an odd degree   in $Q_{x_{i_1},x_{i_2}}-\{yu_1,yu_2\}$, implying that $\widehat{G}_{4}$ has fewer Eulerian
components than $G_4$, also a contradiction.
\end{kst}
    Thus,  $\mathcal H_1=\emptyset$ and  (i) holds.

\vspace{2mm}For (ii), suppose to the contrary that $\mathcal H_3=\emptyset$. Let $H\in\mathcal H_2$, $X_H=V_H\cap (X\setminus I_2)$  and $Y_H=V_H\cap Y$.  Then $Y_H\neq \emptyset$, since $F_1\subseteq E_4$, $F_1$ covers  $X\setminus I_2$ and each edge in $F_2$ is incident to some vertex in $Y$. Let $yx\in E_H$  where $y\in Y_H$. By   assumption, $x$ has an even degree in $H$.

  If $d_{H}(y)< d_{G_1}(y)$, let $yx'$ be an edge in
$G_1-E_4$ where $x'$ is in some non-Eulerian component  $H'$ of $G_4$, as established by  (i). Let $\widehat{E}_{4}=(E_{4}\setminus \{yx\})\cup \{yx'\}$  and     $\widehat{G}_{4}$ be the graph induced by $\widehat{E}_{4}\cup E_{F_3}$. Then $\widehat{E}_{4}$ satisfies (C1) and (C2). Let    ${\widehat{H}}$ be the graph induced by   $E_H\cup E_{H'}\cup \{yx'\}\setminus\{yx\}$. Clearly,    $x$ has an odd degree   in ${\widehat{H}}$.
  \begin{kst}
  \item[1)] If $H'=H$ or  $yx$  is not a cut edge in $H$, then $\widehat{H}$ is a component  in   $\widehat{G}_{4}$ that   admits an odd-degree vertex in each of  $X\setminus I_2$ and $I_{2,1}\cup Y^{odd}_{G_4}$ (clearly, $\widehat{G}_{4}$ satisfies (i)), a contradiction.
\item[2)] \vspace{2mm} If $H'\neq H$ and  $yx$  is   a cut edge  in $H$, then $\widehat{H}$ consists of two components
 $\widehat{H}_1$ and $\widehat{H}_2$ that include  $x'$  and $x$, respectively.

  \begin{kst}
  \item
  Recall that $H'$ is non-Eulerian,   implying that   there exists an odd-degree vertex among $V_{H'}\setminus\{x'\}$ in  $H'$,   and this vertex    also has   an odd degree in $\widehat{H}_1$. So $\widehat{H}_1$ is non-Eulerian.

   \item  \vspace{2mm}
   On the other hand, $\widehat{H}_2$ is non-Eulerian since  $x$ has an odd degree in  $\widehat{H}_2$. Let  $u$ be an odd-degree vertex other than  $x$ in $\widehat{H}_2$. Clearly, $u$ also has an odd degree in $H$ since $d_{H}(u)=d_{\widehat{H}_2}(u)$, implying that $u$  is in $I_{2,1}\cup Y_0$ (since $\mathcal H_3=\emptyset$ by assumption).
    \end{kst}
    That is, $\widehat{G}_{4}$ satisfies (i), and  $\widehat{H}_2$   admits an odd-degree vertex in each of $X\setminus I_2$ and $I_{2,1}\cup Y^{odd}_{G_4}$,   a contradiction.
 \end{kst}
 So $d_{H}(y)=d_{G_1}(y)\geq14$.

\vspace{2mm}\noindent\textbf{Case 1.} If $y$ is incident to at least   three cut edges  $yu_1,yu_2,yu_3$    in $H$.

\vspace{2mm} In this case, let ${\widetilde{H}}_{u_i}$ be the component in $H-yu_i$ that includes $u_i$ for $i=1,2,3$. Clearly, $u_i$ has an odd degree in  $\widetilde{H}_{u_i}$ for $i=1,2,3$. Then there exists at least an odd-degree vertex in $\widetilde{H}_{u_i}$ other than
 $u_i$  that is in $I_{2,1}\cup Y^{odd}_{G_4}$ (since $\mathcal H_3=\emptyset$ by assumption) for each $i=1,2,3$.    This implies that, for $\{i,j\}\subseteq \{1,2,3\}$, $H-\{yu_i,yu_j\}$ is a graph consisting of three components $\widetilde{H}_{u_i}$,  $\widetilde{H}_{u_j}$ and $H-(V_{H_{u_i}}\cup V_{H_{u_j}})$, and each component   includes at least one vertex in $I_{2,1}\cup Y^{odd}_{G_4}$. Let $j_1,j_2\in\{1,2,3\}$ such that $\{u_{j_1},u_{j_2}\}\neq\{x_1,x_2\}$. Let $\widehat{E}_{4}=(E_4\setminus\{yu_{j_1},yu_{j_2}\})\cup \{y^\ast x_1, y^\ast x_2\}$  and    $\widehat{G}_{4}$ be the graph induced by $\widehat{E}_{4}\cup E_{F_3}$. Then $\widehat{E}_{4}$ satisfies (C1) and (C2) since $d_{\widehat{G}_{4}}(y)\geq14-2>8$. Let $\widehat{H}$ be the graph induced by  $E_{H-\{yu_{j_1},yu_{j_2}\}}\cup E_{Q_{x_1,x_2}}$.
\begin{kst}
  \item[1)]
   If neither  $x_1$ nor $x_2$ is   in $V_H$, then $\widehat{H}$ consists of four distinct non-Eulerian components    $Q_{x_1,x_2}$, $\widetilde{H}_{u_{j_1}}$,  $\widetilde{H}_{u_{j_2}}$ and $H-(V_{\widetilde{H}_{u_{j_1}}}\cup V_{\widetilde{H}_{u_{j_2}}})$ in $\widehat{G}_{4}$ (so $\widehat{G}_{4}$ satisfies (i)), where each of  $\widetilde{H}_{u_{j_1}},\widetilde{H}_{u_{j_2}}$ admits an odd-degree vertex in each of $X\setminus I_2$ and $I_{2,1}\cup Y^{odd}_{G_4}$, a  contradiction.
   \item[2)]\vspace{2mm} If at least one of  $x_1, x_2$ is   in $V_H$, then $Q_{x_1,x_2}$ contains all vertices  of at least one graph among $\widetilde{H}_{u_{j_2}}$,  $\widetilde{H}_{u_{j_2}}$ and $H-(V_{\widetilde{H}_{u_{j_1}}}\cup V_{\widetilde{H}_{u_{j_2}}})$. So      each component  of  $\widehat{H}$  includes  a vertex in $I_{2,1}\cup Y^{odd}_{G_4}$ (it follows that  $\widehat{G}_{4}$ satisfies (i)). What is more,   at least one of $u_{j_1}, u_{j_2}$ has  an odd degree in $\widehat{H}$ (recall that  $\{u_{j_1},u_{j_2}\}\neq\{x_1,x_2\}$),  also  a  contradiction.
\end{kst}

\vspace{2mm}\noindent\textbf{Case 2.}  If there exist at most two cut edges incident to $y$ in $H$.

\vspace{2mm} In this case, there  exist at least 12 non-cut edges    edges incident to $y$, since $d_{G_4}(y)\geq14$.
  By Lemma \ref{lem4},  there   exist two non-cut edges $yv_1,yv_2$ such that  $H-\{yv_1,yv_2\}$  is connected.   Clearly, $H-\{yv_1,yv_2\}$ admits an odd-degree vertex in $I_{2,1}\cup Y^{odd}_{G_4}$, and each of $v_1,v_2$ has an odd degree   in  $H-\{yv_1,yv_2\}$. Let $a_1\in \{1,2,3\}$ such that $x_{a_1}\notin\{v_1,v_2\}$.
        \begin{kst} \item[1)]
   If none of   $x_1,x_2,x_3$ is   in $V_H$, let $\widehat{E}_{4}=(E_4\setminus\{yv_1,yv_2\})\cup \{y^\ast x_1, y^\ast x_2\}$  and  $\widehat{G}_{4}$  be the graph induced by $\widehat{E}_{4}\cup E_{F_3}$. Then $\widehat{E}_{4}$ satisfies (C1) and (C2); but $Q_{x_1,x_2}$ and  $H-\{yv_1,yv_2\}$ are two distinct non-Eulerian components in $\widehat{G}_{4}$ (so  $\widehat{G}_{4}$ satisfies (i)),   where $H-\{yv_1,yv_2\}$ admits an odd-degree vertex in each of   $I_{2,1}\cup Y^{odd}_{G_4}$ and $X\setminus I_2$, a  contradiction.
    \item[2)] \vspace{2mm} If at least one of  $x_1,x_2,x_3$ is   in $V_H$, then let $a_2\in \{1,2,3\}\setminus\{a_1\}$ such that either $x_{a_1}$ or $x_{a_2}$ is in $V_H$. Let $\widehat{E}_{4}=(E_4\setminus\{yv_1,yv_2\})\cup \{y^\ast x_{a_1}, y^\ast x_{a_2}\}$  and   $\widehat{G}_{4}$ be the graph induced by $\widehat{E}_{4}\cup E_{F_3}$. Then $\widehat{E}_{4}$ satisfies (C1) and (C2).      Note that   $Q_{x_{a_1},x_{a_2}}$ includes all vertices in $H$, and  at least one of $v_1, v_2$ has  an odd degree in $Q_{x_{a_1},x_{a_2}}-\{yv_1,yv_2\}$ since $\{x_{a_1},x_{a_2}\}\neq\{v_1,v_2\}$. That is,   $\widehat{G}_{4}$ satisfies (i) and  contains a component with an   odd-degree vertex in each of $I_{2,1}\cup Y^{odd}_{G_4}$ and $X\setminus I_2$, a  contradiction.
\end{kst}
   Thus,  (ii) holds.

       This completes the proof of Claim \ref{claim2}.
\end{subproof}

\vspace{2mm} Next,   we   construct an antimagic labeling   on $G$, based on
the vertex partition and edge decomposition of $G$ above, and the careful partition  of  $[m]$.

\vspace{2mm} \noindent
\textbf{2. The careful  labeling assignment.}

\vspace{2mm}\noindent
\textbf{2.1. Top description of the labeling strategy.}

\vspace{2mm} We would construct a labeling on $G$ with $[m]$ ensuring that:
  \begin{itemize}
  \item[(i)] each vertex in $X$ receives a    $\{1,2\}$-vertex sum;
  \item[(ii)]  each vertex   but at most one in   $Y$ receives  a $0$-vertex sum;
    \item[(iii)]   the vertex sums in $X$ are pairwise different;
    \item[(iv)]   the  vertex sums in $Y$ are pairwise different;
           \item[(v)] the unique $\{1,2\}$-vertex sum (if exists) in $Y$   is  distinct from  any one in $X$.\end{itemize}

  For (i) and (ii), we label $E_3\cup E_4$ (that is, $E_{G_3}\cup E_{F_2}\cup E_{F_3}\cup E_4$) with $J$, where
  \begin{kst}
  \item we label  $G_3$ (if not empty) with   $1$-labels and 2-labels alternately along the Eulerian tour of each   component of $G_3$, so that each vertex  in $G_3$ receives a $0$-partial vertex  sums from $E_{G_3}$;

        \item\vspace{2mm} label $E_{F_2}$    such that the difference of the numbers of $1$-labels and $2$-labels on each    star component of $F_2$  is in $\{-2,-1,1,2\}$, so that all vertices in $I_2\setminus I_{2,1}$ receives a $\{1,2\}$-partial vertex sum from $E_{F_2}$;

               \item\vspace{2mm}label $G_4$ (that is, $E_{F_3}\cup E_4$)    in the way of Lemma \ref{lem6}, so that each vertex in $(X\setminus I_2)\cup I_{2,1}$ receives a $\{1,2\}$-partial vertex sum from $E_{G_4}$, and each vertex in $Y$ but at most one  receives a $0$-partial vertex sum   from $E_{G_4}$.
                   \end{kst}

  For (iii), we   finally  label $M$ with $n_X$   $0$-labels, such that  the label on $M(x)$   is   greater  than that on   $M(x')$ whenever $x$ receives a    partial vertex sums  not less than that $x'$ receives, right after  $G-M$ being labeled, for each vertex pair $x,x'$ in $X$.

   For (iv), we assign the labels carefully, such that     any pair of partial vertex sums    on $Y$ are either equal or have a difference of at least $3n_Y$, right after  $G-(M\cup E_1)$ being labeled, and   then  finally  label $E_1\cup M$ with $n_Y$   almost continuous  $0$-labels.

   For (v), we would do some     switching when necessary.

    The detailed procedure for label assignment is described as follows.

\vspace{2mm} \noindent
\textbf{2.2. The definitions of $m_{1,0}, m_{2,0}, s_1$ and $s_2$.}

 \vspace{2mm}For short, let
   $m_{1,0}\triangleq|E_{G_1}\setminus  E_{4}|,$ and $m_{2,0}\triangleq|E_{G[X]}\setminus E_3|$. One has the following relations:
\begin{flalign*}  \ \ \ \ \ \ \ \
    \left\{
   \begin{array}{ll}
   n_Y+n^{{\rm even}}_{Y}+m_{1}
   =(n_Y+n^{{\rm even}}_{Y}+m_{1,0})+m_{1,1}
   =l_{1,0}+(l_{1,1}+l_{1,2}),\vspace{1mm}\\ m_{2}=m_{2,0}+m_{2,1}
   =l_{2,0}+(l_{2,1}+l_{2,2}).
   \end{array}
  \right.
  &&&\end{flalign*}
 Then by (\ref{xeq7}), (\ref{xeq6}),
  \begin{flalign} \label{eq8} \ \ \ \ \ \ \ \
        m_{1,1}-(l_{1,1}+l_{1,2})=\gamma= (l_{2,1}+l_{2,2})-m_{2,1}
     =m_{2,0}-l_{2,0}.
   &&&\end{flalign}

   Let   $s_1$ and $s_2$  be the numbers of odd-size and  even-size  star components  in $F_2$, respectively.

    \vspace{2mm}\noindent
\textbf{2.3. The labeling steps.}

\vspace{2mm} For easy reading, we list the edge sets and their corresponding sizes in Table \ref{table1}, where we also list their assigned
   label sets (some of them will be defined more carefully later).

\begin{table}[htbp]
{\footnotesize
\begin{tabular}{|l|l|l|} \hline
edge set& size  & label set\\ \hline
  $E_{G_3}$  &$\varepsilon_1$ &$J_1$: the greatest ${\varepsilon_1}$ $\{1,2\}$-labels in $J$\\
    &  &\ \ \   (where   possibly    $E_{G_3}=\emptyset$ and   $\varepsilon_1=0$) \\
    &  &   \ \ \ see \textbf{Step 2}\\ \hline
   $E_{F_2}$&$k_2$ &$J_2$: some greatest    1-labels and   2-labels in $J\setminus J_1$  \\
    &  &\ \ \   (relying on the parities of  $s_1,s_2$, and the values of   $l_1$ and $l_2$)\\
    &  &   \ \ \ see \textbf{Step 3}\\ \hline
   $E_{F_3}$ &$k_3$&$J_3$: some greatest   1-labels and   2-labels in $J\setminus (J_1\cup J_2)$ \\
    &  &\ \ \   (subject to the labeling way in Lemma \ref{lem6})\\
    &  &   \ \ \ see \textbf{Step 4.1}\\
   \hline
   $E_4$ &$m_{1,1}$&$J_4=J\setminus (J_1\cup J_2\cup  J_3)=\{3i-2,3j-1|i\in[1,\theta_1],j\in[1,\theta_2]\}$\\
    &  &\ \ \     (where $|\theta_1-\theta_2|\leq2$ by the labeling way and the definition of $E_4$)  \\
    &  &   \ \ \ see \textbf{Steps 4.1, 4.2 and 4.3}\\
   \hline
    $E_{4,0}$ &0,1 or 2&$J_{4,0}=J_4\setminus \{3i-2,3i-1|i\in[1,\theta]\}$  \\
      & &\ \ \ (where $\theta\triangleq\frac{m_{1,1}}{2}$ when $\theta_1=\theta_2$, and $\theta\triangleq\lfloor\frac{m_{1,1}-1}{2}\rfloor$ when $\theta_1\neq\theta_2$)  \\
    &  &  \ \ \ see \textbf{Steps 4.2 and 4.3}\\
    \hline
   $E_{4,1}$ &$2n^{{\rm odd}}_{Y}$&$J_{4,1}=\{3i-2,3i-1|i\in[1,n^{{\rm odd}}_{Y}\}$\\
    &  &   \ \ \ see \textbf{Steps 4.2 and 4.3}\\ \hline
   $E_{4,2}$ &$2n^{{\rm even}}_{Y}$&$J_{4,2}=
    \{3(n^{{\rm odd}}_{Y}+\alpha)
    +3i-2,3(n^{{\rm odd}}_{Y}+\alpha)+3i-1|i\in[1,n^{{\rm even}}_{Y}]\}$ when $\alpha\geq0$\\
    &&\ \ \ \ \ \ or $\{3n^{{\rm odd}}_{Y}
    +3i-2,3n^{{\rm odd}}_{Y}+3i-1|i\in[1,n^{{\rm even}}_{Y}]\}$  when $\alpha<0$ \\
   && \ \ \  (where $\alpha\triangleq n^{{\rm even}}_{Y}+(l_{0}-m_{2,0}-\theta+1)=n^{{\rm even}}_{Y}+ l_{1,0}-\gamma-\theta+1$) \\
    &  &\ \ \ see \textbf{Steps 4.2 and 4.3}\\

    \hline
   $E_{4,3}$ &$2|\alpha|$&$J_{4,3}=\{3n^{{\rm odd}}_{Y}+3i-2,3n^{{\rm odd}}_{Y}+3i-1|i\in[1,\alpha]\}$ when $\alpha\geq0$\\
         & & \ \ \ \ \ \  or $\{3\theta -3i+1,3\theta-3i+2|i\in[1,-\alpha]\}$ when $\alpha<0$\\
    &  &\ \ \ see \textbf{Steps 4.2 and 4.3}\\
   \hline
   $E_{4,4}$ &$2(\theta-n_{2}-|\alpha|)$&$J_{4,4}=\{3i-2,3i-1|i\in[n_Y+\alpha+1,\theta]\}$ when $\alpha\geq0$\\
    &&\ \ \ \ \ \   or $\{3i-2,3i-1|i\in[n_Y+1,\theta+\alpha]\}$  when $\alpha<0$\\
    &  &\ \ \   see \textbf{Steps 4.2 and 4.3} \\  \hline
     $E_{G[X]}\setminus E_3$&$m_{2,0}$ &$O_1$: the greatest $m_{2,0}$ 0-labels in $O$\\
    &  &\ \ \ see \textbf{Step 1}\\\hline
   $E_2$&$n^{{\rm even}}_{Y}$ &$O_2=\{3i|i\in[1,n^{{\rm even}}_{Y}]\}$ when $n^{{\rm even}}_{Y}$ is odd\\
    &  &\ \ \   or $\{3i|i\in[1,n^{{\rm even}}_{Y}-1]\cup\{\frac{n^{{\rm even}}_{Y}}{2}\}\}$ when $n^{{\rm even}}_{Y}$ is even\\
    &  &\ \ \   see \textbf{Step 5} \\  \hline
       $E_{G_1}\setminus E_4$&$m_{1,0}$  &$O_3=\{3i|i\in[n_Y+n^{{\rm even}}_{Y}+1,l_0-m_{2,0}]\}$ \\
    &  &\ \ \ see \textbf{Step 6}\\  \hline
         $E_1$ &$n_Y-n_X$& $O_{4,1}$: the least $(n_Y-n_X)$ labels  in  $O\setminus (O_1\cup O_2\cup O_3)$\\
    &  &\ \ \ see \textbf{Step 7}\\  \hline
           $M$  &$n_X$    &$O_{4,2}=I\setminus (O_1\cup O_2\cup O_3\cup O_{4,1})$ \\
    &  &\ \ \ see \textbf{Steps 8 and 9}\\
\hline
\end{tabular}}
\vspace{2mm}
\caption{\label{table1}The edge sets in the decomposition of $E_G$ with their sizes, and their assigned label sets, where $E_{G_3}\cup E_{F_2}\cup E_{F_3}=E_3$, $E_4=\cup_{i=0}^4E_{4,i}.$}
\end{table}

\vspace{2mm}\noindent\textbf{Step 1.} Label $E_{G[X]}\setminus  E_{3}$.

\vspace{2mm}  Label $E_{G[X]}\setminus  E_{3}$ arbitrarily with   the $m_{2,0}$ greatest $0$-labels: \begin{flalign*}  \ \ \ \ \ \ \ \ O_1\triangleq\{3i|i\in [l_0-m_{2,0}+1,l_0]\}.&&&\end{flalign*}
Clearly, each vertex in $X$ receives a $0$-partial vertex sum in this step.

 \vspace{2mm}\noindent  \textbf{Step 2.}  Label $G_{3}$.

   \vspace{2mm}   If   $G_{3}$ is not empty, define the label set $J_1$ as the  $\varepsilon_1$ greatest $\{1,2\}$-labels:
   \begin{flalign*}  \ \ \ \ \ \ \ \ J_1\triangleq\{3(l_1-i)+1,3(l_2-i)+2|i\in[1,\frac{\varepsilon_1}{2}]\}.
 &&&\end{flalign*}
  Label  each component  of   $G_{3}$  along the Eulerian tour   using the   $1$-labels  and the $2$-labels  alternately  in $J_1$.

   Clearly,  each vertex in $X$ still receives a $0$-partial vertex sum to this step.

 \vspace{2mm}\noindent \textbf{Step 3.}  Label $F_2$.

  \vspace{2mm} Define integer set $J_2$ as follows.  If $s_1>0$    or  $s_2$ is even, then let
     \begin{flalign*}  \ \ \ \ \ \ \ \
    J_2\triangleq\left\{
   \begin{array}{ll}
   \{3(l_1-\frac{\varepsilon_1}{2}-i)+1,3(l_2-\frac{\varepsilon_1}{2}-j)+2|
  i\in[1,\lceil\frac{k_2}{2}\rceil],
  j\in[1,\lfloor\frac{k_2}{2}\rfloor]\},&    {\rm if}\  l_1=l_2+1, \vspace{1mm}  \\
   \{3(l_1-\frac{\varepsilon_1}{2}-i)+1,3(l_2-\frac{\varepsilon_1}{2}-j)+2|
  i\in[1,\lfloor\frac{k_2}{2}\rfloor],
    j\in[1,\lceil\frac{k_2}{2}\rceil]\},&   {\rm if}\    l_1=l_2.
   \end{array}
  \right. &&&
  \end{flalign*}
     If    $s_1=0$     and $s_2$ is odd, then let
        \begin{flalign*} \ \ \ \ \ \ \ \
    J_2\triangleq\left\{
   \begin{array}{ll}
   \{3(l_1-\frac{\varepsilon_1}{2}-i)+1,3(l_2-\frac{\varepsilon_1}{2}-j)+2|
  i\in[1,\frac{k_2+2}{2}],j\in[1,\frac{k_2-2}{2}]\},&   {\rm if}\   l_1=l_2+1, \vspace{1mm}  \\
   \{3(l_1-\frac{\varepsilon_1}{2}-i)+1,3(l_2-\frac{\varepsilon_1}{2}-j)+2|
  i\in[1,\frac{k_2-2}{2}],j\in[1,\frac{k_2+2}{2}]\},&  {\rm if}\     l_1=l_2.
   \end{array}
  \right.&&&
  \end{flalign*}

 Let  $\{H_i|i\in[1,s_2+s_1]\}$ be the set of star components of $F_2$, where $H_i$ has an even size   if $i\leq s_2$ and   has an odd size  if $i>s_2$.
 Label $H_1,H_2,\ldots,H_{s_2+s_1}$ in the order. For $i\in[1,s_2+s_1]$, suppose the greatest unused $\{1,2\}$-labels is a ${\mu_i}$-label ($\mu_i=1,2$) before we label $H_i$.
  \begin{kst}
  \item[1)] For $i\in[1, s_2]$, assign both the greatest $\frac{{|E_{H_i}|}}{2}+1$ unused ${\mu_i}$-labels and the greatest   $\frac{{|E_{H_i}|}}{2}-1$ unused  $(3-\mu_i)$-labels to  $E_{H_i}$.
          \item[2)] \vspace{2mm}
          For $i\in[s_2+1,s_2+s_1]$,   assign both   the greatest $\frac{{|E_{H_i}|+1}}{2}$ unused
${\mu_i}$-label and $\frac{{|E_{H_i}|-1}}{2}$ unused
$({3-\mu_i})$-labels to  $E_{H_i}$.
\end{kst}

The labels used here are exactly  that in $J_2$, and  each vertex in $I_2\setminus I_{2,1}$ receives a $\{1,2\}$-partial sum to this step.

\vspace{2mm}\noindent\textbf{Step 4.}   Label $E_{G_4}$ (that is, $E_4\cup E_{F_3}$).

\vspace{2mm}\noindent\textbf{Step 4.1.}   Rough label assignment of $J\setminus (J_1\cup J_2)$ to $E_{G_4}$,  according to  Lemma \ref{lem6}.

 \vspace{2mm} Clearly, $|J\setminus (J_1\cup J_2)|=|E_{G_4}|=|E_{4}\cup E_{F_3}|=m_{1,1}+k_3$. The   claim below is to ensure that the label set $J\setminus (J_1\cup J_2)$ satisfies  the conditions of    Lemma \ref{lem6}.

\begin{claim} \label{claim3} The difference of the numbers of 1-labels and 2-labels in  $J\setminus (J_1\cup J_2)$ is at most 2.
\end{claim}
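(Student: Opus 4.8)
The plan is a direct counting argument. For a finite set $S$ of labels, write $d(S)$ for the number of $1$-labels in $S$ minus the number of $2$-labels in $S$. Since $J_1$ and $J_2$ are disjoint subsets of $J$, we have $d\bigl(J\setminus(J_1\cup J_2)\bigr)=d(J)-d(J_1)-d(J_2)$, so it suffices to show $|d(J)-d(J_1)-d(J_2)|\le 2$, and the proof reduces to reading off each of $d(J)$, $d(J_1)$, $d(J_2)$ from its definition.

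First I would pin down $d(J)$ and $d(J_1)$. The set $J$ consists of all $\{1,2\}$-labels in $[m]$, and in increasing order these are $1,2,4,5,7,8,\dots$, hence alternate $1,2,1,2,\dots$ modulo $3$; so $J$ contains $l_1$ $1$-labels and $l_2$ $2$-labels with $d(J)=l_1-l_2\in\{0,1\}$ (indeed $l_1=\lceil m/3\rceil$). By the definition of $J_1$ in \textbf{Step 2}, $J_1$ consists of the $\varepsilon_1/2$ greatest $1$-labels together with the $\varepsilon_1/2$ greatest $2$-labels, so $d(J_1)=0$.

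Next I would read off $d(J_2)$ from \textbf{Step 3}, following its case split. If $s_1>0$ or $s_2$ is even, then $J_2$ has $\lceil k_2/2\rceil$ $1$-labels and $\lfloor k_2/2\rfloor$ $2$-labels when $l_1=l_2+1$, and the reverse when $l_1=l_2$; hence $d(J_2)=\lceil k_2/2\rceil-\lfloor k_2/2\rfloor\in\{0,1\}$ in the former subcase and $d(J_2)\in\{-1,0\}$ in the latter. If instead $s_1=0$ and $s_2$ is odd, then every star component of $F_2$ has even size and there is at least one of them, so $k_2$ is even and $k_2\ge 2$, and the displayed formulas give $d(J_2)=(k_2+2)/2-(k_2-2)/2=2$ when $l_1=l_2+1$ and $d(J_2)=-2$ when $l_1=l_2$. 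Combining with $d(J)\in\{0,1\}$ and $d(J_1)=0$, in each of the six resulting cases $d(J)-d(J_1)-d(J_2)$ is one of $-1,0,1,2$, so it has absolute value at most $2$, which proves the claim (the value $2$ being attained in the last case, so the bound is tight).

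There is no genuine obstacle here. The only points that need care are the well-definedness of the formulas in \textbf{Step 3} in the case $s_1=0$, $s_2$ odd (one must note $k_2$ is then even and $k_2\ge 2$, which holds because there is at least one even-size star component and no odd-size one), and keeping the sign of the surplus $d(J_2)$ straight across the interaction of the two binary case distinctions.
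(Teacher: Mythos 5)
Your proof is correct and takes essentially the same route as the paper: a direct case analysis reading off the $1$-label and $2$-label counts from the definitions of $J$, $J_1$ and $J_2$. The only difference is cosmetic --- you track just the signed difference $d(\cdot)$ and use its additivity over the disjoint pieces, whereas the paper writes out the residual set $J\setminus(J_1\cup J_2)$ explicitly in each case (which it needs later anyway); your observation that $k_2$ is even and at least $2$ when $s_1=0$ and $s_2$ is odd matches the paper's.
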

\begin{subproof} Note that $\varepsilon_1$  is even since  $G_3$ is an even bipartite graph.

\vspace{2mm}\noindent\textbf{Case 1.} If  $I_{2,1}=\emptyset$ and   $m_{1,1}$ is even.

 \vspace{2mm} In this case,
  $k_3=|I_{2,1}|=0$ and $|J\setminus (J_1\cup J_2)|=m_{1,1}$ is even.  Then  $l_1+l_2$ and $k_2$ have the same parity, since  $l_1+l_2=\lceil\frac{2m}{3}\rceil=m_{1,1}+m_{2,1}=m_{1,1}+\varepsilon_1
  +k_2+k_3$ by (\ref{xeq3})   and the definition  of $E_4$. By the definitions of $J_1,J_2$, when   $s_1>0$     {\rm or}   $s_2$  {\rm is}\ {\rm even}, one has
 \begin{flalign*}\ \ \ \ \ \ \ \ J\setminus (J_1\cup J_2)=
   \{3i-2,3i-1|
  i\in[1, \frac{m_{1,1}}{2}]\};&&&  \end{flalign*}
      when $s_1=0$     {\rm and}   $s_2$  {\rm is}\ {\rm odd} (then $k_2$ is even, and so      $l_1+l_2$ is even, implying that $l_1=l_2$), one has
\begin{flalign*}\ \ \ \ \ \ \ \ J\setminus (J_1\cup J_2)=\{3i-2,3j-1|
  i\in[1,\frac{m_{1,1}+2}{2}],j\in[1,\frac{m_{1,1}-2}{2}]\}.&&&
\end{flalign*}

\vspace{2mm}\noindent\textbf{Case 2.} If  $I_{2,1}\neq\emptyset$ or   $m_{1,1}$ is odd.

    \vspace{2mm} In this case,   by the definitions of $J_1,J_2$,  when    $s_1>0$     {\rm or}   $s_2$  {\rm is}\ {\rm even},
\begin{flalign*}\ \ \ \ \ \ \ \ J\setminus (J_1\cup J_2)=
   \{3i-2,3j-1|
  i\in[1, \lceil\frac{m_{1,1}+k_3}{2}\rceil],j\in[1, \lfloor\frac{m_{1,1}+k_3}{2}\rfloor]\};&&&\end{flalign*}
   when $s_1=0$     {\rm and}   $s_2$  {\rm is}\ {\rm odd} (then $m_{1,1}+k_3$ and $l_{1}+l_2$ have the same parity since $l_1+l_2=m_{1,1}+\varepsilon_1+k_2+k_3$),
          \begin{flalign*}\ \ \ \ \ \ \ \
    J\setminus (J_1\cup J_2)=\left\{
   \begin{array}{ll}
  \{3i-2,3j-1|
  i\in[1, \frac{m_{1,1}+k_3-1}{2}],j\in[1, \frac{m_{1,1}+k_3+1}{2} ]\},&   {\rm if}\   l_1=l_2+1, \vspace{1mm}  \\
    \{3i-2,3j-1|
  i\in[1, \frac{m_{1,1}+k_3+2}{2}],j\in[1, \frac{m_{1,1}+k_3-2}{2}]\},&    {\rm if}\  l_1=l_2.
   \end{array}
  \right.
  &&&\end{flalign*}

  Thus, the difference of the numbers of 1-labels and 2-labels in  $J\setminus (J_1\cup J_2)$ is at most 2, in both cases.

   This completes the proof of Claim \ref{claim3}.
\end{subproof}

  \vspace{2mm}Label $E_{G_4}$ (that is, $E_4\cup E_{F_3}$) with the $\{1,2\}$-label set $J\setminus (J_1\cup J_2)$ in the way of Lemma \ref{lem6},  such that:
 \begin{kst}
 \item[(i)]  each vertex in  $(X\setminus I_2)\cup I_{2,1}$ receives  a $\{1,2\}$-partial sum;
    \item[(ii)]  \vspace{2mm}
  if $I_{2,1}=\emptyset$ and $m_{1,1}$ is even (that is,   there exists no odd-degree vertex in $I_{2,1}\cup Y$ in $G_4$), then $l_{1,y}=l_{2,y}$ for each vertex $y$ in $Y$ but at most one (denoted $y'$ if it exists)   and   $|l_{1,y'}-l_{2,y'}|=2$, where $l_{\mu,v}$ ($\mu=1,2$) is the number of $\mu$-labels assigned to $E_{G_4}(v)$ from $J\setminus (J_1\cup J_2)$ for each $v\in (X\setminus I_2)\cup I_{2,1}\cup Y$;  if $I_{2,1}\neq\emptyset$ or $m_{1,1}$ is odd (that is, there exists at least one odd-degree vertex in $I_{2,1}\cup Y$ in $G_4$), then $|l_{1,v}-l_{2,v}|\leq 1$ for each vertex $v$  in $I_{2,1}\cup Y$;
   \item[(iii)]\vspace{2mm} there exist  at least one 1-partial vertex sum and one  2-partial vertex sum  in $(X\setminus I_2)\cup I_{2,1}$ (since there exists at most one odd-degree vertex in $Y$ in $G_4$, each vertex in $Y$ has degree at least eight in $G_4$, and $n_Y\geq\frac{|V_G|}{2}$).
       \end{kst}
\vspace{2mm}

 Let $\rho_{3,\mu}$ ($\mu=1,2$) be the number of $\mu$-labels   assigned to $E_{F_3}$, subject to the labeling way in Lemma \ref{lem6}. Let    $J_3$ be the label set consisting of the greatest $\rho_{3,1}$ $1$-labels and the greatest $\rho_{3,2}$ $2$-labels
 in $J\setminus (J_1\cup J_2)$.

 Subject to the labeling way in Lemma \ref{lem6},    assign the labels in $J_3$ to $E_{F_3}$ arbitrarily.

 Let $J_4\triangleq J\setminus(J_1\cup J_2\cup J_3)$.   We would assign $J_4$ to $E_4$   carefully, after defining  some special subsets of $E_4$ and subsets of $J_4$.

  \vspace{2mm}\noindent\textbf{Step 4.2.}   Defining $\theta_1,\theta_2,\theta,$   $E_{4,i}$ ($i\in[0,4]$), $J_{4,0}$ and $\alpha$.

   \vspace{2mm}Let $\theta_\mu$ be the number of $\mu$-labels in $J_4$ for $\mu=1,2$. That is,
 \begin{flalign*}    \ \ \ \ \ \ \ \  J_4=\{3i-2,3j-1|i\in[1,\theta_1],j\in[1,\theta_2]\}. &&&
\end{flalign*}
Then  $|\theta_1-\theta_2|\leq2$ by Step 4.1 (ii) and by  the definition of $E_4$. Let $\theta\triangleq\min\{\theta_1,\theta_2\}$.
  That is,
 \begin{flalign} \label{eq9}  \ \ \ \ \ \ \ \  \theta&=\left\{ \begin{array}{ll} \frac{m_{1,1}}{2}, &  {\rm if}\  \theta_1=\theta_2,\\
  \lfloor\frac{m_{1,1}-1}{2}\rfloor,& {\rm if}\   \theta_1\neq\theta_2.\\
  \end{array}\right. &&&
\end{flalign}
 By (\ref{xeq10}) and (\ref{eq9}),
  \begin{flalign}  \ \ \ \ \ \ \ \  \label{yeq1} \theta\geq
  \frac{m_{1,1}-2}{2}\geq \frac{9}{2}n_Y. &&&
\end{flalign}   When $|\theta_1-\theta_2|=1$ (then there exists a  unique odd-degree vertex  $y'$ in $Y$ in $G_4$), let      $e'$ be an edge incident to $y'$ in $E_{4}$;  when $|\theta_1-\theta_2|=2$ (then there exists a unique  vertex  $y'$ in $Y$ in $G_4$ with $|l_{1,y'}-l_{2,y'}|=2$), let $e', e''$ be the two edges of some $X$-link in $E_{4}$ that are incident to   $y'$. Let
    \begin{flalign*} \ \ \ \ \ \ \ \
    Y_0\triangleq\left\{
   \begin{array}{ll}
  \emptyset,&     {\rm if}\   \theta_1=\theta_2,  \vspace{1mm}  \\
   \{y'\},&   {\rm if}\ \theta_1\neq\theta_2,  \vspace{1mm}  \\
    \end{array}
  \right. \ \ \ \   E_{4,0}\triangleq\left\{
   \begin{array}{ll}
  \emptyset,&      {\rm if}\  \theta_1=\theta_2,  \vspace{1mm}  \\
   \{e'\},&  {\rm if}\  |\theta_1-\theta_2|=1,  \vspace{1mm}  \\
  \{e',e''\},&  {\rm if}\ |\theta_1-\theta_2|=2,\vspace{1mm}\\
   \end{array}
  \right.   &&&
\end{flalign*}
 and let \begin{flalign} \label{j40}    \ \ \ \ \ \ \ \  J_{4,0}&\triangleq J_4\setminus \{3i-2,3i-1|i\in[1,\theta]\}. &&&
\end{flalign}
Suppose $Y=\{y_i|i\in[1,n_Y]\}$ where $d_G(y_i)\leq d_G(y_j)$ whenever $i<j$. Let $e_{i,1}, e_{i,2}, e_{i,3}, e_{i,4},e_{i,5}, e_{i,6}\in   E_{4}(y_i)$ for each  $i\in [1,n_Y]$, where $e_{i,j}$ receives a 1-label for $j=1,3,5$ and $e_{i,j}$ receives a 2-label for $j=2,4,6$ under the labeling way in Lemma \ref{lem6}.  Let  $E_{4,1}=\{e_{i,1}, e_{i,2}| y_i\in Y_{\rm odd}\}$, $E_{4,2}=\{e_{i,1}, e_{i,2}| y_i\in Y_{\rm even}\}$. 
Denote by
  \begin{flalign}\label{eq13} \ \ \ \ \ \ \ \  \alpha\triangleq n^{\rm even}_Y+(l_{0}-m_{2,0}-\theta+1)=n^{\rm even}_Y+ l_{1,0}-\gamma-\theta+1, &&&
\end{flalign}
where the second equality holds by (\ref{eq8}) and by  the fact that $l_0=l_{1,0}+l_{2,0}$.
\begin{claim} \label{claim4} One has that  $|\frac{\alpha}{2}|< n_Y$.
\end{claim}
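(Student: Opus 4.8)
The plan is to reduce the claim to the two-sided bound on $\gamma$ recorded in (\ref{xeq8}) together with the trivial estimates $0\le n^{{\rm even}}_{Y}\le n_Y$ and $n_X\le n_Y$. First I would eliminate $l_{1,0}$ and $\gamma$ from the defining equation (\ref{eq13}) of $\alpha$. Since $l_{1,0}+(l_{1,1}+l_{1,2})=n_Y+n^{{\rm even}}_{Y}+m_1$ (these count all the labels in $[n_Y+n^{{\rm even}}_{Y}+m_1]$) and (\ref{eq8}) gives $l_{1,1}+l_{1,2}=m_{1,1}-\gamma$, we obtain $l_{1,0}=n_Y+n^{{\rm even}}_{Y}+m_1-m_{1,1}+\gamma$; substituting into (\ref{eq13}) the two occurrences of $\gamma$ cancel and
\[
\alpha=2n^{{\rm even}}_{Y}+n_Y+m_1-m_{1,1}-\theta+1 .
\]
This identity is the crux of the argument; everything after it is routine estimation.

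Next I would feed in the two-sided estimates already available for $\theta$ and $m_{1,1}$. By (\ref{eq9}) (equivalently (\ref{yeq1})) one has $\frac{m_{1,1}}{2}-1\le\theta\le\frac{m_{1,1}}{2}$, and by (\ref{xeq6}) one has $m_{1,1}=\lceil\frac{2(m_1+n_Y+n^{{\rm even}}_{Y})}{3}\rceil+\gamma$, so $\frac{3}{2}m_{1,1}$ lies within $1$ of $(m_1+n_Y+n^{{\rm even}}_{Y})+\frac{3}{2}\gamma$. Substituting both into the identity above, the $m_1$-terms cancel and the ceiling/floor rounding contributes only an additive term in $[0,2]$, which gives
\[
n^{{\rm even}}_{Y}-\frac{3}{2}\gamma\ \le\ \alpha\ \le\ n^{{\rm even}}_{Y}-\frac{3}{2}\gamma+2 .
\]

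Finally I would close the estimate using $0\le n^{{\rm even}}_{Y}\le n_Y$, the bound $-\frac{n_X}{2}<\gamma<n_X$ from (\ref{xeq8}), $n_X\le n_Y$, and $n_Y\ge n_X\ge\delta(G)\ge 15$ (each vertex of the independent set $Y$ has at least $15$ neighbours, all lying in $X$). These yield $-\frac{3}{2}n_Y<\alpha<\frac{7}{4}n_Y+2\le 2n_Y$, i.e.\ $-2n_Y<\alpha<2n_Y$, which is precisely $|\frac{\alpha}{2}|<n_Y$. I do not anticipate a real obstacle: the only delicate point is the bookkeeping in the first step (producing the clean formula for $\alpha$ with $\gamma$ cancelled), and after that the hypothesis $\delta(G)\ge 15$ leaves a slack of $\frac{n_Y}{4}\ge\frac{15}{4}>1$ that comfortably absorbs the additive constants coming from the ceilings and floors.
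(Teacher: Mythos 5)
Your proposal is correct and follows essentially the same route as the paper: both arguments reduce to the two-sided estimate $n^{\rm even}_{Y}-\tfrac{3}{2}\gamma\leq\alpha\leq n^{\rm even}_{Y}-\tfrac{3}{2}\gamma+2$ (obtained from $\theta\approx\tfrac{m_{1,1}}{2}$, the relation $m_{1,1}-(l_{1,1}+l_{1,2})=\gamma$, and the rounding slack of at most $1$ in $l_{1,0}$ versus $\tfrac{l_{1,1}+l_{1,2}}{2}$), and then close with $-\tfrac{n_X}{2}<\gamma<n_X$, $\max\{n^{\rm even}_{Y},n_X\}\leq n_Y$ and $n_Y\geq 15$. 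Your elimination of $l_{1,0}$ and your single uniform bound $-2n_Y<\alpha<2n_Y$ are only cosmetic variations on the paper's case split by the sign of $\gamma$.
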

\begin{subproof} By   (\ref{eq8}), (\ref{eq9}) and (\ref{eq13}),
\begin{flalign}  \label{eq001}  \ \ \ \ \ \ \ \  \begin{split} \alpha
 &=n^{\rm even}_Y+ l_{1,0}-\theta+1-\gamma\\
&\geq n^{\rm even}_Y+ l_{1,0}-\frac{l_{1,1}+l_{1,2}+\gamma}{2} +1-\gamma \\
&=  (n^{\rm even}_Y+1)+(l_{1,0}-\frac{l_{1,1}+l_{1,2}}{2})-\frac{3\gamma}{2}
\\
&\geq n^{\rm even}_Y-\frac{3\gamma}{2},\end{split}&&&
\end{flalign}  since
\begin{flalign*}  \ \ \ \ \ \ \ \  -1\leq l_{1,0}-\frac{l_{1,1}+l_{1,2}}{2}\leq0;&&
\end{flalign*}
 and one has that
\begin{flalign}  \label{eq002}  \ \ \ \ \ \ \ \   \begin{split}\alpha
 &=n^{\rm even}_Y+ l_{1,0}-\theta+1-\gamma\\
&\leq n^{\rm even}_Y+ l_{1,0}  -\lfloor\frac{l_{1,1}+l_{1,2}+\gamma-1}{2}\rfloor +1-\gamma \\
& \leq n^{\rm even}_Y+ l_{1,0}   -\frac{l_{1,1}+l_{1,2}+\gamma-2}{2} +1-\gamma\\
&\leq n^{\rm even}_Y-\frac{3\gamma}{2}+2.\end{split}&&
\end{flalign}
 Recall that $\frac{-n_X}{2}<\gamma<n_X$  by (\ref{xeq8}). Then by (\ref{eq001}),  (\ref{eq002}), and by the fact that  $\max\{n^{\rm even}_Y,n_X\}\leq n_Y$ and $n_Y\geq15$,
  \begin{kst}
  \item
  if $0\leq\gamma<n_X$,
   \begin{flalign*}\ \ \ \ \ \ \ \ \ \ \ \ \ \ \ \ |\frac{\alpha}{2}|\leq \max\{\frac{n^{\rm even}_Y+2}{2},\frac{3\gamma}{4}\}
\leq\max\{\frac{n^{\rm even}_Y+2}{2},\frac{3(n_X-1)}{4}\}<n_Y;&&&
\end{flalign*}
 \item \vspace{2mm}if $-\frac{n_X}{2}<\gamma<0$,
 \begin{flalign*}\ \ \ \ \ \ \ \ \ \ \ \ \ \ \ \ \frac{\alpha}{2}\leq \frac{1}{2}({n^{\rm even}_Y}-\frac{3\gamma}{2}+2)\leq
\frac{n^{\rm even}_Y}{2}+\frac{3n_X}{8}+1<n_Y.&&&
\end{flalign*}
\end{kst}
 Thus,   $|\frac{\alpha}{2}|< n_Y$ in both cases.

 This completes the proof of Claim \ref{claim4}.
\end{subproof}

\vspace{2mm}Define  \begin{flalign*}\ \ \ \ \ \ \ \
    E_{4,3}\triangleq\left\{
   \begin{array}{ll}
  \{e_{i,3}, e_{i,4},e_{j,5}, e_{j,6}| i\in[1,\lfloor\frac{\alpha}{2}\rfloor], j\in[1,\lceil\frac{\alpha}{2}\rceil]\},&      {\rm if}\ \alpha\geq0,  \vspace{1mm}  \\
  \{e_{i,3}, e_{i,4},e_{j,5}, e_{j,6}| i\in[n_Y+\lfloor\frac{\alpha}{2}\rfloor+1,n_Y], j\in[n_Y+\lceil\frac{\alpha}{2}\rceil+1,n_Y]\},&  {\rm if}\  \alpha<0.  \vspace{1mm}
   \end{array}
  \right.   &&&\end{flalign*}
  Note that $E_{4,3}$ is well-defined by Claim \ref{claim4}. Let $E_{4,4}\triangleq E_{4}\setminus (E_{4,0}\cup E_{4,1}\cup E_{4,2}\cup E_{4,3})$.

\vspace{2mm}\noindent\textbf{Step 4.3.}   Careful  label assignment of $J_4$ to $E_{4}$.

\vspace{2mm} Now we have  the following     label assignment on $E_{4}$.
 { \begin{kst}
 \item[1)]  Assign   the   labels in $J_{4,0}$ arbitrarily  to
$E_{4,0}$.

 \item[2)]\vspace{2mm}   Pair the labels in $J_{4,1}\triangleq\{3i-2,3i-1|i\in[1,n^{\rm odd}_Y\}$ such that each pair has the sum  $3n^{\rm odd}_Y$ and is assigned (properly subject to the labeling way of Lemma \ref{lem6}) to some $X$-link in $E_{4,1}$. Then for each   $y$  in $Y_{{\rm odd}}$, \begin{flalign} \ \ \ \ \ \ \ \ \ \ \ \ \label{eq011}  \sigma^{E_{4,1}}(y)=p_1 \triangleq3n^{\rm odd}_Y.&&&
\end{flalign}
\item[3)] \vspace{2mm} Let $
    J_{4,2}\triangleq
    \{3(n^{\rm odd}_Y+\max\{0,\alpha\})+3i-2,3(n^{\rm odd}_Y
    +\max\{0,\alpha\})+3i-1|i\in[1,n^{\rm even}_Y]\}$.
    \begin{kst}
    \item
 When  $n^{\rm even}_Y$ is odd,  pair the labels in $J_{4,2}$ in the way of Lemma \ref{lem3} (i), and assign the label pairs to the $X$-links in   $E_{4,2}$,
  such that $Y_{{\rm even}}$ receives continuous partial vertex sums from $E_{4,2}$. That is,
  \begin{flalign}\ \ \ \ \ \ \ \ \ \ \ \ \ \ \ \ \ \ \ \label{xin1}   \sigma^{E_{4,2}}(Y_{{\rm even}})= \{6(n^{\rm odd}_Y+\max\{0,\alpha\})+\frac{3(n^{\rm even}_Y-1)}{2}+3i|i\in[1,n^{\rm even}_Y]\}. &&& \end{flalign}
\item \vspace{2mm} When $n^{\rm even}_Y$ is even,   pair the labels in $J_{4,2}$ in the way of Lemma \ref{lem3} (ii), and assign the label pairs to the $X$-links in   $E_{4,2}$,
  such that
  \begin{flalign}\ \ \ \ \ \ \ \ \ \ \ \ \ \ \ \ \ \ \  \label{xin2} \begin{split}  \sigma^{E_{4,2}}(Y_{{\rm even}})=&  \{6(n^{\rm odd}_Y+\max\{0,\alpha\})+\frac{3 n^{\rm even}_Y}{2}+3+3i|i\in[1,n^{\rm even}_Y-1]\}\\
   &\cup \{6(n^{\rm odd}_Y+\max\{0,\alpha\})+3\},\end{split}&&& \end{flalign} with $n_Y-1$  continuous 0-partial vertex sums appearing in this collection.

\end{kst}

\item[4)] \vspace{2mm}  Let
\begin{flalign*} \ \ \ \ \ \ \ \ \ \ \ \
    J_{4,3}\triangleq\left\{
   \begin{array}{ll}
  \{3n^{\rm odd}_Y+3i-2,3n^{\rm odd}_Y+3i-1|i\in[1,\alpha]\},&   {\rm if}\    \alpha\geq0,  \vspace{1mm}  \\
   \{3\theta -3i+1,3\theta-3i+2|i\in[1,-\alpha]\},&   {\rm if}\     \alpha<0.  \\
   \end{array}
  \right.   &&&
\end{flalign*} Label $E_{4,3}$ with
  $J_{4,3}$, where the labels
  are paired such that each pair has the sum
  \begin{flalign} \ \ \ \ \ \ \ \ \ \ \ \  \label{eq014}
    p_3\triangleq\left\{
   \begin{array}{ll}
  6n^{\rm odd}_Y+3\alpha,&      {\rm if}\   \alpha\geq0,  \vspace{1mm}  \\
   6\theta+3\alpha,&  {\rm if}\ \alpha<0,     \vspace{1mm}
   \end{array}
  \right.   &&&
\end{flalign}   and is assigned to some
  $X$-link in $E_{4,3}$. Then
  \begin{kst}
    \item when $\alpha$ is even, one has
  \begin{flalign} \ \ \ \ \ \ \ \ \ \ \ \ \ \ \ \ \ \ \ \ \ \ \ \
    \sigma^{E_{4,3}}(y_i)=2p_3,&&&
\end{flalign}
    for each $i$ in $[1,\frac{\alpha}{2}]$ (resp. $[n_Y+\frac{\alpha}{2}+1,n_Y]$) if $\alpha\geq0$ (resp. $\alpha<0$);

       \item \vspace{2mm}when $\alpha$ is odd, one has \begin{flalign} \ \ \ \ \ \ \ \ \ \ \ \ \ \ \ \ \ \ \ \ \ \ \ \
    \sigma^{E_{4,3}}(y_i)=2p_3,&&&
\end{flalign}   for each $i$ in $[1,\frac{\alpha-1}{2}]$ ({resp. $[n_Y+\frac{\alpha+1}{2}+1,n_Y]$}) if $\alpha\geq0$ (resp. $\alpha<0$), and
    \begin{flalign} \ \ \ \ \ \ \ \ \ \ \ \ \ \ \ \ \ \ \ \ \ \ \ \
    \sigma^{E_{4,3}}(y_{\frac{\alpha+1}{2}})=p_3\ ({\rm resp.\ } \sigma^{E_{4,3}}(y_{n_Y+\frac{\alpha+1}{2}})=p_3).&&&
\end{flalign}
\end{kst}
\item[5)] \vspace{2mm}
 Label $E_{4,4}$ with
   \begin{flalign*} \ \ \ \ \ \ \ \ \ \ \ \
    J_{4,4}\triangleq\left\{
   \begin{array}{ll}
  \{3i-2,3i-1|i\in[n_Y+\alpha+1,\theta]\},&     {\rm if}\    \alpha\geq0,  \vspace{1mm}  \\
   \{3i-2,3i-1|i\in[n_Y+1,\theta+\alpha]\},&    {\rm if}\    \alpha<0,  \vspace{1mm}  \\
   \end{array}
  \right.   &&&
\end{flalign*}
     where  the labels are paired such that each pair   has   the sum
      \begin{flalign} \ \ \ \ \ \ \ \ \ \ \ \ \label{eq00001}
  p_4\triangleq3(n_Y+\alpha+\theta),   &&&
\end{flalign}
  and is assigned to some $X$-link
   in $E_{4,4}$.

  \end{kst}

   \noindent \vspace{2mm} \textbf{Step 5.} Label $E_2$, according to the label assignment to $E_{4,2}$.

  \vspace{2mm} Let
    \begin{flalign} \label{I2} \ \ \ \ \ \ \ \ \ \ \ \
    O_{2}\triangleq\left\{
   \begin{array}{ll}
  \{3i|i\in[1,n^{\rm even}_Y]\},&   {\rm when}\    n^{\rm even}_Y\ {\rm is\ odd},\   \vspace{1mm}  \\
   \{3i|i\in[1,n^{\rm even}_Y-1]\cup\{\frac{3n^{\rm even}_Y}{2}\}\},&   {\rm when}\    n^{\rm even}_Y\ {\rm is\ even}.  \\
   \end{array}
  \right.   &&&
\end{flalign}

\begin{kst}
\item[1)]
When $n^{\rm even}_Y$ is odd, by (\ref{xin1}) and  (\ref{I2}),   label $E_{2}$ with the continuous  $0$-labels  in $O_2$,
  such that
   \begin{flalign} \ \ \ \ \ \ \ \ \ \ \ \  \label{eq012} \begin{split}\sigma^{E_{4,2}\cup E_{2}}(y)=p_2&\triangleq 6(n^{\rm odd}_Y+\max\{0,\alpha\})+\frac{9n^{\rm even}_Y}{2}+\frac{3}{2}
   \\
   &=\frac{3(3n_Y+n^{\rm odd}_Y+1)}{2}+6\max\{0,\alpha\}\end{split} &&& \end{flalign} for each $y$ in $Y_{{\rm even}}$.

 \item[2)]\vspace{2mm}  When  $n^{\rm even}_Y$ is even, by (\ref{xin2}) and  (\ref{I2}),   label $E_{2}$ with $O_2$ (containing $n_Y-1$  continuous 0-labels and the other one label $\frac{3n^{\rm even}_Y}{2}$),
  such that all vertices in $Y$ receive precisely  a common partial vertex sum from $E_{4,2}\cup E_{2}$. That is,
   \begin{flalign} \ \ \ \ \ \ \ \ \ \ \ \ \label{eq013}   \sigma^{E_{4,2}\cup E_{2}}(y)=p_2'\triangleq6(n^{\rm odd}_Y+\max\{0,\alpha\})+\frac{9n^{\rm even}_Y}{2}+3 =p_2+\frac{3}{2}, &&& \end{flalign} for each $y$ in $Y_{{\rm even}}$.
\end{kst}
 \vspace{2mm} \noindent\textbf{Step 6.} Label $E_{G_1}\setminus E_4$.

\vspace{2mm} Let \begin{flalign*} \ \ \ \ \ \ \ \ \ \ \ \ O_3\triangleq\{3i|i\in[n_Y+n^{\rm even}_Y+1,l_0-m_{2,0}]\}.  &&&
\end{flalign*}  Pair the labels in $O_3$ such that each pair has precisely the sum
 \begin{flalign} \ \ \ \ \ \ \ \ \ \ \ \ \label{eq018}
 3(n_Y+n^{\rm even}_Y+l_{0}-m_{2,0}+1)=3(n_Y+\alpha+\theta) = p_4,
 &&&\end{flalign} by (\ref{eq13}) and (\ref{eq00001}). Assign the pairs to   $X$-links in $E_{G_1}\setminus E_4$.

  \vspace{2mm}     Let $\sigma_1(y)$ be the partial vertex sum of  $v$  for each   $v\in V_G$ right after Step 6.
        \begin{claim} \label{claim5}Any pair of partial vertex sums in $Y\setminus Y_0$ from $E\setminus (E_1\cup M)$ are either equal or have a difference greater than
 $3n_Y$. \end{claim}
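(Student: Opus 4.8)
The plan is to obtain a closed form for $\sigma_1(y)$ on $Y\setminus Y_0$ and then reduce Claim~\ref{claim5} to a short list of inequalities among $p_1,p_2,p_2',p_3,p_4$. Every edge of $E_{G[X]}$, $E_{G_3}$, $E_{F_2}$, $E_{F_3}$ lies inside $X$, so for $y\in Y$ only $E_4$, $E_2$ and $E_{G_1}\setminus E_4$ contribute to $\sigma_1(y)$. Collecting the assignments of Steps~4.3, 5, 6: the unique $X$-link of $E_{4,1}$ at each $y\in Y_{{\rm odd}}\setminus Y_0$ carries sum $p_1$; $E_{4,2}\cup E_2$ carries the constant $p_2$ (or $p_2'$) at each $y\in Y_{{\rm even}}$; $E_{4,3}$ carries $\beta(y)p_3$, where $\beta(y):=\sigma^{E_{4,3}}(y)/p_3\in\{0,1,2\}$ is read off from Step~4.3(4) (and $\beta\equiv0$ if $\alpha=0$); and the remaining edges at $y$ — those of $E_{4,4}$ and of $E_{G_1}\setminus E_4$ — split into $X$-links of sum $p_4$. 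Since exactly two edges of $E_4(y)$ go to $E_{4,1}\cup E_{4,2}$ and $|E_4(y)|$ is even for $y\notin Y_0$, the number of those $p_4$-links is $\tfrac{d_{G_1}(y)}{2}-1-\beta(y)\ (\ge4)$. Hence for $y\in Y\setminus Y_0$,
\[
\sigma_1(y)=P(y)+\beta(y)\,p_3+\Bigl(\tfrac{d_{G_1}(y)}{2}-1-\beta(y)\Bigr)p_4 ,
\]
with $P(y)=p_1$ on $Y_{{\rm odd}}$ and $P(y)\in\{p_2,p_2'\}$ on $Y_{{\rm even}}$.

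Next I would collect the estimates, all consequences of Claim~\ref{claim4} ($|\alpha|<2n_Y$), (\ref{xeq8}), (\ref{yeq1}) ($\theta\ge\tfrac92 n_Y$), (\ref{xeq2}), the Step~4.2 relation $\alpha+\theta=n^{{\rm even}}_Y+l_{1,0}-\gamma+1$, and $n_Y\ge15$: namely $p_4>3n_Y$; $|p_4-p_3|>3n_Y$ with $p_4-p_3>0$ if $\alpha\ge0$ and $p_3-p_4>0$ if $\alpha<0$; $p_3>3n_Y$ when $\alpha<0$; $P(y)-p_1>3n_Y$ for $y\in Y_{{\rm even}}$ when $Y_{{\rm even}}\ne\emptyset$; and $p_4>3n_Y+(P(y)-p_1)$ for $y\in Y_{{\rm even}}$ when $\alpha\ge0$. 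I would also record the monotonicity driving the sign analysis: $i\mapsto d_{G_1}(y_i)=d_{G_0}(y_i)$ is non-decreasing, because $d_{G_0}(y)$ equals $d_G(y)-1$ on $Y_{{\rm odd}}$ and $d_G(y)-2$ on $Y_{{\rm even}}$ with all these values even; and $i\mapsto\beta(y_i)$ is non-increasing if $\alpha\ge0$, non-decreasing if $\alpha<0$, by the definition of $E_{4,3}$. In particular, if $y_i\in Y_{{\rm odd}}$, $y_j\in Y_{{\rm even}}$ with $i>j$, then $d_G(y_i)-d_G(y_j)$ is a positive odd integer, whence $\tfrac{d_{G_1}(y_i)-d_{G_1}(y_j)}{2}\ge1$.

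Now fix $y_i,y_j\in Y\setminus Y_0$ with $i>j$, put $b=\beta(y_i)-\beta(y_j)$ and $\delta=\tfrac{d_{G_1}(y_i)-d_{G_1}(y_j)}{2}\ge0$, so $\sigma_1(y_i)-\sigma_1(y_j)=(P(y_i)-P(y_j))+\delta p_4+b(p_3-p_4)$. If $y_i,y_j$ are in the same class, then $P(y_i)=P(y_j)$ and monotonicity makes $b(p_3-p_4)$ a nonnegative multiple of $|p_3-p_4|$, so the difference equals $\delta p_4$ plus a nonnegative multiple of $|p_3-p_4|$; it vanishes exactly when $\delta=b=0$, and otherwise exceeds $\min\{p_4,|p_3-p_4|\}>3n_Y$. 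If $y_i\in Y_{{\rm odd}}$, $y_j\in Y_{{\rm even}}$, then $\delta\ge1$ and, by monotonicity, $-b(p_4-p_3)\ge0$, so $\sigma_1(y_i)-\sigma_1(y_j)\ge p_4-(P(y_j)-p_1)>3n_Y$; the reverse assignment is symmetric after swapping $i,j$. The remaining cross-class case is $y_i\in Y_{{\rm odd}}$, $y_j\in Y_{{\rm even}}$ with $i<j$; there $\sigma_1(y_i)-\sigma_1(y_j)=(p_1-P(y_j))+\delta p_4-b(p_4-p_3)$, and monotonicity forces all three summands to be $\le0$ while $|p_1-P(y_j)|>3n_Y$. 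This exhausts all pairs, proving the claim.

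The main obstacle is the inequality $p_4>3n_Y+(P(y)-p_1)$ for $\alpha\ge0$, $y\in Y_{{\rm even}}$: here $P(y)-p_1$ carries the $6\max\{0,\alpha\}$ term hidden inside $p_2$, so it is of order $\Theta(n_Y)$ and might a priori absorb a lone copy of $p_4$. Proving it uses $\theta-\alpha=2\theta+\gamma-n^{{\rm even}}_Y-l_{1,0}-1$, the identity $m_{1,1}=(l_{1,1}+l_{1,2})+\gamma$, and $l_{1,1}+l_{1,2}-l_{1,0}\ge\tfrac{m_1}{3}\ge\tfrac{14}{3}n_Y$ from (\ref{xeq2}), which together with $2\theta\ge m_{1,1}-2$ give $3(\theta-\alpha)>8n_Y-9$ and hence $p_4-(P(y)-p_1)-3n_Y>\tfrac72 n_Y-11>0$ once $n_Y\ge15$. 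Everything else is routine bookkeeping with the displayed formula.
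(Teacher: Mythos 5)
Your proof is correct and follows essentially the same route as the paper: it isolates the block sums $p_1,p_2,p_2',p_3,p_4$, writes each $\sigma_1(y)$ as $P(y)+\beta(y)p_3+(\tfrac{d_{G_1}(y)}{2}-1-\beta(y))p_4$, and reduces the claim to the gap estimates $\min\{p_2-p_1,p_2'-p_1\}>3n_Y$, $\min\{p_4-p_2,p_4-p_2'\}>3n_Y$, $p_4>3n_Y$ and $|p_4-p_3|>3n_Y$, exactly as in the paper's Claim 3.4 (your "main obstacle" $p_4>3n_Y+(P(y)-p_1)$ is in fact immediate from the paper's bound $\min\{p_4-p_2,p_4-p_2'\}>4n_Y$ together with $p_1\geq 0$). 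The only difference is presentational: you make the closed form and the monotonicity of $d_{G_1}(y_i)$ and $\beta(y_i)$ explicit where the paper leaves them implicit in its four-case comparison.
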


   \begin{subproof}          By   (\ref{eq012}), (\ref{eq013}) and Claim \ref{claim4},
 \begin{flalign} \ \ \ \ \ \ \ \ \ \ \ \ \label{zeq1}
      \frac{3(3n_Y+n^{\rm odd}_Y+1)}{2}\leq  p_2<p_2'\leq \frac{3(3n_Y+n^{\rm odd}_Y+2)}{2}+6\max\{0,\alpha\}.
     &&&
\end{flalign} By   (\ref{eq018}) and (\ref{yeq1}),
 \begin{flalign} \ \ \ \ \ \ \ \ \ \ \ \ \label{zeq2}
      p_4=3(n_Y+\alpha+\theta)\geq 3(n_Y+\alpha+\frac{9n_Y}{2})=\frac{33 n_Y}{2}+3\alpha.
     &&&
\end{flalign}
Then
\begin{kst}
\item[1)]  By  (\ref{eq011}) and (\ref{zeq1}),
 \begin{flalign} \ \ \ \ \ \ \ \ \ \ \ \ \label{aeq1}
       \min\{p_2'-p_1, p_2-p_{1}\}
       \geq \frac{3(3n_Y+n^{\rm odd}_Y+1)}{2}-3n^{\rm odd}_Y>3n_Y.
     &&&
\end{flalign}
\item[2)] \vspace{2mm}   By (\ref{zeq1}), (\ref{zeq2}) and Claim \ref{claim4},
 \begin{flalign} \ \ \ \ \ \ \ \ \ \ \ \ \label{aeq2}
    \begin{split}   \min\{p_4-p_{2},p_4-p'_{2}\}&\geq
       (\frac{33n_Y}{2}+3\alpha)-[\frac{3(3n_Y+n^{\rm odd}_Y+2)}{2}+6\max\{0,\alpha\}]\\
       &\geq \frac{21n_Y}{2}-3|\alpha|-3\\
        &\geq \frac{21n_Y}{2}-6n_Y-3\\
         &> 4n_Y.
  \end{split}  &&&
\end{flalign}
\item[3)] \vspace{2mm}
When $\alpha\geq0$, by  (\ref{eq014}) and (\ref{zeq2}),
\begin{flalign*} \ \ \ \ \ \ \ \ \ \ \ \
       p_4-p_3\geq (\frac{33 n_Y}{2}+3\alpha)-(6n^{\rm odd}_Y+3\alpha)>10n_Y;&&&
\end{flalign*}
when $\alpha<0$, by  (\ref{eq014}), (\ref{eq00001}) and (\ref{yeq1}),
\begin{flalign*} \ \ \ \ \ \ \ \ \ \ \ \
       p_3-p_4=(6t+3\alpha)-3(n_Y+\alpha+\theta)=3\theta-3n_3\geq \frac{27n_Y}{2}-3n_Y>10n_Y.&&&
\end{flalign*}
So we always have
\begin{flalign} \ \ \ \ \ \ \ \ \ \ \ \ \label{aeq3}
       |p_3-p_4|>10n_Y.&&&
\end{flalign}
\end{kst}

For $y_i,y_j$ in $Y\setminus Y_0$ with $i<j$,
\begin{kst}
   \item[1)]  if  $d(y_{j})=d(y_i)$  where  $|E_{4,3}(y_i)|=|E_{4,3}(y_j)|$, then  \begin{flalign*} \ \ \ \ \ \ \ \ \ \ \ \ \sigma_1(y_{j})=\sigma_1(y_{i});&&&
\end{flalign*}

   \item[2)] \vspace{2mm}  if  $d(y_{j})=d(y_i)$  where $|E_{4,3}(y_i)|-|E_{4,3}(y_j)|=4$ or $-4$, then  by (\ref{aeq3}),   \begin{flalign*} \ \ \ \ \ \ \ \ \ \ \ \ |\sigma_1(y_{j})-\sigma_1(y_{i})|=|2(p_4-p_3)|> 20n_Y;&&&
\end{flalign*}
    \item[3)] \vspace{2mm}  if  $d(y_{j})=d(y_i)$  where $|E_{4,3}(y_i)|-|E_{4,3}(y_j)|=2$ or $-2$,    then by (\ref{aeq3}), \begin{flalign*} \ \ \ \ \ \ \ \ \ \ \ |\sigma_1(y_{j})-\sigma_1(y_{i})|=|p_4-p_3|> 10n_Y;&&&
\end{flalign*}
   \item[4)] \vspace{2mm}   if  $d(y_{j})>d(y_i)$, then by (\ref{zeq2}), (\ref{aeq1}) and (\ref{aeq2}),  \begin{flalign*} \ \ \ \ \ \ \ \ \ \ \
       \sigma_1(y_{j})-\sigma_1(y_{i})&\geq \min\{p_2-p_{1},p'_2-p_{1},p_4-p_{2},p_4-p'_{2},p_4\}>3n_Y.
       &&&
\end{flalign*}
\end{kst}
 That is, any pair of partial vertex sums in $Y\setminus Y_0$ are either equal or have a difference greater than
 $3n_Y$.

 This completes the proof of Claim \ref{claim5}. \end{subproof}

 \vspace{2mm} By Claim \ref{claim5},  the final vertex sums in $Y\setminus Y_0$ will be pairwise distinct 0-sums, since  we would finally label $M\cup E_1$ with the rest $n_Y$ continuous 0-sums among
 \begin{flalign*} \ \ \ \ \ \ \ \ \ \ \ \
    O_{4}\triangleq\left\{
   \begin{array}{ll}
  \{3i|i\in[n^{\rm even}_Y+1,n^{\rm even}_Y+n_Y]\},&   {\rm when}\    n^{\rm even}_Y\ {\rm is\ odd},\   \vspace{1mm}  \\
   \{3i|i\in[n^{\rm even}_Y,n^{\rm even}_Y+n_Y]\setminus\{\frac{3n^{\rm even}_Y}{2}\}\},&   {\rm when}\    n^{\rm even}_Y\ {\rm is\ even}.  \\
   \end{array}
  \right.   &&&
\end{flalign*}

 \vspace{2mm}\noindent\textbf{Step 7.}  Label $E_1$.

\vspace{2mm} Let $O_{4,1}\subseteq O_4$    be the subset containing     the $(n_Y-n_X)$ least  labels in $O_4$.  Label $E_1$ with  $O_{4,1}$
  arbitrarily.

   In the case when  $Y_0\neq \emptyset$,     if $y'$ is covered by $M$ (let $y'x'\in M$), we ensure that the partial vertex sums of  $y'$ and $x'$ are distinct.

   If partial vertex sums of  $y'$ and $x'$ are already  distinct, no further action is required.

   If   they are equal,  we perform a switching operation to distinguish their partial sums. This adjustment preserves two key properties:   the modulo three value of every partial vertex sum in $X\cup \{y'\}$ remain unchanged; all partial vertex sums in   $Y\setminus \{y'\}$ stay the same.

  \begin{kst}
 \item[(i)]
  If $x'$ is in $X\setminus I_2$, then there exists an edge $x'{{{y''}}}$  in $E_4$. Without loss of generality, suppose $x'{{{y''}}}$ is assigned with  a $\mu$-label for some  $\mu\in\{1,2\}$. Note that, by   Lemma \ref{lem6} (ii) and   the fact that   {$|E_4(y'')|\geq 8$},
  there exists at least one  edge  $x''{y''}$ in $E_4$ other than $x'{y''}$  that is also labeled with a $\mu$-label. Swapping the labels on $x'y'', x''y''$ switches   the partial vertex sum  of each of $x',x''$ to a distinct new value (while preserving the sum modulo three), and leaves all other partial vertex sums unchanged. This achieves the desired switching. See Figure \ref{figxin1}.

    \begin{figure}[htbp]
 \centering
 %\psfrag{a}{(a)}
 %\psfrag{b}{(b)}
 \includegraphics[height=3.6cm]{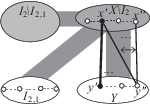}
\caption{\label{figxin1} If $x'$ is in $X\setminus I_2$,  swapping the labels on $x'y'', x''y''$ switches   the partial vertex sum  of each of $x',x''$ (while preserving the    modulo three values), and leaves all other partial vertex sums unchanged. }
\end{figure}

   \item[(ii)] \vspace{2mm}  If  $x'$ is in $I_2$, then   $x'$ is covered by $E_{F_2}\cup E_{F_3}$. Suppose
 $x'x''\in E_{F_2}\cup E_{F_3}$. Note that each edge in $E_{F_2}\cup E_{F_3}$ is labeled with a $\{1,2\}$-label.  Suppose $x'x''$ is    labeled with a $\mu$-label for some $\mu=1,2$.  Recall that there is at least one edge  $x'''{y'}$ in $E_4$ with $x'''\neq x',x''$,    that is labeled with a $\mu$-label  by  Lemma \ref{lem6} (ii) and   the fact that   {$|E_4(y'')|\geq 8$}.  Swapping the labels on edges $x'x''$ and $x'''y'$ increases the partial sums of $x'$ and $x''$ while decreasing those of $x'''$ and $y'$ when $f(x'x'') < f(x'''y')$, and conversely decreases the partial sums of $x'$ and $x''$ while increasing those of $x'''$ and $y'$ when $f(x'x'') > f(x'''y')$, all while preserving the modulo three values and leaving all other vertex partial sums unchanged, thus achieving the desired switching. See Figure \ref{figxin2}.
 \begin{figure}[htbp]
 \centering
 %\psfrag{a}{(a)}
 %\psfrag{b}{(b)}
 \includegraphics[height=3.6cm]{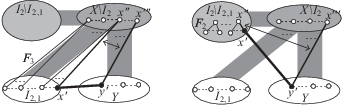}
\caption{\label{figxin2}  If  $x'$ is in $I_2$,  swapping the labels on edges $x'x''$ and $x'''y'$   distinguishes  the  partial sums of $x'$ and $y'$,     preserves the   modulo three values of partial vertex sums in $X\cup \{y'\}$, and leaves all partial vertex sums in $Y\setminus\{y'\}$ unchanged. }
\end{figure}
  \end{kst}

 \vspace{2mm}\noindent\textbf{Step 8.}  Label $M$.

 \vspace{2mm} Let $\sigma_2(v)$ be the partial vertex sum of $v$ for each $v\in V_G$ right after Step 7. Suppose $X=\{x_{i}|i\in[1,n_X]\}$ where  $\sigma_2(x_{i})\leq \sigma_2(x_{{i+1}})$ for $i\in[1,n_X-1]$. Let   $M(x_i)=\{e_i\}$ for $i\in[1,n_X]$.   Define $O_{4,2}=O_4\setminus O_{4,1}$.

  Label $M$ with   $O_{4,2}$     such that $e_i$ receives a label smaller than  $e_{i+1}$  for $i\in[1,n_X-1]$. Then the final vertex sums in $X$ are pairwise different.

  \vspace{2mm}\noindent\textbf{Step 9.}  Do some   switching when necessary.

   \vspace{2mm}  Let $\sigma(v)$ be the final vertex sum of $v$ under  $f$. Now  $\sigma(X)$ consists of pairwise distinct $\{1,2\}$-sums and  $\sigma(Y\setminus Y_0)$ consists of pairwise distinct  0-sums. If $Y_0=\emptyset$, or $Y_0\neq\emptyset$ and $\sigma(y')\notin \sigma(X)$,   then we have already obtained an antimagic labeling.

  In the following, suppose $Y_0\neq\emptyset$  and $\sigma(y')=\sigma(x_{b})$ where $\sigma(x_{b})$ is a $\mu$-label ($\mu=1,2$) for some $x_{b}$ in $X$.  Note that there exist at least one 1-sum and one 2-sum  in $(X\setminus I_2)\cup X_{2,1}$ from Step 4.1. Consider a vertex    $x_{\xi}$    in $X$  with  a $(3-\mu)$-sum, selected such that $|b-\xi|$ is minimized.  Without loss of generality, suppose $b<\xi$.

   Note that  $y'x_{b}\notin M$   from Step 7.

  \vspace{2mm}\noindent\textbf{Case 1.} If $y'x_{i}\notin M$ for any   $i\in[b+1,\xi]$.

    \vspace{2mm} In this case,   relabel $e_{i}$ with  $f(e_{{i+1}})$ for each $i\in[b,\xi-1]$ and relabel $e_\xi$ with $f(e_b)$. Let $f^\ast$ be the    new labeling.  We conclude that  \begin{itemize}
    \item For all vertices $x \in X \setminus \{x_i \mid i \in [b,\xi]\}$, $\sigma_{f^\ast}(x) = \sigma(x)$.

    \item For each $i \in [b,\xi-1]$, $\sigma_{f^\ast}(x_i) = \sigma(x_i) + 3$.

    \item The vertex $x_\xi$ satisfies $\sigma_{f^\ast}(x_\xi) = \sigma(x_\xi) - 3(\xi-b)$.

    \item The sum at $y'$ remains unchanged: $\sigma_{f^\ast}(y') = \sigma(y')$.

    \item The ordering $\sigma_{f^\ast}(x_{b-1}) < \sigma_{f^\ast}(y') < \sigma_{f^\ast}(x_i) < \sigma_{f^\ast}(x_{\xi+1})$ holds for all $i \in [b,\xi]$.

    \item The vertex $x_\xi$ is uniquely determined in $\{x_i \mid i \in [b+1,\xi]\}$ by having a $(3-\mu)$-sum.
\end{itemize} So the vertex sums in $X \cup \{y'\}$ are pairwise distinct $\{1,2\}$-sums, and those in $Y$ remain pairwise distinct $0$-sums by Claim~\ref{claim5}. Therefore, $f^\ast$ is an antimagic labeling. See Figure \ref{nx1}.
\begin{figure}[htbp]
 \centering
 %\psfrag{a}{(a)}
 %\psfrag{b}{(b)}
 \includegraphics[height=3cm]{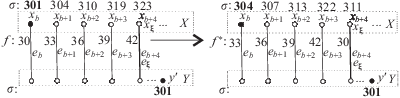}
\caption{\label{nx1} An example for the switching of the labeling in Case 1 (when  $y'x_{i}\notin M$ for any   $i\in[b+1,\xi]$).}
\end{figure}

  \vspace{2mm} \noindent\textbf{Case 2.} If  $y'x_{\lambda}\in M$ for   some  $\lambda\in[b+1,\xi-1]$.

 \vspace{2mm} In this case, $b\leq\xi-2$ since $b<\lambda$.   Relabel $e_i$ with  $f(e_{i+1})$ for each $i\in[b+1,\xi-1]$ and relabel $e_\xi$ with $f(e_{b+1})$. Let $f^\ast$ be the    new labeling. It follows that  \begin{itemize}
\item For all $x \in X \setminus \{x_i \mid i \in [b+1,\xi]\}$, $\sigma_{f^*}(x) = \sigma(x)$ holds.
\item Each $x_i$ with $i \in [b+1,\xi-1]$ satisfies $\sigma_{f^*}(x_i) = \sigma(x_i) + 3$.
\item The vertex $x_\xi$ attains $\sigma_{f^*}(x_\xi) = \sigma(x_\xi) - 3(\xi-b+1)$.
\item The modified sum $\sigma_{f^*}(y') = \sigma(y') + 3$ yields both $\sigma_{f^*}(y') < \sigma(y') + 6 \leq \sigma_{f^*}(x_{b+1})$ and $\sigma_{f^*}(y') > \sigma(y') = \sigma_{f^*}(x_b)$.
\item The ordering $\sigma_{f^*}(x_b) < \sigma_{f^*}(v) < \sigma_{f^*}(x_{\xi+1})$ holds for all $v \in \{y'\} \cup \{x_i \mid i \in [b+1,\xi]\}$.
\item The vertex $x_\xi$ is uniquely determined in $\{x_i \mid i \in [b+1,\xi]\}$ by possessing a $(3-\mu)$-sum.
\end{itemize} So the vertex sums in $X \cup \{y'\}$ are pairwise distinct $\{1,2\}$-sums, and those in $Y$ remain pairwise distinct $0$-sums, proving $f^\ast$ is antimagic. See Figure \ref{nx2}.
\begin{figure}[htbp]
 \centering
 %\psfrag{a}{(a)}
 %\psfrag{b}{(b)}
 \includegraphics[height=3cm]{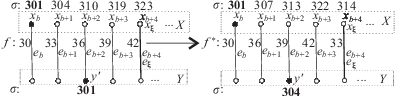}
\caption{\label{nx2} An example for the   switching of the labeling in Case 2  when  $y'x_{\lambda}\in M$ for   some  $\lambda\in[b+1,\xi-1]$.}
\end{figure}

  \vspace{2mm}\noindent \textbf{Case 3.} If    $y'x_{\xi}\in M$.

   \vspace{2mm}\noindent \textbf{Case 3.1.} If     there exists an integer   $\eta\in[1,b-1]$ such that $\sigma(x_{\eta})$ is  a $(3-\mu)$-sum.

   \vspace{2mm}  Choose $\eta$ such that $\eta$ is maximum. We relabel $e_i$ with  $f(e_{i-1})$ for each $i\in[\eta+1,b]$ and relabel $e_\eta$ with $f(e_{b})$. Let $f^\ast$ be the    new labeling.  It follows that \begin{itemize}
\item $\sigma_{f^*}(x) = \sigma(x)$ holds for all $x \in X \setminus \{x_i \mid i \in [\eta,\xi]\}$.
\item For each $i \in [\eta+1,b]$, $\sigma_{f^*}(x_i) = \sigma(x_i) - 3$.
\item The vertex $x_\eta$ satisfies $\sigma_{f^*}(x_\eta) = \sigma(x_\eta) + 3(b-\eta)$.
\item The sums at $y'$ and $x_b$ are equal: $\sigma_{f^*}(y') = \sigma(y') = \sigma(x_b)$.
\item The ordering $\sigma_{f^*}(x_{\eta-1}) < \sigma_{f^*}(x_i) < \sigma_{f^*}(y') < \sigma_{f^*}(x_{b+1})$ holds for all $i \in [\eta,b]$.
\item The vertex $x_\eta$ is uniquely determined in $\{x_i \mid i \in [\eta,b]\}$ by having a $(3-\mu)$-sum.
\end{itemize}Consequently, the vertex sums in $X \cup \{y'\}$ are pairwise distinct $\{1,2\}$-sums and those in $Y$ remain  pairwise distinct $0$-sums, making $f^\ast$ an antimagic labeling. See Figure \ref{nx3}.
\begin{figure}[htbp]
 \centering
 %\psfrag{a}{(a)}
 %\psfrag{b}{(b)}
 \includegraphics[height=3cm]{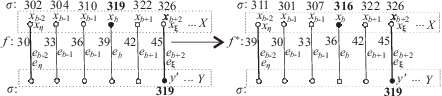}
\caption{\label{nx3} An example for the   switching of the labeling in Case 3.1 when  $y'x_{\xi}\in M$ and  there exists an integer   $\eta\in[1,b-1]$ such that $\sigma(x_{\eta})$ is  a $(3-\mu)$-sum.}
\end{figure}

   \vspace{2mm}\noindent \textbf{Case 3.2.}  If $\sigma(x_{j})$ is a $\mu$-sum for each $j \in [1, b]$.

    \vspace{2mm}   Since  $\sigma(y') = \sigma(x_{b})$ and $f(e_\xi) > f(e_b)$, we have $\sigma_2(y') < \sigma_2(x_{b})$. Let $\beta$ be the smallest integer not exceeding $b$ such that $\sigma_2(y') \leq \sigma_2(x_{\beta})$. We then relabel the edges $e_1, e_2, \ldots, e_{\beta-1}$, $e_{\xi}$, $e_{\beta}$, $e_{\beta+1}$, $\ldots, e_{\xi-1}$ in increasing order with $\{f(e_j) \mid j \in [1, \xi]\}$.
This relabeling ensures:
\begin{itemize}
\item The vertex sums in $\{x_i \mid i \in [1,\xi-1]\} \cup \{y'\}$ form pairwise distinct $\mu$-sums, all strictly smaller than $X$'s remaining $\mu$-sums; these other $\mu$-sums stay unchanged.

\item For vertex $x_\xi$, its sum becomes smaller than every other $(3-\mu)$-sum, while these $(3-\mu)$-sums maintain their original values.

\item In set $Y$, all vertex sums keep their pairwise distinct $0$-sum property.
\end{itemize} Thus, the resulting labeling is antimagic. See Figure \ref{nx4}.
\begin{figure}[htbp]
 \centering
 %\psfrag{a}{(a)}
 %\psfrag{b}{(b)}
 \includegraphics[height=3.6cm]{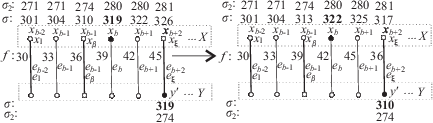}
\caption{\label{nx4} An example for the   switching of the labeling in Case 3.2 when  $y'x_{\xi}\in M$ and $\sigma(x_{j})$ is a $\mu$-sum for each $j \in [1, b]$. }
\end{figure}

  This completes the proof of Theorem \ref{thm1}.}
\end{proof}

\noindent\textbf{Funding and  Conflicts of interests}

The   author is supported by NSFC (No. 11701195), Fundamental Research Funds for the Central
Universities (No. ZQN-904) and Fujian Province University Key Laboratory of Computational Science,
School of Mathematical Sciences, Huaqiao University, Quanzhou, China (No. 2019H0015).

We wish to confirm that there are no known conflicts of interest associated with this
publication and there has been no significant financial support for this work that could have influenced its
outcome.

\end{document}